\documentclass[11pt]{amsart}
\usepackage[utf8]{inputenc}

\usepackage{amsmath,amssymb,amsfonts,amsthm}
\usepackage{graphicx}
\usepackage[colorlinks=true]{hyperref}
\usepackage{epsfig, enumerate}
\usepackage{tikz}
\usetikzlibrary{arrows.meta,decorations.pathreplacing,positioning}

\numberwithin{equation}{section}
\newcommand{\R}{\mathbb{R}}
\newcommand{\N}{\mathbb{N}}
\newcommand{\Z}{\mathbb{Z}}

\newcommand{\W}{\mathcal{W}}
\newcommand{\CZ}{\mathcal{CZ}}

\newcommand{\beqnn}{\begin{eqnarray*}}
\newcommand{\eeqnn}{\end{eqnarray*}}
\newcommand{\beqn}{\begin{eqnarray}}
\newcommand{\eeqn}{\end{eqnarray}}
\newcommand{\beq}{\begin{equation}}
\newcommand{\eeq}{\end{equation}}

\theoremstyle{plain}
\newtheorem{thm}{Theorem}[section]
\newtheorem{prop}[thm]{Proposition}

\newtheorem{lem}[thm]{Lemma}
\newtheorem{cor}[thm]{Corollary}
\newtheorem{rmk}[thm]{Remark}
\newtheorem{defi}{Definition}[section]

\begin{document}

\title{Local centralizer rigidity for twisted Weyl chamber flows}
\author{Zhijing Wendy Wang}
\address{Department of Mathematics, University of Chicago, Chicago, IL 60637, USA}
\email{zhijingw@uchicago.edu}

\begin{abstract}
 We prove local centralizer rigidity results for elements of Weyl chamber
flows and twisted Weyl chamber flows on compact homogeneous spaces.  For
generic elements in the twisted setting, sufficiently large dimension of the centralizer
forces smooth conjugacy to an algebraic model.  For many non-generic
elements, we prove analogous rigidity under a virtual centralizer-isomorphism
hypothesis.
\end{abstract}

\maketitle

\tableofcontents

\section{Introduction}

A basic problem in smooth dynamics is to understand the symmetries of a
diffeomorphism.  For a diffeomorphism \(f\) of a compact manifold \(X\),
these symmetries are encoded by the smooth centralizer
\[
\mathcal Z(f)=\{g\in\mathrm{Diff}^\infty(X): gf=fg\}.
\]
Smale conjectured that a generic diffeomorphism has trivial centralizer,
i.e. only its iterates commute with it \cite{smale}.  Thus a large
centralizer is a non-generic phenomenon, and one expects it to reflect some
underlying algebraic structure.

The centralizer-rigidity problem asks whether this expectation is stable
under perturbation: if an algebraic diffeomorphism has a large centralizer,
and a small perturbation still has a centralizer of comparable size or
structure, must the perturbation be smoothly conjugate to an algebraic
model?  This question is especially natural for partially hyperbolic
homogeneous systems, where the algebraic model already contains higher-rank
hyperbolic structure.

By an algebraic model, we mean an affine diffeomorphism $f_0 = L_a \circ \Psi$ on a homogeneous space $X=G/\Gamma$, which is a composition of a left translation $L_a$ and an automorphism $\Psi$ of \(G\) preserving \(\Gamma\), where $G$ is a Lie group, $\Gamma<G$ is a cocompact lattice. In a recent paper, Damjanović, Wilkinson, Wu, and Xu \cite{DWWX} proved that if $f_0$ is not a $K$-system, then there exist $C^1$-perturbations of $f_0$, whose centralizer is ``extremely large" (contains a copy of $C^\infty_c(0,1)$ or $\mathrm{Diff}^\infty_c(0,1)$). So the natural framework is to only consider an affine $K$-system $f_0$. In the same paper, they also presented a series of conjectures. Namely, if $f_0$ is an affine $K$-system, and if $f\in \mathrm{Diff}^\infty(X)$ is a sufficiently small $C^1$-perturbation of $f_0$, they conjectured \begin{enumerate}
    \item the centralizer of $f$ is a Lie group;
    
 \item if $\mathcal Z^\infty(f_0)$ has no rank-$1$ factor, then either $\mathcal Z^\infty(f)$ has a rank-$1$ factor, or $f$ is $C^\infty$ conjugate to an affine $K$-system on $X$; 
 
     \item if $\mathcal Z^\infty(f_0)$ has no rank-$1$ factor, and $\mathcal Z^\infty(f_0)\doteq \mathcal Z^\infty(f)$ as Lie groups, then $f$ is $C^\infty$ conjugate to an affine $K$-system on $X$.    
\end{enumerate}

Here, we say a group action \emph{has a rank-$1$ factor}, if it has a smooth factor which is a compact extension of a $\Z$- or $\R$-action. 

Witte \cite[Proposition 4.22]{WitteZeroEntropy} showed that for a weakly mixing affine diffeomorphism, the radical of the Lie group is nilpotent. Together with the Levi decomposition, this suggests that the natural
ambient groups for affine K-systems (that are weakly mixing) are semidirect products
\(G_0\ltimes N\), where \(G_0\) is semisimple and \(N\) is nilpotent.

In this work, we consider elements of the Weyl chamber flow acting on a semisimple homogeneous space; and elements of the twisted Weyl chamber flow acting on the homogeneous space associated with the semidirect product $G=G_0\ltimes \R^N$. Theorem~\ref{czlieg} offers an affirmative answer to Conjecture 1 of \cite{DWWX} for non-trivial elements of the (twisted) Weyl chamber flow. Furthermore, Theorems~\ref{cenrigtwgen} and  \ref{cenrignongen:twisted} collectively address Conjecture 3 of \cite{DWWX} for various non-trivial, potentially non-generic, elements of the (twisted) Weyl chamber flow.

This provides a first example confirming these conjectures for some affine $K$-systems of semidirect products, extending the centralizer rigidity results beyond nilmanifolds or semisimple homogeneous spaces. 

There is also a second viewpoint.  In the algebraic model, the large
centralizer of a single element comes from the ambient higher-rank action.  Classical local rigidity
of higher-rank algebraic actions assumes that the whole action is perturbed.
In the setting of this paper, only one element is assumed to be close to the
algebraic model. Thus the results may also be viewed as a
semi-local form of Zimmer-type rigidity. 

\subsection{Setting and main results}

We consider the following settings (see Section \ref{semisimple} for more detail):

\begin{enumerate}
    \item \textbf{semisimple homogeneous space:} Let \(X = G_0 / \Gamma_0\), where \(G_0\) is a real-split semisimple Lie group with finite center, and each simple component has rank at least two. Let \(\Gamma_0\) be a cocompact lattice in \(G_0\). The Weyl chamber flow in \(X\) is given by left multiplication by a split Cartan subgroup of \(G_0\). Since $G_0$ is real-split, the adjoint action of the Weyl chamber flow induces a splitting of the Lie algebra of $G_0$. 
    
    We take \(f_0\) to be an element of this flow, defined by left multiplication by a non-trivial element of the split Cartan subgroup (meaning that in each simple component, the eigenvalues of \(\operatorname{Ad}(f_0)\) do not all have modulus one). 

    A basic example is \(G_0 = \mathrm{SL}_n(\mathbb{R})\), with \(f_0\) being a non-trivial diagonal matrix acting by left multiplication on \(G_0/\Gamma_0\). An explicit example is \(G_0 = \mathrm{SL}_4(\mathbb{R})\), where a Weyl chamber flow is the left multiplication of $\mathrm{diag}(e^{t_1}, e^{t_2}, e^{t_3}, e^{t_4}): \sum_{i=1}^4 t_i=0,$ with roots $t_i-t_j, i\neq j$. We may take any non-trivial element, e.g. \(f_0 = \mathrm{diag}(e^{2}, e^{2}, e^{-1}, e^{-3})\) acting on \(\mathrm{SL}_4(\mathbb{R})/\Gamma_0\).

    \item \textbf{Twisted semisimple homogeneous space:} Let \(X = (G_0 \ltimes_\rho \mathbb{R}^N) / (\Gamma_0 \ltimes_\rho \mathbb{Z}^N)\), where \(G_0\) is a real-split semisimple Lie group as in item 1, \(\rho: G_0 \to \mathrm{SL}_N(\mathbb{R})\) is a representation with no trivial component, and \(\Gamma_0\) is a cocompact lattice in \(G_0\) such that \(\Gamma_0=\rho^{-1}(\mathrm{SL}_N(\mathbb{Z})\cap \rho(G_0))\). The twisted Weyl chamber flow on \(X\) is the action induced by left multiplication by elements of \(A \ltimes \{0\}\) on \(G_0 \ltimes_\rho \mathbb{R}^N\), where \(A=\exp \mathfrak h\) is a split Cartan subgroup of \(G_0\). The adjoint action on the Lie algebra level induces a splitting in the Lie algebra $\mathfrak{g}(G_0\ltimes \R^N)=\mathfrak{g}(G_0)\times \R^N$. The corresponding eigenvalue functionals for the adjoint action of $\mathrm{ad}|_{\mathfrak h}$ on
\(\mathfrak g_0\) are the roots, while the eigenvalue functionals for the
derived representation \(d\rho|_{\mathfrak h}\) on \(\mathbb R^N\) are the weights. For example, if $G=\mathrm {SL}_4(\R)\ltimes_\rho \R^N$, where $\rho$ is a direct sum of standard representations, then a Weyl chamber flow is given by $\mathrm{diag}(e^{t_1}, e^{t_2}, e^{t_3}, e^{t_4})\ltimes \{0\}: \sum_{i=1}^4 t_i=0,$ the roots are $t_i-t_j, i\neq j$ and the weights are $t_i, 1\le i\le 4$.
    
    We take the affine diffeomorphism \(f_0\) to be left multiplication by \((\exp(a), 0) \in G_0 \ltimes_\rho \mathbb{R}^N\), where \(a\neq 0\in \mathfrak h\) is a nontrivial element of a split Cartan subalgebra. With some abuse of notation, we denote by $\nu(f_0)=\nu(a)$ for a root or weight $\nu:\mathfrak h\to \R$. 

    The representation $\rho$ and cocompact lattice $\Gamma_0$ satisfying the condition \(\Gamma_0=\rho^{-1}(\mathrm{SL}_N(\mathbb{Z})\cap \rho(G_0))\) exist in abundance via standard arithmetic constructions. An explicit example will be provided in Section \ref{extwcf}.

    We say that a twisted semisimple homogeneous space \(X\) \emph{has a symplectic component} if there exists a non-zero skew-symmetric bilinear form \(\omega: \mathbb{R}^N \times \mathbb{R}^N \to \mathbb{R}\) that is invariant under the action, i.e., \(\omega(v_1, v_2) = \omega(\rho(g)v_1, \rho(g)v_2)\) for all \(g \in G_0\) and \(v_1, v_2 \in \mathbb{R}^N\).
\end{enumerate}

To unify notation, we denote \(G = G_0\), \(\Gamma = \Gamma_0\) in the semisimple homogeneous space case, and \(G = G_0 \ltimes_\rho \mathbb{R}^N\), \(\Gamma = \Gamma_0 \ltimes_\rho \mathbb{Z}^N\) in the twisted semisimple homogeneous space case.

By Theorem A of \cite{DWWX}, in both cases, since $f_0$ is a $K$-system, the smooth centralizer of \(f_0\) coincides with its algebraic centralizer consisting of products of left translations and automorphisms of $G$. 

Let $f_0$ be an element of the (twisted) Weyl chamber flow, and let $f$ be a $C^1$-small perturbation of $f_0$, then Theorem 3.1 of \cite{Wang} shows that the centralizer of $f$ is a Lie group, and the action $\mathcal{Z}(f)\times X\to X$ is jointly smooth. (See Theorem \ref{czlieg}.)

We define \(f_0\) to be a \emph{generic} element of the (twisted) Weyl chamber flow if the corresponding element in the Lie algebra avoids the Weyl chamber walls described in Section \ref{semisimple}. In other words, an element of the Weyl chamber flow is generic if all its roots are non-zero, and an element of the twisted Weyl chamber flow is generic if all its roots and weights are non-zero.

For \emph{generic} elements of the Weyl chamber flow on simple Lie groups, Wang \cite{Wang} showed that the centralizer of a perturbation \(f\) is a Lie group, and \(f\) is smoothly conjugate to an element of the Weyl chamber flow when the centralizer is of dimension at least two. Here, we establish an analogous rigidity result for generic elements in the twisted setting.

\begin{thm}\label{cenrigtwgen}
    Let \(X = (G_0 \ltimes_\rho \mathbb{R}^N) / (\Gamma_0 \ltimes_\rho \mathbb{Z}^N)\) be a twisted semisimple homogeneous space, where \(G_0\) is a real-split semisimple Lie group of rank $k$ with simple components of ranks \(k_i\). Suppose \(f_0\) is a generic element of the twisted Weyl chamber flow on \(X\). Let \(f\in \mathrm{Diff}^\infty(X)\) be a sufficiently small \(C^1\)-perturbation of \(f_0\). If \(\dim \mathcal Z(f)>k-\min_i k_i+1\), then \(\mathcal Z(f)\simeq \mathcal Z(f_0)\) and \(f\) is smoothly conjugate to an element of the twisted Weyl chamber flow.
\end{thm}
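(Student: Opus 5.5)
Since \(f\) is a \(C^1\)-small perturbation of \(f_0\), Theorem~\ref{czlieg} (Theorem 3.1 of \cite{Wang}) shows that \(\mathcal Z(f)\) is a Lie group acting smoothly on \(X\). The plan is to extract from it a higher-rank abelian subgroup that looks like a large subgroup of the twisted Weyl chamber flow. We then apply local rigidity of higher-rank partially hyperbolic (twisted) algebraic actions to conjugate that subgroup, and hence \(f\), to the algebraic model. Finally we identify the full centralizer.

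\textbf{Step 1 (an abelian subgroup containing \(f\)).} Let \(\mathfrak z\) be the Lie algebra of \(\mathcal Z(f)\); \(f\) acts on it by conjugation. Because \(f\) is close to \(f_0\), normal hyperbolicity of the \(f_0\)-orbit structure gives \(f\) invariant foliations near the algebraic ones: a center foliation tangent to \(\mathfrak h\oplus\) the zero root spaces. Genericity of \(f_0\) means all roots and weights are nonzero on \(a\), so the center is exactly the \(A\)-orbit direction of dimension \(k\). Every centralizer element preserves this dynamical splitting, and so does every element of the connected group \(\mathcal Z(f)^0\).

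\textbf{Step 2 (structure of \(\mathcal Z(f)^0\)).} I would argue that \(\mathcal Z(f)^0\) is abelian, or has an abelian normal subgroup of the appropriate dimension, and that its elements are close to elements of \(\mathcal Z(f_0)\). In \(\mathcal Z(f_0)\) the identity component is \(A\) (genericity kills all unipotent directions), possibly times finite pieces. By the semicontinuity of centralizer dimension argued in \cite{Wang}, \(\dim\mathcal Z(f)\le k\).

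\textbf{Step 3 (the dimension hypothesis forces a genuinely higher-rank subaction).} The bound \(\dim\mathcal Z(f)>k-\min_i k_i+1\) guarantees that the image of \(\mathcal Z(f)^0\) in \(\mathfrak h\) is not contained in the span of at most \(k-k_i\) directions plus one line. Therefore, projecting to each simple factor \(G_i\), it contains at least a rank-two subgroup of the Cartan \(A_i\); equivalently, the subgroup is not contained in any subspace with a rank-one factor. The approximate model of this subgroup is a subgroup \(B<A\) whose restriction to every simple factor is of rank \(\ge2\). It remains to check that \(B\) contains enough elements that are regular with respect to weights as well. This is where the twisted case enters: we need no weight to be identically zero on \(B\) and no pair of roots/weights to be proportional on \(B\) in a way that destroys accessibility on the torus fibers. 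I would handle this by a counting argument on the \(\mathbb R^N\)-fibers using that \(\rho\) has no trivial component.

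\textbf{Step 4 (rigidity and identification).} The resulting \(\mathbb R^{\ell}\)-action of \(\mathcal Z(f)^0\) is a \(C^1\)-small perturbation of the restriction to \(B\) of the twisted Weyl chamber flow. I would apply local rigidity of higher-rank partially hyperbolic algebraic actions on \(G/\Gamma\): the KAM/geometric approach of Damjanović–Katok in the semisimple case, and its twisted analogue via the fibration over \(G_0/\Gamma_0\) with torus fibers. This yields a smooth conjugacy \(h\), up to an automorphism of the acting group, between \(\mathcal Z(f)^0\) and \(B\). Then \(hfh^{-1}\) commutes with \(B\), which is higher rank in each factor, so by Theorem A of \cite{DWWX} it is affine. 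Being close to \(f_0\) and commuting with \(B\), it lies in the twisted Weyl chamber flow up to a compact centralizing piece, which is trivial in the split setting. Conjugating back gives \(\mathcal Z(f)\simeq\mathcal Z(f_0)\).

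The main obstacle is Step 4's local rigidity input for a \emph{subgroup} \(B\) rather than all of \(A\), in the twisted setting. The cohomological vanishing and the accessibility arguments must be verified for \(B\) acting on both root and weight directions. Here the threshold \(k-\min k_i+1\) should be exactly what guarantees that the necessary higher-rank trapping holds.
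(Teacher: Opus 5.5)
\textbf{The gap is in Step 4.} There you assert that the $\mathcal Z(f)^0$-action is a $C^1$-small perturbation of the restriction of the twisted Weyl chamber flow to $B$, and then quote local rigidity. Nothing in the hypotheses gives this: only $f$ is assumed $C^1$-close to $f_0$. To get a genuinely higher-rank action you need a centralizer element $g$ that displaces points a definite distance along center leaves, in a direction transverse to $f_0$ in every simple factor. Such a $g$ is far from the identity in $\mathcal Z(f)$. Joint smoothness of the action (Theorem \ref{czlieg}) gives no bound on the $C^1$ distance from $g$ to any algebraic element, let alone one that goes to zero with $d_{C^1}(f,f_0)$.

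What is actually available a priori is $C^0$ information along the center. Since $g$ fixes center leaves and commutes with $f$, it commutes with the global $su$-holonomies. Comparing the holonomies of $f$ and $f_0$ via Theorem \ref{c1hol} then shows that $g$ is projectively close to a left translation (Proposition \ref{gtransl}). About $Dg$ on $E^s_f\oplus E^u_f$ nothing is known. Consequently neither the KAM method (which needs $C^\infty$ or high $C^k$ closeness) nor the geometric method of \cite{DK,VinWang} (which needs every generator, not just one, to be $C^1$-close) applies. The real obstacle is this semi-local situation, not the passage from $A$ to a subgroup $B$, which is already covered for twisted spaces in \cite{VinWang}. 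Your proposal assumes that obstacle away.

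\textbf{What is needed instead, and what the paper does.} The paper works with the $\mathbb Z^2$-action $\alpha=\langle f,g\rangle$, where $g$ comes from Proposition \ref{prop:gtog_0generic}. Your dimension count in Step 3 is the right one for producing such a $g$. The argument then proceeds as follows.
\begin{enumerate}
\item Define topological Lyapunov foliations $\mathcal W^\mu_\#=\phi(\mathcal W^{\mu,c}_{f_0})\cap\mathcal W^{s/u}_f$, which $\CZ(f)$ preserves.
\item Derive H\"older exponential contraction along them from the center information and the H\"older leaf conjugacy alone (Lemma \ref{gcontr}). Thus $\alpha$ is a topological perturbation of a genuinely higher-rank $\alpha_0$ (Definition \ref{topper}).
\item Run the Lyapunov-cycle argument for the center-translation cocycle $\hat\beta$. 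This uses only $C^0$ proximity of foliations: the results of \cite{VinWang} on $C^0(\tilde\alpha_0)/S(\tilde\alpha_0)$, the cycle bijection, and triviality of small homomorphisms of $\tilde\Gamma$. The output is a H\"older conjugacy.
\item Prove a posteriori, via Pesin theory, normal forms and subadditive estimates, that generic elements of $\CZ_0(f)$ are partially hyperbolic. This makes the foliations smooth, so the normal-form argument of \cite{KS} upgrades the conjugacy to a smooth one.
\end{enumerate}

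\textbf{Smaller issues.} Your claim in Step 2 that $\mathcal Z(f)^0$ is abelian is unsupported; the $ax+b$ group acts freely and properly on $\mathbb R^2$. There is also no a priori homomorphism $\mathcal Z(f)^0\to\mathfrak h$ defining $B$, only an approximately additive displacement map. Using $\langle f,g\rangle$ sidesteps both. Including $f$ in the acting group also settles your weight worry in Step 3. Genericity of $f_0$ means no weight vanishes on $\langle f_0,g_0\rangle$, which is what Proposition \ref{minper} needs. By contrast, $\mathcal Z(f)^0$ alone need not contain anything close to $f_0$, so its model $B$ could lie in the kernel of a weight.
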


In particular, if $G_0$ is a real-split \emph{simple} Lie group, $f_0$ is a generic element of the twisted Weyl chamber flow, then for any $C^1$-small perturbation $f$ of $f_0$, either the centralizer has dimension $0$ or $1$, or $f$ is smoothly conjugate to an element of the twisted Weyl chamber flow.

It was shown in \cite{DWX23} that (under some additional conditions) if the dimension of the smooth centralizer of a diffeomorphism $f$ is the same as the manifold $X$, then up to a finite cover, $f$ is smoothly conjugate to a minimal translation on a torus. Theorem \ref{cenrigtwgen} proves local rigidity under a much weaker assumption on the dimension of the centralizer: the critical dimension that forces $f$ to be affine is smaller than the dimension of the center foliation of $f$, which is much smaller than the dimension of the ambient manifold.

We also obtain rigidity results for many \emph{non-generic} elements of the Weyl chamber flow. 

For semisimple homogeneous spaces or twisted semisimple homogeneous spaces, we have the following result. Here, for two Lie groups \(H_1,H_2\), we write \(H_1\doteq H_2\) if finite-index subgroups of \(H_1\) and \(H_2\) are isomorphic as Lie groups.

\begin{thm}\label{cenrignongen:twisted}
    Let \(X = G/\Gamma\) be a (twisted) semisimple homogeneous space.
    Let $f_0$ be a non-trivial element of the (twisted) Weyl chamber flow, such that the center of $\mathcal Z(f_0)$ has dimension at least two in each simple component of $G_0$.
    
    In the case that $G$ is a twisted product, we further assume either $f_0$ has no zero weights, or $\rho$ has no symplectic component.
    
    Let \(f\in \mathrm{Diff}^\infty(X)\) be a sufficiently small \(C^1\)-perturbation of \(f_0\). If \(\mathcal Z(f) \doteq \mathcal Z(f_0)\), then \(f\) is smoothly conjugate to an element of the (twisted) Weyl chamber flow.
\end{thm}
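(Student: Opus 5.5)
The plan is to reduce to a local rigidity statement for a higher-rank abelian action, and then apply known local rigidity results for (twisted) Weyl chamber flows.

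\emph{Step 1: centralizer structure.} By Theorem \ref{czlieg} (Theorem 3.1 of \cite{Wang}), $\mathcal Z(f)$ is a Lie group acting jointly smoothly on $X$. The proof of that result also shows that $\mathcal Z(f)$ is close to $\mathcal Z(f_0)$: the Lie algebra of $\mathcal Z(f)$ is realized by vector fields that are $C^0$-close to the algebraic ones. Since $f_0$ is a $K$-system, Theorem A of \cite{DWWX} identifies $\mathcal Z(f_0)$ with the algebraic centralizer. Its identity component is the product $L\cdot A$, where $L$ is the centralizer of $\exp a$ in $G$ acting by left translation. Using $\mathcal Z(f)\doteq\mathcal Z(f_0)$, I pass to finite-index subgroups and fix an isomorphism $\phi:\mathcal Z(f_0)^{0}\to \mathcal Z(f)^{0}$. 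The center of $\mathcal Z(f)$ then has the same dimension as that of $\mathcal Z(f_0)$, so by hypothesis it has dimension at least two in each simple factor. This gives an abelian subgroup $\mathbb R^{m}\subset \mathcal Z(f)$, together with $f$ itself, which projects with rank at least two onto each simple component of $G_0$.

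\emph{Step 2: the abelian action is a perturbation of a higher-rank algebraic action.} Let $\alpha$ be the action generated by $f$ and the identity component of the center of $\mathcal Z(f)$, and let $\alpha_0$ be the corresponding algebraic action. $\alpha_0$ consists of translations by a subgroup of $A$ together with the center of $L$, so it is a rank $\ge 2$ partially hyperbolic action in each factor. I need to show that $\alpha$ is $C^1$-close to $\alpha_0$ on a compact generating set, after composing with an automorphism of the abstract group. The plan is as follows. First, use $\phi$ and the normal hyperbolicity of the center foliation of $f$ (Hirsch--Pugh--Shub) to see that each central element of $\mathcal Z(f)$ preserves the center foliation. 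Second, using the closeness of the Lie algebras from Step 1, match the generators to algebraic ones. The center of $\mathcal Z(f_0)$ is a torus extension of $\mathbb R^{m}$ lying in $A\cdot Z(L)$, so compact parts can be quotiented out or absorbed.

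\emph{Step 3: apply local rigidity.} For the untwisted case I invoke the local rigidity of Weyl chamber flows on $G_0/\Gamma_0$ (Katok--Spatzier, Damjanović--Katok), applied factor by factor via the rank-two-per-factor condition. This yields a smooth conjugacy of $\alpha$ to $\alpha_0$ up to an automorphism, and in particular conjugates $f$ to an element of the flow. For the twisted case I use local rigidity for twisted Weyl chamber flows in the style of Damjanović--Katok and Vinhage--Wang, via KAM/cohomology. Here the fibre $\mathbb T^N$ directions require solving the twisted cohomological equation over the base. This is where the extra hypothesis enters. If $f_0$ has a zero weight, the zero-weight directions give a neutral subbundle in the fibre. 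Without a symplectic component, the only invariant pairing obstruction vanishes, so the linearized equation has tame solutions. When $f_0$ has no zero weights, the fibre directions are all hyperbolic for $f$ and are handled by the standard method.

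I expect the main obstacle to be Step 2: extracting from the purely abstract isomorphism $\mathcal Z(f)\doteq\mathcal Z(f_0)$ an abelian higher-rank action that is genuinely $C^1$-close to the algebraic one. To handle it, I would first restrict the centralizer to the center foliation, where it acts nearly isometrically. I would then compare the induced Lie algebra action with $\mathfrak z(\mathfrak l)\oplus\mathfrak h$ using the continuity in Theorem \ref{czlieg}, pinning down the center via the isomorphism type.
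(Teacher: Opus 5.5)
Your Steps 2--3 have a genuine gap, and it is the central difficulty of the theorem rather than a technicality. The local rigidity theorems you want to invoke (Katok--Spatzier, Damjanovi\'c--Katok, Vinhage--Wang, and a fortiori KAM-type results) require \emph{every} generator of the perturbed action to be close to its algebraic counterpart in at least the $C^1$ topology. The hypothesis gives this only for $f$.

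A central element $g\in Z(\CZ_0(f))$ is just some smooth diffeomorphism commuting with $f$. Nothing in Theorem \ref{czlieg} says its generating vector fields are close to algebraic ones. What one can actually prove is the content of Proposition \ref{gtransl} and Lemma \ref{centerisom}. Because $g$ fixes center leaves, it commutes with the $su$-holonomies of $f$, and these are $C^1$-close to the affine holonomies of $f_0$. Hence $g$ is only \emph{projectively $C^0$-close} to a central translation $g_0$ along center leaves. Even this needs care: the paper uses $\mathcal Z(f)\doteq\mathcal Z(f_0)$ only to get $Z(\CZ_0(f))\cong\R^\ell$ and $\dim\CZ_0(f)=\dim\W^c_f$. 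It then builds the geometric identification through the orbit map $\psi$ of Proposition \ref{choiceg}.

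You have no control of $Dg$ on $E^s_f\oplus E^u_f$, and commutation with $f$ does not supply any. Restricting to center leaves, as you propose, cannot produce transverse derivative bounds. In any case, $C^0$-closeness of vector fields would not give $C^1$-closeness of time-one maps.

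The missing idea is to bypass $C^1$-closeness altogether. The paper proceeds as follows.
\begin{enumerate}
\item It builds topological Lyapunov foliations $\W^\mu_\#=\phi(\W^{\mu,c}_{f_0})\cap\W^s_f$ (or $\cap\,\W^u_f$).
\item It proves H\"older-type exponential contraction of $g$ along them (Lemma \ref{gcontr}). This makes $\langle f,g\rangle$ a \emph{topological perturbation} of $\langle f_0,g_0\rangle$, and nothing more.
\item It reruns the Lyapunov-cycle/cocycle-rigidity argument of the geometric method, which needs only $C^0$-closeness of foliations (Theorem \ref{cocrig}), to obtain a H\"older conjugacy.
\item Only a posteriori does it prove that centralizer elements are partially hyperbolic (Proposition \ref{parthyp}, via Pesin theory, normal forms and subadditive estimates).
\item Finally it upgrades to $C^\infty$ via Katok--Spatzier.
\end{enumerate}

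Two further problems. First, even granting a local rigidity theorem, for non-generic $f_0$ the action $\alpha_0=\langle f_0,g_0\rangle$ with $g_0\in Z(G^c)$ has centralizer containing all of $G^c$. So the best conclusion is a conjugacy to $a\mapsto s(a)\alpha_0(a)$ with $s$ valued in $G^c$. Then $f$ is a priori conjugate only to some left translation $f_1\in G^c$, which may carry a unipotent or compact component. Your claim that this ``in particular conjugates $f$ to an element of the flow'' does not follow. You must use $\mathcal Z(f)\doteq\mathcal Z(f_0)$ a second time, as the paper does with Proposition \ref{prop:affinecent}, to get $\CZ(f_1)=\CZ(f_0)$ and hence force $f_1$ into the Weyl chamber flow.

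Second, the zero-weight/symplectic hypothesis does not enter through tame solvability of a twisted cohomological equation. In the paper it ensures that $G^c$ embeds in the universal central extension $\tilde G$. This lets the topological foliations lift, and makes $C^0(\tilde\alpha_0)/S(\tilde\alpha_0)$ minimally almost periodic (Proposition \ref{minper}).
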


Since $\mathcal Z(f_0)$ agrees with the algebraic centralizer of $f_0$, the condition that the center of $\mathcal Z(f_0)$ has dimension at least two in each simple component of $G_0$ is a purely algebraic condition, which can also be restated as, in each simple factor $\mathfrak g_i$ of rank $k_i$, no collection of $k_i-1$ linearly independent roots all vanish on $f_0$.

\subsection{Some history and further questions}
The results above sit at the intersection of two rigidity programs.

The first is the centralizer rigidity program.  Smale's conjecture and the
theorem of Bonatti--Crovisier--Wilkinson \cite{BCW} show that trivial
centralizer is the generic behavior in the \(C^1\) topology.  Centralizer
rigidity studies the opposite situation: near algebraic models, a large
centralizer should force algebraic structure.  Damjanovi\'c--Wilkinson--Xu
\cite{DWX} initiated this program for partially hyperbolic systems such as
discretized geodesic flows and certain partially hyperbolic affine toral automorphisms.  Further
results were obtained for fibered partially hyperbolic systems and nilmanifold examples by the same authors and Sandfeldt \cite{DWX23, Sandfeldt}.  In the semisimple direction,
the author \cite{Wang} proved centralizer rigidity near generic elements of Weyl
chamber flows.

The present work extends this semisimple theory to twisted
Weyl chamber flows and to many non-generic elements.

The second is the local rigidity theory of higher-rank actions.  For
higher-rank abelian algebraic actions, Katok--Spatzier proved local
rigidity for many homogeneous Anosov actions \cite{KS}.  Damjanovi\'c and
Katok developed the geometric method for restrictions of diagonal actions of $\mathrm{SL}_n\R/\Gamma$
\cite{DK}.  Vinhage extended the theory to general semisimple homogeneous
spaces \cite{Vinhage}, and Vinhage--Wang treated a larger class of partially
hyperbolic homogeneous actions, including non-generic and twisted examples \cite{VinWang}.
Those results assume that a whole higher-rank action is perturbed.  The
present paper proves rigidity under a weaker, semi-local hypothesis: only
one element is perturbed, and the missing higher-rank action is recovered
from the centralizer.

From this viewpoint, the paper also fits naturally into the philosophy of
Zimmer-type rigidity.  Zimmer's program predicts that actions of
higher-rank groups and lattices are strongly constrained and often
algebraic.  Local rigidity results for standard higher-rank actions, such as
the work of Fisher--Margulis \cite{FM}, are part of this same philosophy.
Here the acting higher-rank group is not given at the outset after
perturbation.  Instead, the algebraic structure of the centralizer
plays the role of the higher-rank symmetry, and the conclusion is that the
single perturbed element is forced back into the algebraic family.

The main difficulty in the approach here is to find an appropriate element of the centralizer and
to prove that it has enough dynamical structure for higher-rank rigidity
methods to apply.  This is especially delicate in the non-generic case.
When the chosen Cartan element lies on a Weyl chamber wall, some roots or
weights vanish; the center direction becomes larger, and the usual
Lyapunov foliations may merge or degenerate.

The strategy is to use the centralizer itself to recover higher-rank
hyperbolic structure.  First, one identifies the affine model suggested by
the action of the centralizer to get dynamical description along the center leaves.  Then one uses the
interaction between the center foliation and the stable and unstable
foliations of \(f\) to prove that suitable centralizer elements have the
expected hyperbolicity along the coarse Lyapunov directions.  A technical
point is that these two pieces of information a priori produce two possibly
different affine models; part of the proof is to show that they coincide.

On the other hand, several questions remain open:

\begin{enumerate}
    \item Does Conjecture 2 of \cite{DWWX} hold for elements of the twisted Weyl chamber flow?

    \item In twisted semisimple homogeneous spaces, can we treat the cases where \(f_0\) has only one nonzero root up to sign, or where \(f_0\) possesses zero weights for general representations?

    \item  Can one extend the results to non-diagonalizable left translations in semisimple homogeneous spaces and twisted semisimple homogeneous spaces? 
    
    The main difference for these maps is that the dimension of the center foliation is usually greater than the dimension of the affine centralizer, which requires a different proof of partial hyperbolicity of elements in the centralizer.

    \item Finally, is it possible to obtain rigidity by only an assumption of the dimension of the centralizer, as seen for generic elements in semisimple homogeneous spaces of simple Lie groups? Is there a minimal critical dimension $r$, such that if $\mathrm{dim} \;\mathcal Z(f)> r$, then $f$ is smoothly conjugate to an element of the (twisted) Weyl chamber flow?
    
    We cannot expect the critical dimension to be $1$, as in the case of generic elements. For instance, consider \(f_0=\mathrm{diag}(e,e,\ldots,e,e^{-(n-1)})\) acting on \(\mathrm{SL}_n(\mathbb{R})/\Gamma\). A non-homogeneous perturbation of the form
    \[
    f(x)=\mathrm{diag}(e^{t(x)},e^{t(x)},\ldots,e^{t(x)},e^{-(n-1) t(x)})
    \]
    where \(t(x)\) is invariant under the left action of \(\mathrm{SO}(n-1)\) (viewed as a subgroup stabilizing the first \(n-1\) entries up to scalar), can lead to \(\dim \mathcal Z(f)\ge \dim \mathrm{SO}(n-1)+1\). This increase in centralizer dimension does not contradict Conjecture 2 of \cite{DWWX}, as the added dimension arises from a compact factor \(\mathrm{SO}(n-1)\) and a one-dimensional flow. It would be an interesting question to give an exact bound on the largest possible dimension of the centralizer of a non-affine perturbation.
\end{enumerate}

In a forthcoming paper with Wilkinson \cite{WangWil}, we also show that under some regularity condition, Conjecture 3 of \cite{DWWX} is true for non-generic diagonal elements with only one non-zero root in $\mathrm{SL}_n\R/\Gamma$, which treats the cases left out by Theorem \ref{cenrignongen:twisted}.

\subsection{Organization and outline of the proof}\label{outl}

We first recall some preliminaries about twisted Weyl chamber flow and partially hyperbolic dynamics in Section \ref{prelim}.

The remainder of the paper carries out the proof in the following steps.

First, we separate the centralizer $\mathcal Z(f)$ into the center-fixing part and a discrete part in $\mathrm{Out}(\Gamma)$. Since \(f\) is a partially hyperbolic diffeomorphism, elements of its centralizer \(\mathcal{Z}(f)\) preserve the stable, unstable, and center foliations. We define the \emph{center-fixing centralizer}
\[\mathcal{CZ}(f) := \{ g \in \mathcal{Z}(f) : g(\mathcal{W}^c_f(x)) = \mathcal{W}^c_f(x),\ \forall x \in X \}\] where $\W^c_f$ is the center foliation of $f$. In Section \ref{sec:czf=zf}, we establish that the quotient group \(\mathcal{Z}(f)/\CZ(f)\) is a subgroup of \(\mathrm{Out}(\Gamma)\) which is discrete. 

Next, we focus on the action of \(\mathcal{CZ}(f)\) on the universal cover of a center leaf \(\tilde{\mathcal{W}}^c_f(x)\). In Section \ref{sec:cla}, we show that this action is free and proper and establish many dynamical properties of the elements of the centralizer inside the center foliation of $f$. In particular, we show that there exists an appropriate model element $g_0\in \CZ_0(f_0)$, such that $g$ is $C^0$ close to $g_0$, and has the same type of dynamical behavior as $g_0$ when restricted to the center bundle.

 In Section \ref{topleaf}, we then show that \(g\) also exhibits the same essential dynamical properties as $g_0$ along the stable and unstable foliations of $f$—specifically, the expansion and contraction rates of certain topological foliations of $g$ are the same as $g_0$. 

Using these established dynamical properties, Section \ref{sec:cocrig} follows the established scheme for proving cocycle rigidity and local rigidity of higher-rank restrictions of the (twisted) Weyl chamber flow. This culminates in showing that \(f\) is smoothly conjugate to an element of the (twisted) Weyl chamber flow, thereby proving our main rigidity theorems.

\subsection{Acknowledgments}
The author thanks Amie Wilkinson for many helpful discussions and suggestions on an early draft. The author thanks Kurt Vinhage for explaining his work and for many useful discussions. The author also thanks Aaron Brown, David Fisher, Homin Lee, Sven Sandfeldt and Disheng Xu for helpful discussions and remarks.

\section{Setting and preliminaries}\label{prelim}
In this section, we provide a brief introduction to the twisted Weyl chamber flow and its dynamics, the theory of partially hyperbolic dynamical systems, Pesin theory, and some results on holonomies of cocycles.

\subsection{Twisted Weyl chamber flow}\label{semisimple}

Let $G_0$ be an $\mathbb{R}$-split semisimple Lie group with Lie algebra $\mathfrak{g}_0$, and let $\rho:G_0\to \mathrm{SL}_N(\mathbb{R})$ be a representation whose irreducible components are non-trivial. Let $\Gamma_0$ be a cocompact lattice in $G_0$ such that $\Gamma_0 = \rho^{-1}(\mathrm{SL}_N(\mathbb{Z}) \cap \rho(G_0))$. Since $G_0$ is $\mathbb{R}$-split, the adjoint action of elements in a Cartan subalgebra $\mathfrak{h} \subset \mathfrak{g}_0$ is diagonalizable over $\mathbb{R}$. We define the \emph{rank} of $G_0$ to be
\[
\operatorname{rank}(G_0) = \dim (\mathfrak{h}).
\]

We consider the semidirect product $G = G_0 \ltimes_\rho \mathbb{R}^N$ with multiplication
\[
(a,v) \cdot (b,w) = (ab, \; v + \rho(a)w).
\]
We take the lattice $\Gamma = \Gamma_0 \ltimes_\rho \mathbb{Z}^N$, which is cocompact in $G$, and consider the quotient space $X = G/\Gamma$.

Let $\mathfrak{g}$ be the Lie algebra of $G$ and $\mathfrak{h}$ be a Cartan subalgebra of $\mathfrak{g}_0$ (viewed as a subalgebra of $\mathfrak{g}$). We denote by $D = \exp(\mathfrak{h})$ the corresponding maximal split abelian subgroup of $G_0$.

The differential of the action of $D$ on $G$ (through the adjoint action on $\mathfrak{g}_0$ and the derived representation on $\mathbb{R}^N$) induces the following decomposition of the Lie algebra $\mathfrak{g}$ of $G$:
\[
\mathfrak{g} = \mathfrak{g}_0 \oplus \mathbb{R}^N = \mathfrak{h} \oplus \sum_{r \in \Delta_0} \mathfrak{g}_r \oplus \sum_{\nu \in \Phi_0} \mathfrak{e}_\nu.
\]
Here, 
\[
\mathfrak{g}_r = \{ u \in \mathfrak{g}_0 : \operatorname{ad}(h)u = r(h)u, \; \forall h \in \mathfrak{h} \}
\]
are the eigenspaces of the adjoint action of $\mathfrak{h}$ on $\mathfrak{g}_0$, and
\[
\Delta_0 = \{ r \in \mathfrak{h}^* : \mathfrak{g}_r \neq \varnothing, \; r \neq 0 \}
\]
is the set of roots of $G_0$. 

The weight spaces of $\rho$ are given by
\[
\mathfrak{e}_\nu = \{ v \in \mathbb{R}^N : \rho(\exp(h))v = e^{\nu(h)}v, \; \forall h \in \mathfrak{h} \},
\]
and the set of weights of $\rho $ is
\[
\Phi_0 = \{ \nu \in \mathfrak{h}^* : \mathfrak{e}_\nu \neq 0 \}.
\]

For $r \in \Delta_0$, we call $\ker(r)$ a \emph{Weyl chamber wall} of $G_0$; for $\mu \in \Delta_0 \cup \Phi_0$, we call $\ker(\mu)$ a \emph{Weyl chamber wall} of $G$. The connected components of
\[
\mathfrak{h} \setminus \bigcup_{\mu \in \Delta_0 \cup \Phi_0} \ker(\mu)
\]
are called the \emph{Weyl chambers} of $G$.

The \emph{twisted Weyl chamber flow} in $X=(G_0\ltimes \R^N)/(\Gamma_0\ltimes \Z^N)$ is the action of $D\ltimes \{0\}$ acting on $X$ via left multiplication. An element $f_0$ of the twisted Weyl chamber flow is an element of the form $f_0=(\exp(a), 0)$, for $a \in \mathfrak{h}$. We say that such an element $f_0$ is \emph{generic} if $a \notin \ker(\mu)$ for any $\mu \in \Delta_0 \cup \Phi_0$.

The differential of an element $f_0 = (\exp(a), 0)$ of the twisted Weyl chamber flow (where $a \in \mathfrak{h}$) preserves a splitting of the tangent space $T_x X$ at each point $x \in X$, whose components correspond to the Lie algebra decomposition:
\[
\mathfrak{g} = \mathfrak{g}_0 \oplus \mathbb{R}^N = \mathfrak{h} \oplus \sum_{r \in \Delta_0} \mathfrak{g}_r \oplus \sum_{\lambda \in \mathbb{R}} \mathfrak{e}_{\lambda},
\]
where $\mathfrak{e}_{\lambda} = \bigoplus_{\nu \in \Phi_0 : \nu(a) = \lambda} \mathfrak{e}_\nu$.
The contraction and expansion rates (Lyapunov exponents) of $Df_0$ on these subbundles are given by $r(a)$ for $\mathfrak{g}_r$ and $\lambda$ for $\mathfrak{e}_\lambda$. Specifically, for $k \in \mathbb{Z}$, we have $|Df_0^k|_{\mathfrak{g}_r}| = e^{k r(a)}$ and $|Df_0^k|_{\mathfrak{e}_\lambda}| = e^{k \lambda}$. This shows that $f_0$ is a partially hyperbolic (see Section \ref{prel:ph}) diffeomorphism  in $X$. The center bundle corresponds to the sum of eigenspaces where the exponent is zero, specifically $E^c_{f_0}\cong \mathfrak{h} \oplus_{r(a)=0} \mathfrak g_r\oplus \mathfrak{e}_0$.

The integral manifold through $x$ for the center bundle forms the center leaf $\mathcal{W}^c_{f_0}(x)$. This forms a center foliation $\W^c_{f_0}$, whose leaves are cosets of
the  subgroup \[
G^c = \exp(\mathfrak{h} \oplus_{r(a)=0} \mathfrak g_r\oplus \mathfrak{e}_0).\] The left cosets of $G^c$ in $G$ project to form the center leaves of the partially hyperbolic diffeomorphism $f_0$.

\subsubsection{A concrete example of a twisted semisimple homogeneous space}\label{extwcf}
We now give a concrete example of a twisted semisimple homogeneous space with a cocompact lattice satisfying our assumptions for $G = \mathrm{SL}_3(\mathbb{R}) \ltimes_\rho \mathbb{R}^9$, as described in \cite{Witte}.

Let $G_0 = \mathrm{SL}_3(\mathbb{R})$, with its Lie algebra
\[
\mathfrak{sl}_3(\mathbb{R}) = \{ A \in M_{3 \times 3}(\mathbb{R}) : \operatorname{tr} A = 0 \}.
\]
We identify $\mathbb{R}^9$ with the vector space of $3 \times 3$ real matrices, denoted $M_{3 \times 3}(\mathbb{R})$.

Let $\rho_0: \mathrm{SL}_3(\mathbb{R}) \to \mathrm{GL}_9(\mathbb{R})$ be the representation defined by left matrix multiplication: $\rho_0(A)(B) = A \cdot B$ for $A \in \mathrm{SL}_3(\mathbb{R})$ and $B \in M_{3 \times 3}(\mathbb{R})$. Since $\det(A)=1$, the determinant of the linear transformation $\rho_0(A)$ (viewed as an element of $\mathrm{GL}_9(\mathbb{R})$) is also 1, so $\rho_0(A) \in \mathrm{SL}_9(\mathbb{R})$.

The construction of the cocompact lattice $\Gamma_0 \subset \mathrm{SL}_3(\mathbb{R})$ follows Proposition 6.7.4 in \cite{Wittebook}. Let $L$ be a cubic Galois extension of $\mathbb{Q}$ with ring of integers $\mathcal{O}$. Let $\sigma$ be the generator of $\operatorname{Gal}(L/\mathbb{Q})$, and let $p \in \mathbb{N}_+$ be an integer such that $p \neq t \sigma(t) \sigma^2(t)$ for any $t \in L$.

Define a map $\psi: L^3 \to M_{3 \times 3}(L)$ by
\[
\psi(x,y,z) = \begin{pmatrix}
     x & y & z \\
     p\sigma(z) & \sigma(x) & \sigma(y) \\
     p\sigma^2(y) & p\sigma^2(z) & \sigma^2(x)
 \end{pmatrix}.
\]
The lattice $\Gamma_0$ is then defined as $\Gamma_0=\mathrm{SL}_3(\mathbb{R}) \cap \psi(\mathcal{O}^3)$. It can be shown that $\Gamma_0$ is a cocompact lattice in $\mathrm{SL}_3(\mathbb{R})$ and can equivalently be characterized as the set of $g \in \mathrm{SL}_3(\mathbb{R})$ such that $g \psi(\mathcal{O}^3) = \psi(\mathcal{O}^3)$.

Now, we define the representation $\rho$ that ensures the integrality condition for the twisted lattice. Choose $T \in \mathrm{GL}_9(\mathbb{R})$ such that $T(\mathbb{Z}^9) = \psi(\mathcal{O}^3)$ (such a $T$ exists because both are lattices in $\R^9$). Define $\rho: \mathrm{SL}_3(\mathbb{R}) \to \mathrm{SL}_9(\mathbb{R})$ by $\rho(g)B = T^{-1} \rho_0(g) T B$ for any $B \in \mathbb{R}^9$. With this choice, we have
\[
\Gamma_0 = \{ g \in \mathrm{SL}_3(\mathbb{R}) : \rho_0(g) T(\mathbb{Z}^9) = T(\mathbb{Z}^9) \} \]\[= \{ g \in \mathrm{SL}_3(\mathbb{R}) : T^{-1} \rho_0(g) T (\mathbb{Z}^9) = \mathbb{Z}^9 \} = \rho^{-1}(\mathrm{SL}_9(\mathbb{Z})).
\]
This construction yields a non-trivial twisted semisimple homogeneous space $(\mathrm{SL}_3(\mathbb{R}) \ltimes_\rho \mathbb{R}^9) / (\Gamma_0 \ltimes_\rho \mathbb{Z}^9)$ with a cocompact lattice.

A standard choice of a Cartan subalgebra in $\mathfrak{sl}_3(\mathbb{R})$ is the set of diagonal matrices:
\[
\mathfrak{h} = \left\{ \operatorname{diag}(t_1, t_2, t_3) : t_1 + t_2 + t_3 = 0, \; t_1, t_2, t_3 \in \mathbb{R} \right\}.
\]
The roots for the adjoint action on $\mathfrak{sl}_3(\mathbb{R})$ are $r_{ij}(\operatorname{diag}(t_1,t_2,t_3)) = t_i - t_j$, each corresponding to a 1-dimensional root space.
For the representation $\rho_0(A)(B) = A \cdot B$, the weights are $\mu_i(\operatorname{diag}(t_1,t_2,t_3)) = t_i$, each associated with a 3-dimensional weight space (corresponding to the $i$-th row of $B$).

The Weyl chamber walls in $\mathfrak{h}$ are given by the equations $t_i = t_j$ (from the roots) and $t_i = 0$ (from the weights). Since $\mathfrak{h}$ is a 2-dimensional space (isomorphic to $\mathbb{R}^2$ embedded in $\mathbb{R}^3$), these six lines partition $\mathfrak{h}$ into $12$ distinct Weyl chambers.

\subsubsection{Universal central extension of a twisted semisimple homogeneous space}\label{cenext}
In this section we define the universal central extension, which will be useful in Section \ref{sec:cocrig}, where we apply some algebraic properties of the Weyl chamber flow lifted to the universal central extension. 

Let $G = G_0 \ltimes_\rho \mathbb{R}^N$, where $G_0$ is a semisimple Lie group and $\rho$ is a representation with no trivial component. Since $G_0$ is semisimple, its Lie algebra $\mathfrak{g}_0$ is perfect (i.e., $[\mathfrak{g}_0, \mathfrak{g}_0] = \mathfrak{g}_0$). The Lie algebra of $G$ is $\mathfrak{g} = \mathfrak{g}_0 \oplus \mathbb{R}^N$, with the Lie bracket defined as $[(X,v), (Y,w)] = ([X,Y], d\rho(X)w - d\rho(Y)v)$. Because $G_0$ is semisimple and $\rho$ has no trivial component, the derived representation satisfies $d\rho(\mathfrak{g}_0)\mathbb{R}^N=\mathbb{R}^N$. Consequently, the commutator subalgebra $[\mathfrak{g},\mathfrak{g}]$ equals $\mathfrak{g}$, which implies that $\mathfrak{g}$ is perfect.

\begin{defi}
    Let $S$ be a group. A \emph{central extension} of $S$ is a group homomorphism $\theta: S' \to S$ such that its kernel $\ker(\theta)$ is contained in the center of $S'$. A central extension is \emph{perfect} if the covering group $S'$ is perfect.
\end{defi}

For a semisimple Lie group $S$, all perfect central extensions are factors of its universal covering group. However, for the semidirect product $G = G_0 \ltimes_\rho \mathbb{R}^N$, non-trivial perfect central extensions exist if and only if the representation $\rho$ possesses a symplectic component (i.e., admits a non-trivial $\rho$-invariant anti-symmetric bilinear form on $\mathbb{R}^N$); see Proposition 5.6 of \cite{VinWang}.

\begin{prop}[Definition 5.2, Proposition 5.7, \cite{VinWang}]
    There exists a perfect central extension $\Theta: \widetilde{G} \to G$, such that
    \[
    \widetilde{G} = \widetilde{G}_0 \ltimes_{\widetilde{\rho}} N,
    \]
    where $\widetilde{G}_0$ is the universal cover of $G_0$ and $N$ is a nilpotent Lie group whose Lie algebra is $\mathbb{R}^N \oplus \Omega_\rho$. Here, $\Omega_\rho$ denotes the vector space of $\rho$-invariant anti-symmetric bilinear forms in $\mathbb{R}^N$.
    
    Any perfect central extension $H \to G$ is a factor of this extension $\Theta: \widetilde{G} \to G$.
\end{prop}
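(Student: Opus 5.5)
The plan is to build the extension at the Lie algebra level from the second cohomology $H^2(\mathfrak g;\mathbb R)$, and then integrate it to simply connected groups. Since $\mathfrak g$ is perfect (shown just above), it has a universal central extension $\hat{\mathfrak g}\to\mathfrak g$, with kernel $H^2(\mathfrak g;\mathbb R)^*$. So the first step is to compute $H^2(\mathfrak g;\mathbb R)$.

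For this I use the Hochschild--Serre spectral sequence for the ideal $\mathbb R^N\subset\mathfrak g$, with quotient $\mathfrak g_0$. The $E_2$ terms contributing to total degree $2$ are:
\[
H^2(\mathfrak g_0;\mathbb R),\qquad H^1(\mathfrak g_0;(\mathbb R^N)^*),\qquad H^0(\mathfrak g_0;\Lambda^2(\mathbb R^N)^*).
\]
By Whitehead's lemmas, $H^1$ and $H^2$ of the semisimple algebra $\mathfrak g_0$ vanish with coefficients in any finite-dimensional module. Hence only the last term survives, and it is the space of $\mathfrak g_0$-invariant skew forms. Since $G_0$ is connected, this is exactly $\Omega_\rho$.

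I then check that every invariant form $\omega$ gives a genuine (non-exact) cocycle on $\mathfrak g$, defined by $c_\omega((X,v),(Y,w))=\omega(v,w)$. This can be seen directly, which also shows that no differential kills it. The cocycle identity follows from $\omega(v,w)$ being $\mathfrak g_0$-invariant and $\omega$ vanishing on brackets into $\mathfrak g_0$. Using these cocycles, I set
\[
\hat{\mathfrak g}=\mathfrak g_0\oplus\mathbb R^N\oplus\Omega_\rho^*,
\]
where the $\Omega_\rho^*$-component of the bracket of $(X,v)$ and $(Y,w)$ is $\omega\mapsto\omega(v,w)$. (Identifying $\Omega_\rho^*$ with $\Omega_\rho$ is harmless.) Then $\mathfrak n=\mathbb R^N\oplus\Omega_\rho^*$ is a $2$-step nilpotent ideal on which $\mathfrak g_0$ acts by derivations: by $d\rho$ on $\mathbb R^N$ and trivially on the centre.

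Next I integrate. Let $N$ be the simply connected nilpotent group with Lie algebra $\mathfrak n$, and let $\tilde\rho:\widetilde G_0\to\mathrm{Aut}(N)$ integrate the derivation action; this is possible since $\widetilde G_0$ is simply connected. Then $\widetilde G=\widetilde G_0\ltimes_{\tilde\rho}N$ is simply connected with Lie algebra $\hat{\mathfrak g}$. The map $\Theta$ is the covering $\widetilde G_0\to G_0$ on the first factor and the quotient $N\to N/\exp(\Omega_\rho^*)=\mathbb R^N$ on the second. One checks $\Theta$ is a homomorphism because $\tilde\rho$ descends to $\rho$ on $\mathbb R^N$. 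Its kernel is generated by $\exp(\Omega_\rho^*)$ and by elements $(z,0)$ with $z\in\ker(\widetilde G_0\to G_0)$. Both kinds are central:
\begin{itemize}
\item $\exp(\Omega_\rho^*)$ is central in $N$, and $\tilde\rho$ is trivial on it;
\item $z$ is central in $\widetilde G_0$, and $\tilde\rho(z)$ acts trivially on $N$, because its action on $\mathbb R^N$ is $\rho(1)=\mathrm{id}$ and the action on the centre is trivial.
\end{itemize}
Finally, $\widetilde G$ is perfect, because $\hat{\mathfrak g}$ is perfect and $\widetilde G$ is connected.

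For universality, let $H\to G$ be any perfect central extension. I would first argue that $H$ may be taken connected; this is where some care about perfectness versus the identity component is needed. Its Lie algebra is then a perfect central extension of $\mathfrak g$, so by universality of $\hat{\mathfrak g}$ there is a Lie algebra map $\hat{\mathfrak g}\to\mathrm{Lie}(H)$ over $\mathfrak g$. Since $\widetilde G$ is simply connected, this map integrates to a homomorphism $\widetilde G\to H$. The two composites $\widetilde G\to G$ agree because they have the same differential and $\widetilde G$ is connected, so $H$ is a factor of $\widetilde G$.

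I expect the main obstacle to be the cohomology computation, namely justifying that $H^2(\mathfrak g)\cong\Omega_\rho$ exactly. Whitehead's lemmas make it clean, but one must check that the surviving $E_2^{0,2}$ term is not killed by a differential; this is why I verify directly that the $c_\omega$ are non-trivial cocycles. The secondary technical point is showing that the kernel of $\Theta$ is central when $\rho$ is nontrivial on the centre of $G_0$; this is handled by the second item above.
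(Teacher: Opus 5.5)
The paper gives no proof of this proposition. It is imported from \cite{VinWang} (Definition 5.2 and Proposition 5.7 there), so there is no in-text argument to compare yours with.

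Your proposal is a correct, self-contained proof by the standard cohomological route. You compute $H^2(\mathfrak g;\R)\cong\Omega_\rho$ via Hochschild--Serre and Whitehead's lemmas, write down the universal central extension $\hat{\mathfrak g}=\mathfrak g_0\ltimes(\R^N\oplus\Omega_\rho^*)$ explicitly, and integrate it to the simply connected group $\widetilde G_0\ltimes_{\tilde\rho}N$.

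Your direct check that each $c_\omega$ is a cocycle is genuinely needed. The differential $d_2$ out of $E_2^{0,2}$ lands in $H^2(\mathfrak g_0;(\R^N)^*)=0$, but $d_3\colon E_3^{0,2}\to E_3^{3,0}=H^3(\mathfrak g_0;\R)$ has nonzero target for semisimple $\mathfrak g_0$. Exhibiting $c_\omega$ shows $E_\infty^{0,2}=E_2^{0,2}$. Injectivity of $\omega\mapsto[c_\omega]$ holds because coboundaries $-\lambda([\cdot,\cdot])$ vanish on the abelian ideal $\R^N$.

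In your second bullet on centrality, the operative fact is that the derivation action preserves the vector-space splitting $\mathfrak n=\R^N\oplus\Omega_\rho^*$. Hence $\tilde\rho(g)$ acts as $\rho(\bar g)\oplus\mathrm{id}$, where $\bar g$ is the image of $g$ in $G_0$. Triviality on the quotient $\mathfrak n/\Omega_\rho^*$ and on $\Omega_\rho^*$ alone would not suffice, since $v\mapsto v+\phi(v)$ with $\phi\colon\R^N\to\Omega_\rho^*$ linear is also an automorphism of $\mathfrak n$. Relatedly, the centre of $\mathfrak n$ can be larger than $\Omega_\rho^*$ when $\rho$ has non-symplectic components, so ``centre'' in your write-up should read $\Omega_\rho^*$.

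The step you left open, reducing to connected $H$, is not actually needed. Write $p\colon H\to G$. The Lie algebra of $H$ is a central extension of $\mathfrak g$, perfect or not. Universality of $\hat{\mathfrak g}$ then gives a map over $\mathfrak g$, which integrates to $\Phi\colon\widetilde G\to H$ with $p\circ\Phi=\Theta$.

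Perfectness of $H$ is used only to get surjectivity of $\Phi$, which is what being a factor requires. Since $\Theta$ is onto, $H=\Phi(\widetilde G)\cdot\ker p$ with $\ker p$ central. Hence $H=[H,H]=[\Phi(\widetilde G),\Phi(\widetilde G)]\subseteq\Phi(\widetilde G)$, and in particular $H$ is automatically connected.
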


We refer to $\widetilde{G}$ as the \emph{universal central extension} of $G$.

The group $G$ admits non-trivial perfect central extensions if and only if $\rho$ has a symplectic component (i.e., $\Omega_\rho \neq \{0\}$). In this case, starting from the lattice $\Gamma = \Gamma_0 \ltimes_\rho \mathbb{Z}^N$ in $G$, we can construct a corresponding lattice $\widetilde{\Gamma}$ in $\widetilde{G}$. Specifically, $\widetilde{\Gamma}$ is a central extension of $\Gamma$ by a lattice in $\Omega_\rho$, such that $\Theta(\widetilde{\Gamma}) = \Gamma$.

\subsubsection{Some unified notation}

In this section, we list some notations that will be used consistently throughout this paper.

\begin{enumerate}
    \item We denote by $G_0$ a semisimple, $\mathbb{R}$-split Lie group with no simple factors of rank 1. $\Gamma_0$ denotes a cocompact lattice in $G_0$.

    \item The ambient group $G$ is taken to be either $G_0$ (for the semisimple homogeneous space case) or the semidirect product $G_0\ltimes_\rho \mathbb{R}^N$ (for the twisted semisimple homogeneous space case). Here, $\rho: G_0 \to \mathrm{SL}_N(\mathbb{R})$ is a representation with no trivial component, such that $\Gamma_0=\rho^{-1}( \mathrm{SL}_N(\mathbb{Z})\cap \rho(G_0))$. Correspondingly, $\Gamma$ denotes either $\Gamma_0$ or $\Gamma_0\ltimes_\rho \mathbb{Z}^N$. The manifold under consideration is $X=G/\Gamma$.

    \item We use $\Delta_0$ to denote the set of roots of $G_0$ (associated with the adjoint action of $\mathfrak{h}$ on $\mathfrak{g}_0$). $\Phi_0$ denotes the set of weights of the representation $\rho$ (associated with the action of $\mathfrak{h}$ on $\mathbb{R}^N$).
    The unified set of roots and weights, $\Lambda_0$, is defined as $\Delta_0$ in the semisimple homogeneous space case (i.e., when $\mathbb{R}^N=\{0\}$), and as $\Delta_0\cup \Phi_0$ in the twisted semisimple homogeneous space case.

    \item For $\mu\in \Lambda_0$, we shall  denote by $V_\mu=\mathfrak g_\mu\oplus \mathfrak e_\mu$ the eigenspace of $\mu\in \mathfrak h^*$, with the convention that \(\mathfrak g_\mu=0\) if \(\mu\notin\Delta_0\),
and \(\mathfrak e_\mu=0\) if \(\mu\notin\Phi_0\).

    \item Let $A \subset \mathfrak{h}$ be an additive subgroup of the Cartan subalgebra. We may consider the root and weight systems restricted to the action of $A$. We denote by $\Delta_A$ the roots of the action of $A$ on $\mathfrak{g}_0$, and by $\Phi_A$ the weights of the action of $A$ on $\mathbb{R}^N$.
    Similarly, $\Lambda_A$ is defined as $\Delta_A$ for semisimple homogeneous spaces and $\Delta_A \cup \Phi_A$ for twisted semisimple homogeneous spaces.

    \item $\widetilde{G}$ denotes the universal cover of $G$ if $G$ is a semisimple Lie group. In the twisted case, $\widetilde{G}$ denotes the universal central extension of $G=G_0\ltimes_\rho \mathbb{R}^N$.
\end{enumerate}
\subsection{Partial hyperbolicity and regularity}\label{prel:ph}
We recall the definitions of dominated splitting and partial hyperbolicity, and introduce the concept of leaf conjugacy, drawing on results from \cite{HPS}.

\begin{defi}[Dominated Splitting]
Let $M$ be a Riemannian manifold, and let $f:M\to M$ be a diffeomorphism. A \emph{dominated splitting} of $f$ is a $Df$-invariant decomposition $TM=E^1\oplus E^2\oplus \cdots \oplus E^k$ satisfying:
\begin{itemize}
\item For each $1\le i\le k$, $Df(E^i(x))=E^i(f(x))$ for any $x\in M$.
\item There exist constants $C>0$ and $0 < \lambda < 1$ such that for any integer $n \ge 1$, $x\in M$, and for any unit vectors $v\in E^i(x), u\in E^{i+1}(x)$,
$$
\|Df^n(x)v\|\le C\lambda^n \|Df^n(x)u\|,
$$
for $1\le i\le k-1$.
\end{itemize}
\end{defi}

\begin{defi}[Partially Hyperbolic Diffeomorphism]\label{defi:ph}
A diffeomorphism $f:M\to M$ of a Riemannian manifold $M$ is \emph{partially hyperbolic} if it admits a \emph{dominated splitting} $TM=E^s\oplus E^c\oplus E^u$, and there exist constants $C>0$ and $0<\mu_u,\mu_s<1$ such that for any integer $n \ge 1$, and any $x\in M$:
$$
\|Df^n(x)v\|\le C\mu_s^n\|v\|,\qquad \text{for any } v\in E^s(x),
$$
and
$$
\|Df^{-n}(x)u\|\le C\mu_u^n\|u\|,\qquad \text{for any } u\in E^u(x).
$$
In this paper, we assume that both the stable and unstable bundles $E^s$ and $E^u$ are non-trivial (i.e., non-zero dimensional).
\end{defi}

For a partially hyperbolic diffeomorphism, the stable and unstable bundles $E^s$ and $E^u$ are uniquely integrable to $f$-invariant stable and unstable foliations $\mathcal{W}^s$ and $\mathcal{W}^u$, respectively. The leaves of these foliations are globally defined by:
$$
\mathcal{W}^s(x)=\{y\in M:\; \exists\; C>0,\; d(f^n(x),f^n(y))\le C\mu_s^n \,\; \forall\; n>0 \},
$$
and
$$
\mathcal{W}^u(x)=\{y\in M: \;\exists\; C>0, \;d(f^n(x),f^n(y))\le C\mu_u^{-n} , \;\forall\; n<0 \}.
$$
Unlike $E^s$ and $E^u$, the center bundle $E^c$ is generally not integrable to a foliation. A partially hyperbolic diffeomorphism is said to be normally hyperbolic if there exists an $f$-invariant center foliation $\mathcal{W}^c$ such that $T\mathcal{W}^c = E^c$.

\begin{defi}[Normally Hyperbolic]
Let $f:M\to M$ be a diffeomorphism. For $r\ge 1$, we say $f$ is \emph{$r$-normally hyperbolic} with respect to a foliation $\mathcal{F}$ if $f$ preserves $\mathcal{F}$, $f$ is partially hyperbolic with respect to $E^c=T\mathcal{F}$, and there exists an integer $k \ge 1$ such that: 
$$
\sup_{x\in M}\|D_xf^k|_{E^s}\|\cdot \|(D_xf^k|_{E^c})^{-1}\|^r<1
$$
and
$$
\sup_{x\in M}\|(D_xf^k|_{E^u})^{-1}\|\cdot \|D_xf^{k}|_{E^c}\|^r<1.
$$

We say that $f$ is \emph{normally hyperbolic} if it is $1$-normally hyperbolic.
\end{defi}

Note that $r$-normal hyperbolicity is a $C^1$-open condition.

For a normally hyperbolic diffeomorphism $(f,\mathcal{F})$, heuristically, the induced action of $f$ on the leaf space $M/\mathcal{F}$ can be viewed as an Anosov system. Consequently, one might expect structural stability of $f$ up to the leaves of $\mathcal{F}$. Under suitable conditions, perturbations of $f$ are $C^0$-conjugate to $f$ modulo the leaves of $\mathcal{F}$; we call such a perturbation \emph{leaf conjugate} to $f$. Now we define leaf conjugacy precisely.

\begin{defi}[Leaf Conjugacy]
Suppose $(f,\mathcal{F}_f)$ and $(g,\mathcal{F}_g)$ are two diffeomorphisms of $M$ with invariant foliations $\mathcal{F}_f$ and $\mathcal{F}_g$, respectively. Then $(f,\mathcal{F}_f)$ and $(g,\mathcal{F}_g)$ are said to be \emph{leaf conjugate} via a leaf conjugacy $h\in \mathrm{Homeo}(M)$ if
$$
h(\mathcal{F}_f(x))=\mathcal{F}_g(h(x)) \quad \text{for all } x\in M.
$$
\end{defi}

A notion slightly stronger than normal hyperbolicity is dynamical coherence.

\begin{defi}[Dynamically Coherent]
A partially hyperbolic diffeomorphism $f$ is said to be \emph{dynamically coherent} if there exist two $f$-invariant foliations $\mathcal{W}^{cs}$ and $\mathcal{W}^{cu}$ that are tangent to the continuous subbundles $E^{cs}=E^c\oplus E^s$ and $E^{cu}=E^c\oplus E^u$, respectively.

If $f$ is dynamically coherent, the center foliation of $f$, denoted $\mathcal{W}^c$, is defined as the intersection $\mathcal{W}^{cs}\cap \mathcal{W}^{cu}$.
\end{defi}

By definition, $\mathcal{W}^c$ is an $f$-invariant foliation tangent to $E^c$ at every point, which implies that $(f,\mathcal{W}^c)$ is normally hyperbolic. It is important to note that dynamical coherence does not require $E^{cs}$, $E^{cu}$, or $E^c$ to be uniquely integrable to their respective foliations. For a more in-depth discussion, see \cite{BWcoh}.

Now we introduce a central result about leafwise structural stability for normally hyperbolic diffeomorphisms. A more general statement and proof can be found in \cite{BWcoh} and \cite{PSWholrev}.

\begin{thm}[Leafwise structural stability]\label{leafconj}
Let $f$ be a $C^r$ diffeomorphism of $M$ that admits an $f$-invariant center foliation $\mathcal{F}$ (i.e., $(f,\mathcal{F})$ is normally hyperbolic). If $(f,\mathcal{F})$ is $r$-normally hyperbolic, then the leaves of $\mathcal{F}$ are uniformly $C^r$. Furthermore, the bundles $E^u$ and $E^s$ are uniquely integrable with leaves as smooth as $f$, and $f$ is $C^1$-stably dynamically coherent.

Moreover, if $f$ is plaque expansive, e.g. if $\W^c_f$ is $C^1$ or if $f|_{\W^c_f}$ is isometric, then there exists a bi-H\"older leaf conjugacy $\phi$ from $f$ to any $C^1$-perturbation of $f$, such that $\phi$ is uniformly $C^r$ when restricted to the center leaves $\mathcal{W}^c_f$.
\end{thm}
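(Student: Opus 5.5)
This is the classical Hirsch--Pugh--Shub theory, and we would prove it by the graph transform. The plan has three steps: regularity of the leaves of $\mathcal F$, integrability of $E^s,E^u$ together with $C^1$-stable dynamical coherence, and finally the leaf conjugacy under plaque expansivity.

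\textbf{Regularity of the leaves.} We first fix a smooth approximation $\widetilde E^c$ of $T\mathcal F$ and smooth approximations of $E^s,E^u$. Locally, a plaque of $\mathcal F$ through $x$ is then a graph of a map $\sigma_x$ from a disc in $\widetilde E^c(x)$ to $\widetilde E^s(x)\oplus\widetilde E^u(x)$.

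The key tool is the fiber contraction theorem applied to jets. The graph transform induced by $f^k$ acts on $r$-jets of these graphs. The hypotheses
\[
\|Df^k|_{E^s}\|\cdot\|(Df^k|_{E^c})^{-1}\|^r<1
\quad\text{and}\quad
\|(Df^k|_{E^u})^{-1}\|\cdot\|Df^k|_{E^c}\|^r<1
\]
are exactly what makes the induced map on $j$-jets a fiber contraction for every $j\le r$. Concretely, the $j$-th derivative of the graph transforms by the normal part of $Df^k$, composed with $j$ copies of the inverse central part.

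The unique invariant section of this fiber contraction is continuous, and it coincides with the formal derivative of $\sigma_x$. By induction on $j$, the leaves are therefore uniformly $C^r$.

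\textbf{Integrability of $E^s,E^u$ and stable dynamical coherence.} The strong bundles $E^s,E^u$ integrate uniquely by the stable manifold theorem for the uniformly contracting part. Their leaves are as smooth as $f$, by the same graph transform with $r$ arbitrary, since no central bunching is needed along the strong direction.

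Next we take the center-stable and center-unstable foliations to be the saturations
\[
\mathcal W^{cs}=\bigcup_{y\in\mathcal F(x)}\mathcal W^s(y),
\qquad
\mathcal W^{cu}=\bigcup_{y\in\mathcal F(x)}\mathcal W^u(y).
\]
For a $C^1$-perturbation $g$, normal hyperbolicity persists because it is $C^1$-open. Applying the graph transform to $g$ yields invariant $\mathcal W^{cs}_g,\mathcal W^{cu}_g$, and their intersection gives a center foliation $\mathcal W^c_g$. This proves $C^1$-stable dynamical coherence.

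\textbf{Leaf conjugacy.} This step is the main obstacle. We construct $\phi$ via pseudo-orbits: for each $x$, the sequence $(g^n$ applied near $f$-plaques$)$ forms a $\delta$-pseudo-orbit of $g$ that respects the plaques of $\mathcal F$. Normal hyperbolicity gives a unique $\epsilon$-shadowing plaque sequence for $g$, and we set $\phi(x)$ to be the corresponding point of the $g$-center plaque, chosen by a smooth local projection along $\widetilde E^s\oplus\widetilde E^u$.

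Plaque expansivity is exactly what makes the shadowing plaque unique, so that $\phi$ is well defined and injective on leaves. We would check plaque expansivity in the two cases listed:
\begin{enumerate}
\item If $\mathcal F$ is $C^1$, then local product charts show that two plaque pseudo-orbits staying $\epsilon$-close must lie in the same plaques.
\item If $f|_{\mathcal F}$ is isometric, then the center has no drift, and the same conclusion follows.
\end{enumerate}

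Bi-H\"older regularity of $\phi$ follows from the standard exponential estimates in the shadowing argument. Finally, $C^r$ regularity of $\phi$ along center leaves comes from the fact that $\phi|_{\mathcal F(x)}$ is the holonomy-type projection between two uniformly $C^r$ leaves, which were obtained in the first step.
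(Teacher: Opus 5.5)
The paper gives no proof of this statement; it defers to Hirsch--Pugh--Shub, Burns--Wilkinson and Pugh--Shub--Wilkinson. Your outline follows that classical route, and your first step and the strong-manifold part of your second step are acceptable as sketches. (One correction there: the $E^u$-component of the jets must be handled with $f^{-k}$, which is what the second inequality is for.)

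The genuine gap is the perturbative claim: ``normal hyperbolicity persists because it is $C^1$-open; applying the graph transform to $g$ yields invariant $\mathcal W^{cs}_g,\mathcal W^{cu}_g$.'' Only the rate inequalities are $C^1$-open. Normal hyperbolicity of $g$ presupposes a $g$-invariant foliation tangent to $E^c_g$, and that is exactly what must be produced. The graph transform applied to $g$ gives only locally invariant center, center-stable and center-unstable \emph{plaque families}. It gives these for every partially hyperbolic diffeomorphism, including ones that are not dynamically coherent (e.g.\ the Rodriguez Hertz--Rodriguez Hertz--Ures example on $\mathbb T^3$). Such plaques are not unique, and nothing forces plaques through nearby points to fit together into leaves. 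Coherence of the perturbed plaques is the whole difficulty. It is not known to follow from normal hyperbolicity alone: whether normal hyperbolicity implies plaque expansivity is an open question going back to Hirsch--Pugh--Shub. The persistence theorems in the cited literature assume plaque expansivity, which is harmless in the paper because $\mathcal W^c_{f_0}$ is smooth.

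Consequently your last two steps are in the wrong order, and the third is circular: it shadows with ``the $g$-center plaque'' of a foliation that has not been legitimately constructed. You also first credit uniqueness of the shadowing plaque sequence to normal hyperbolicity and then to plaque expansivity; only the second is correct. Normal hyperbolicity gives \emph{existence} of plaque-respecting shadows. But two shadows of the same pseudo-orbit are simply two nearby plaque pseudo-orbits, and concluding that they lie in common plaques is the definition of plaque expansivity.

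Since the hypothesis concerns $f$, arrange the shadowing so that $f$'s plaque expansivity is what is used. Each $g$-orbit is a $\delta$-pseudo-orbit of $f$, so it is $\epsilon$-shadowed by an $\mathcal F$-respecting pseudo-orbit of $f$, unique up to plaques. This assignment of $\mathcal F$-plaques to points yields (the inverse of) the leaf conjugacy. One checks it is a homeomorphism close to the identity. Only then does one define $\mathcal W^c_g$ as the image of $\mathcal F$, and obtain $\mathcal W^{cs}_g,\mathcal W^{cu}_g$ by saturating with $\mathcal W^s_g,\mathcal W^u_g$.

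Finally, even for $f$ itself, the assertion that $\bigcup_{y\in\mathcal F(x)}\mathcal W^s(y)$ is a $C^1$ leaf tangent to $E^c\oplus E^s$ needs an argument, since $\mathcal W^s$ is only transversally H\"older. It comes from the center-stable graph transform together with the dynamical characterization of the stable set of a leaf.
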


Applying this to the diffeomorphisms of $X = G/\Gamma$ that we are considering, we see that any $C^1$-small perturbation $f$ of an element of the (twsited) Weyl chamber flow $f_0$ inherits desirable dynamical properties.

\begin{prop}\label{fparhyp}
Let $f_0$ be a non-trivial element of the (twisted) Weyl chamber flow. Then $f_0$ is $r$-normally hyperbolic with respect to its center foliation for any $r\in \N$. Consequently, for each fixed $r\in \N$ and any sufficiently small $C^1$-perturbation $f \in \mathrm{Diff}^r(X)$ of $f_0$:
\begin{enumerate}
    \item $f$ is a partially hyperbolic diffeomorphism with non-trivial stable and unstable bundles.
    \item The stable and unstable bundles $E^u_f,E^s_f$ are uniquely integrable.
    \item $f$ is dynamically coherent and $r$-normally hyperbolic.
    \item There exists a bi-H\"older leaf conjugacy between $f_0$ and $f$ that is uniformly $C^r$ in the center leaves.
\end{enumerate}
\end{prop}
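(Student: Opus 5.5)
The plan is to verify $r$-normal hyperbolicity of $f_0$ directly from the algebraic structure, and then read off items (1)--(4) from the general theory recalled above, chiefly Theorem \ref{leafconj}.

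\textbf{Step 1: the invariant splitting and its rates.} Since $f_0$ is left multiplication by $(\exp(a),0)$, its differential in the right-invariant trivialization of $TX$ (via $\mathfrak g$) is $\mathrm{Ad}(\exp a)$. This map is diagonalizable with eigenvalues $e^{r(a)}$ on $\mathfrak g_r$, $e^{\lambda}$ on $\mathfrak e_\lambda$, and $1$ on $\mathfrak h$. We set
\[
E^s=\bigoplus_{\mu(a)<0}V_\mu,\qquad E^u=\bigoplus_{\mu(a)>0}V_\mu,\qquad E^c=\mathfrak h\oplus\bigoplus_{\mu(a)=0}V_\mu .
\]
We fix a right-invariant metric adapted to this splitting, i.e. one making the eigenspaces orthogonal. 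Then
\[
\|Df_0^k|_{E^c}\|=\|(Df_0^k|_{E^c})^{-1}\|=1,\qquad \|Df_0^k|_{E^s}\|=e^{-k\chi},\qquad \|(Df_0^k|_{E^u})^{-1}\|=e^{-k\chi},
\]
where $\chi=\min\{|\mu(a)|:\mu\in\Lambda_0,\ \mu(a)\neq 0\}>0$.

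Non-triviality of $E^s$ and $E^u$ comes from nontriviality of $f_0$. In some simple factor there is a root $r$ with $r(a)\neq 0$, and $-r$ is also a root, so both signs occur. (In the twisted case with $a\neq 0$, some root is nonzero on $a$ because $\mathfrak h$ is spanned by coroots.) Since the rates on $E^c$ are exactly $1$, the two defining inequalities of $r$-normal hyperbolicity reduce to $e^{-k\chi}<1$ for every $r$. They therefore hold for all $r\in\N$ with $k=1$.

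\textbf{Step 2: the center foliation.} $E^c$ integrates to the foliation by cosets $xG^c\Gamma$. Here $G^c=\exp(\mathfrak h\oplus_{r(a)=0}\mathfrak g_r\oplus\mathfrak e_0)$ is a subgroup, because $\mathfrak h\oplus\bigoplus_{\mu(a)=0}V_\mu$ is the centralizer of $a$ in $\mathfrak g$, hence a subalgebra. This foliation is $f_0$-invariant, and $f_0$ acts isometrically along it in the chosen metric. In particular it is smooth, so $f_0$ is plaque expansive, and $(f_0,\W^c_{f_0})$ is $r$-normally hyperbolic for every $r$.

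\textbf{Step 3: passing to perturbations.} Items (1)--(3) follow because partial hyperbolicity and $r$-normal hyperbolicity are $C^1$-open conditions. Theorem \ref{leafconj} then gives unique integrability of $E^s_f$ and $E^u_f$, $C^1$-stable dynamical coherence, and hence a center foliation $\W^c_f$ for $f$. Item (4) follows from the second half of Theorem \ref{leafconj}, since $f_0$ is plaque expansive. That theorem provides a bi-H\"older leaf conjugacy from $f_0$ to $f$ which is uniformly $C^r$ along center leaves.

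\textbf{Main obstacle.} The only subtle point is the quantifier order: for each fixed $r$ the required $C^1$-neighborhood depends on $r$. The statement is formulated that way, so this is consistent with it. Everything else reduces to the explicit eigenvalue computation in Step 1.
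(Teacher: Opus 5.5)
Your proposal is correct and follows the paper's argument: read off $r$-normal hyperbolicity, with $Df_0$ isometric on $E^c$, from the $\mathrm{Ad}(\exp a)$-eigenspace splitting, and then invoke Theorem \ref{leafconj}; your Steps 1 and 2 simply supply details the paper leaves implicit. Two small slips: in the right-invariant trivialization the center leaves are $G^c x\Gamma$, not $xG^c\Gamma$ (the latter does not give a foliation of $G/\Gamma$ tangent to $E^c$); and since weights need not come in $\pm$ pairs, in general you only have $\|Df_0^k|_{E^s}\|\le e^{-k\chi}$ rather than equality, which is all the argument needs.
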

\begin{proof}
As established in Section \ref{semisimple}, a non-trivial element $f_0$ of the (twisted) Weyl chamber flow induces both positive and negative roots, thus possessing non-trivial stable and unstable bundles. Therefore, by the splitting of the Lie algebra, $f_0$ is $r$-normally hyperbolic with respect to the foliation $\mathcal{W}^c_{f_0}$ given by the cosets of $G^c$. Applying Theorem \ref{leafconj} directly yields the desired properties for $f$.
\end{proof}
\subsubsection{Accessibility}
Now we recall some definitions and facts about $su$-holonomies of partially hyperbolic diffeomorphisms.

An \emph{$su$-path} of $f$ is a piecewise $C^1$ curve $\gamma:[0,1]\to M$ such that each segment of the curve is contained within a stable or unstable leaf. More precisely, there exists an integer $k \ge 1$ and points $0=t_0<t_1<t_2<\cdots<t_k=1$ such that for each $i \in \{0, \dots, k-1\}$, $\gamma([t_i,t_{i+1}])$ is contained in either $\mathcal{W}^s(\gamma(t_i))$ or $\mathcal{W}^u(\gamma(t_i))$. Such an $su$-path is said to be $k$-legged. We sometimes denote an $su$-path by its sequence of intermediate points, e.g., $[\gamma(0),\gamma(t_1),\dots,\gamma(t_k)]$.

\begin{defi}[Accessibility]
A partially hyperbolic diffeomorphism $f$ is said to be \emph{accessible} if for any two points $x,y\in M$, there exists an $su$-path from $x$ to $y$.
\end{defi}

For partially hyperbolic systems with center dimension $1$ or $2$, accessibility is known to be an open property (see \cite{Didier} and \cite{AV}). Stable accessibility (meaning accessibility persists under $C^1$-small perturbations) has also been shown to be $C^r$-dense for $r\ge 1$ for center dimension $1$ in \cite{RHU}, and $C^1$-dense for arbitrary center dimension in \cite{DW}. Furthermore, if the center foliation of a partially hyperbolic diffeomorphism $f$ is smooth, then accessibility is $C^1$-open around $f$ by Proposition 1.4 of \cite{GPS}.

For our specific case, it can be shown that $f_0$ is accessible due to its underlying Lie algebra structure; see Corollary 9.10 of \cite{VinWang}. Since the center foliation is (transversely) smooth for $f_0$, we can apply Proposition 1.4 in \cite{GPS} to conclude that accessibility is an open property around $f_0$.

\begin{prop}\label{faccess}
Let $f_0$ be a non-trivial element of the (twisted) Weyl chamber flow in $X=G/\Gamma$. Then any small enough $C^1$-perturbation $f\in \mathrm{Diff}^\infty(X)$ of $f_0$ is accessible. Furthermore, there exist universal constants $C,N$ that depend only on $X$, such that for any $x,y\in X$, there exists an $f$-$su$-path of at most $N$ legs and of length $\le C$ connecting $x$ and $y$.
\end{prop}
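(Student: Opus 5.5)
We prove accessibility of $f$ and the uniform bound on the number and length of legs by starting from the algebraic model $f_0$ and transferring to $f$ via continuity of the stable and unstable foliations. The first step is a uniform accessibility statement for $f_0$. Corollary 9.10 of \cite{VinWang} gives accessibility of $f_0$. Concretely, the Lie subalgebra generated by $E^s_{f_0}\oplus E^u_{f_0}$ is all of $\mathfrak g$: it contains the root spaces $\mathfrak g_r$ with $r(a)\neq 0$, their brackets, and the non-trivial weight spaces. Hence any $g$ in a neighborhood of the identity in $G$ can be written as a product of at most $N_0$ elements of the form $\exp(v_i)$ with $v_i\in E^s_{f_0}\cup E^u_{f_0}$ and $|v_i|\le 1$. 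Left multiplication by such a product, applied in reverse order, gives an $su$-path of $f_0$ from $x$ to $gx$. Since $X$ is compact, covering $X$ by finitely many such neighborhoods yields constants $N_1,C_1$ such that any two points of $X$ are joined by an $f_0$-$su$-path with at most $N_1$ legs and length at most $C_1$.

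The second step is to make this robust, following the approach of Proposition 1.4 of \cite{GPS} and its proof. Because the center foliation of $f_0$ is smooth, the $su$-paths above can be organized so that the endpoint map has nonzero degree. Fix $x$ and $N_0$-legged paths whose leg vectors lie in fixed bounded subsets of $E^s$ and $E^u$. The endpoint map $\Phi_x^{f_0}:\prod_i B_i\to X$, where $B_i$ is a ball in a stable or unstable leaf, is then a submersion near a suitable parameter, with image containing a ball of uniform radius $\delta$ around $x$. In fact one can arrange it to be a local diffeomorphism onto a neighborhood of $x$ after restricting to a transverse slice. For $f$ $C^1$-close to $f_0$, the local stable and unstable leaves of $f$ are $C^0$-close to those of $f_0$, and are uniformly $C^1$ as embedded discs by Proposition \ref{fparhyp}. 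The corresponding endpoint map $\Phi_x^{f}$ is therefore $C^0$-close to $\Phi_x^{f_0}$, uniformly in $x$. A degree argument then shows that the image of $\Phi_x^f$ still contains the ball $B(x,\delta/2)$.

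The third step is to chain these local statements. Any $x,y\in X$ are joined by a chain of at most $K=\lceil \mathrm{diam}(X)/(\delta/4)\rceil$ points with consecutive distances less than $\delta/2$. Concatenating the local $f$-$su$-paths gives an $f$-$su$-path with at most $N=KN_0$ legs. Each leg of $f$ has length comparable to the corresponding leg of $f_0$ because the leaves are uniformly $C^1$, so the total length is bounded by some $C$. These constants depend only on $X$ and $f_0$, and not on the particular perturbation $f$ once it is close enough.

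The main obstacle is the second step: showing that the local endpoint map survives perturbation even though the foliations of $f$ are only Hölder transversally. Continuity of leaves alone does not give a submersion. The plan is therefore to rely on a topological degree argument rather than any derivative of the endpoint map. This requires constructing a parameter domain, namely a closed ball in the parameter space for the $f_0$-paths, on whose boundary $\Phi^{f_0}_x$ stays at distance at least $\delta$ from $x$ and has nonzero degree around $x$. For the model $f_0$ this follows from the smooth Lie-theoretic description. Smoothness of the center foliation of $f_0$ enters when reducing to the quotient transverse to the center, as in \cite{GPS}.
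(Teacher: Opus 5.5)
Your proposal is correct and follows essentially the same route as the paper, which cites Corollary 9.10 of \cite{VinWang} for accessibility of $f_0$ and Proposition 1.4 of \cite{GPS} (using the smooth center foliation of $f_0$) for $C^1$-openness. You unpack the GPS-style degree (engulfing) argument for the endpoint map and add the explicit chaining step that produces the uniform bounds $C,N$, which the paper leaves implicit.
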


\subsubsection{Center-bunching and ergodicity}
We now recall the property of center-bunching and its relation with ergodicity of a partially hyperbolic system.

\begin{defi}[Center Bunching]
We say that a partially hyperbolic diffeomorphism $f:M\to M$ is \emph{center $r$-bunched} if $f$ is r-normally hyperbolic, and there exists an integer $k \ge 1$ such that the following conditions hold:
\begin{align*}
\sup_{x\in M} \|D_xf^k|_{E^s}\|\cdot \|(D_xf^k|_{E^c})^{-1}\|\cdot \|D_xf^k|_{E^c}\|^r &<1, \text{ and } \\
\sup_{x\in M} \|(D_xf^k|_{E^u})^{-1}\|\cdot \|(D_xf^k|_{E^c})^{-1}\|^r\cdot \|D_xf^k|_{E^c}\| &<1.
\end{align*}
We say that $f$ is \emph{center bunched} if it is center $1$-bunched.
\end{defi}

The following theorem of Burns and Wilkinson \cite{BWerg} shows that partial hyperbolicity, center bunching, and accessibility together imply ergodicity.

\begin{thm}[Theorem 0.1, \cite{BWerg}]\label{accerg}
Let $f$ be a $C^2$, partially hyperbolic,  diffeomorphism that is center bunched and accessible, and suppose $f$ preserves a measure $\mu$ that is equivalent to the Lebesgue measure. Then $f$ is ergodic with respect to $\mu$.
\end{thm}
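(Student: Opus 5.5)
This is Theorem 0.1 of \cite{BWerg}, so I would quote it rather than reprove it. If a proof were required, I would follow the Hopf argument strengthened by accessibility. The goal is to show that every $f$-invariant $L^2$ function $\varphi$ is $\mu$-a.e.\ constant. By density it suffices to treat Birkhoff averages of continuous functions. For continuous $\psi$, the forward average $\psi^+$ is constant along stable leaves and the backward average $\psi^-$ is constant along unstable leaves, because forward (resp.\ backward) orbits of points on a common stable (resp.\ unstable) leaf converge exponentially. Since $\psi^+=\psi^-$ $\mu$-a.e., the task reduces to the following claim: a measurable set $A$ that is invariant and essentially saturated by both $\mathcal W^s$ and $\mathcal W^u$ has measure $0$ or $1$.

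\textbf{Step 1 (Fubini across leaves).} Pass from essential $s$- and $u$-saturation to genuine saturation along $su$-paths. The foliations $\mathcal W^s,\mathcal W^u$ are absolutely continuous, and $\mu$ is equivalent to Lebesgue, so a set that is essentially $s$-saturated is, up to null sets, a union of full stable leaves, and similarly for $u$. The difficulty is that these two modifications of $A$ differ by null sets. Since $\mathcal W^s$ and $\mathcal W^u$ are not jointly integrable, the classical Hopf trick does not directly yield a set saturated by both.

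\textbf{Step 2 (julienne density points).} This is the main obstacle. One introduces ``juliennes'' $J_n(x)$: neighborhoods of $x$ whose shape is adapted to $f^{\pm n}$ in the center, stable and unstable directions. Their center radius shrinks at a rate dominated by the stable and unstable contraction, and center bunching is exactly what makes this possible. One then proves two facts.
\begin{enumerate}
\item The juliennes form a density basis: Lebesgue-a.e.\ point of $A$ is a julienne density point.
\item The set of julienne density points of an essentially $s$-saturated set is genuinely $s$-saturated, and likewise for $u$.
\end{enumerate}
The second fact uses that stable holonomy between center-unstable transversals maps juliennes to juliennes of comparable size, up to uniform constants. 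This requires $C^2$ regularity, so that the holonomies are Hölder with controlled Jacobians, and it requires center bunching, so that the holonomy distortion along the center is beaten by the hyperbolic rates. I would expect the bulk of the work to be these distortion estimates, together with a Vitali-type covering argument for the density-basis property.

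\textbf{Step 3 (conclusion).} Let $\hat A$ be the set of julienne density points of $A$. By Step 2, $\hat A$ is simultaneously $s$- and $u$-saturated, hence saturated under all $su$-paths. By accessibility it is therefore empty or all of $M$. Since $\mu(\hat A\,\triangle\, A)=0$, we conclude $\mu(A)\in\{0,1\}$, which gives ergodicity.
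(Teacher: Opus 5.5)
Citing \cite{BWerg} is exactly what the paper does: Theorem~\ref{accerg} is stated without proof, so your primary plan matches. The sketch you append, however, has a genuine gap where Steps 2 and 3 meet.

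You treat ``the juliennes'' as a single family whose density points are both $s$- and $u$-saturated. Burns--Wilkinson's juliennes are adapted to one time direction. The sets whose density points are shown to be $s$-saturated are $cu$-juliennes built from the rates of $f$ along forward orbits, because $s$-holonomy is controlled by forward iteration. The $u$-statement comes from applying this to $f^{-1}$, i.e.\ with a different family of $cs$-juliennes. A forward-adapted family has no reason to be preserved by $u$-holonomy, which is only H\"older transversally and need not respect the very thin sides of such juliennes.

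So Step 2 really gives two sets: an $s$-saturated $\hat A^{cu}$ and a $u$-saturated $\hat A^{cs}$, each equal to $A$ modulo a null set. An almost-everywhere density-basis statement like your (1) cannot make them coincide pointwise, and saturation does not survive modification on null sets. Step 3 as written therefore just restates the Hopf obstruction you identified in Step 1.

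The missing ingredient is a \emph{pointwise} comparison with Lebesgue density that uses the essential saturation of $A$. For such sets, a point is a Lebesgue density point of $A$ if and only if it is a $cu$-julienne density point, and likewise if and only if it is a $cs$-julienne density point. Juliennes are highly eccentric, and this equivalence fails for general measurable sets, which is why the saturation hypothesis is essential.

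Hence both julienne density-point sets coincide with the set $\hat A$ of Lebesgue density points of $A$. That set is therefore both $s$- and $u$-saturated, and your Step 3 then goes through, with $\mu(\hat A\triangle A)=0$ supplied by the Lebesgue density theorem.

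Note also that Theorem~\ref{accerg} does not assume dynamical coherence. The center and $cu$-plaques used to build juliennes must therefore be taken from Burns--Wilkinson's fake foliations rather than from genuine center or center-unstable leaves.
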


 \subsection{Holonomy of a foliation}

Let $M$ be a smooth manifold. Let $\mathcal{F}$ be a foliation in $M$ which is a decomposition of $M$ into a collection of disjoint, connected, $C^1$-embedded submanifolds, called \emph{leaves}. For each $x \in M$, we denote the unique leaf containing $x$ by $\mathcal{F}(x)$. 

For a leaf $\mathcal{F}(x)$, we denote by $\mathcal{F}(x,R) = \{ y \in \mathcal{F}(x) : d_{\mathcal{F}(x)}(x,y) < R \}$ a bounded portion of the leaf. A \emph{flow box} of $\mathcal{F}$ is a foliated region $B \subset M$ that is locally homeomorphic to a product space, such that the leaves of $\mathcal{F}$ within $B$ correspond to one of the product factors.

A \emph{transversal} of $\mathcal{F}$ is an embedded disk $S \subset M$ such that $S$ intersects every leaf of $\mathcal{F}$ transversally. 

For two transversals $S_1, S_2$ and a flow box $B$ that intersects both, the \emph{local holonomy map} $h_{S_1, S_2}^{\mathcal{F}, B}: S_1 \cap B \to S_2$ is defined by following the plaques (leaf segments) within $B$. For general transversals $S_1, S_2$, the \emph{holonomy map} $h_{S_1, S_2}^{\mathcal{F}}: \mathrm{Dom}(h) \to S_2$ is defined on the subset $\mathrm{Dom}(h) \subset S_1$ consisting of points whose leaves also intersect $S_2$. This map is constructed by composing a finite chain of local holonomies across flow boxes covering a leafwise path between $S_1$ and $S_2$; the resulting map depends only on the homotopy class of the path within the leaf.

\subsubsection{Smoothness of $su$-holonomy}\label{sec:suhol}
In this paper, we mainly consider stable and unstable holonomies, with the center leaves serving as sections.

Suppose $f$ is a dynamically coherent, normally hyperbolic diffeomorphism with center foliation $\mathcal{W}^c$. The stable holonomy maps between center leaves are defined by following stable leaves. Specifically, for $p \in M$ and $q \in \mathcal{W}^s_f(p)$, consider a local product structure where stable leaves are transverse to center leaves. Then there exist sufficiently small neighborhoods $U(p) \subset \mathcal{W}^c_f(p)$ of $p$ and $U(q) \subset \mathcal{W}^c_f(q)$ of $q$, and a constant $R > 0$ (e.g., $R = 2 d_{\mathcal{W}^s_f}(p,q)$), such that for every $x \in U(p)$, the stable leaf segment $\mathcal{W}^s_f(x, R) = \{ y \in \mathcal{W}^s_f(x) : d_{\mathcal{W}^s_f}(x,y) < R \}$ intersects $U(q)$ in exactly one point. The \emph{local stable holonomy} is then defined by
$$
h^s_{f,p,q}: U(p) \to U(q), \qquad x \mapsto U(q) \cap \mathcal{W}^s_f(x,R).
$$
Local unstable holonomies are defined analogously. These stable and unstable holonomies along center leaves are known to possess significant smoothness properties.

\begin{thm}[Theorem B, \cite{PSW}; Theorem 1.1, Section 3.6 \cite{Saghin}]\label{c1hol}
Let $f \in \mathrm{Diff}^{r+1}(M)$, $r \ge 1$, be a normally hyperbolic, center $r$-bunched diffeomorphism of a Riemannian manifold $M$. Then the local stable and unstable holonomy maps between center leaves are uniformly $C^r$.  Moreover, the holonomy $h^s_{f,p,q}$ (where $q \in \mathcal{W}^s_f(p)$) depends $C^1$-continuously on $f$ in the $C^1$ topology on $\mathrm{Diff}^1(M)$, and depends $C^r$-continuously on the base points $p,q$ in the $C^0$ topology on $M$, and there is a uniform bound $\|Dh^s_{f,p,q}-\mathrm{Id}\|\le C_f\cdot d(p,q)$ where the constant $C_f$ only depends on $f$. Similar results hold for the unstable holonomy.
\end{thm}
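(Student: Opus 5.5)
This is a cited result (Theorem B of Pugh--Shub--Wilkinson, and Theorem 1.1 of Saghin), so the paper itself does not reprove it. If I were to prove it, I would follow the standard graph-transform/invariant-section route. The plan is to characterize the stable holonomy $h^s_{f,p,q}$ as a limit of conjugated maps. For $x\in U(p)$ we have
\[
h^s_{f,p,q}(x)=\lim_{n\to\infty} f^{-n}\circ \pi_{f^n q}\circ f^n(x),
\]
where $\pi_{z}$ is a smooth local projection onto the center plaque through $z$. This projection is taken along a fixed smooth transverse foliation close to $E^s$, for instance exponential images of a smooth approximation of $E^s$. Since $d(f^np,f^nq)\le C\mu_s^n d(p,q)$, these projections are applied at exponentially nearby center plaques. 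Each approximant $h_n:=f^{-n}\pi_{f^nq}f^n$ is $C^{r}$, because the center leaves are uniformly $C^{r+1}$ by $r$-normal hyperbolicity and Theorem \ref{leafconj}. The $C^0$ convergence $h_n\to h^s$ is immediate from the contraction along $\mathcal W^s$.

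The key step is to show that $h_n$ is Cauchy in $C^r$. The difference $h_{n+1}-h_n$ is $f^{-n}$ applied to a correction term $\pi_{f^{n+1}q}f-f\pi_{f^nq}$ at scale $\mu_s^n$. Its first derivative is pushed back by $Df^{-n}|_{E^c}$ composed with $Df^{n}|_{E^c}$. The error of size $\mu_s^n d(p,q)$ is amplified by at most $\|Df^{-n}|_{E^c}\|\,\|Df^n|_{E^c}\|$. The first center-bunching inequality gives
\[
\|Df^k|_{E^s}\|\,\|(Df^k|_{E^c})^{-1}\|\,\|Df^k|_{E^c}\|<1,
\]
so this is summable and gives $C^1$ convergence. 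The same estimate yields $\|Dh^s-\mathrm{Id}\|\le C_f d(p,q)$ by summing a geometric series whose first term is $O(d(p,q))$.

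For derivatives of order $j\le r$, I would use Fa\`a di Bruno. The $j$-th derivative of $f^{-n}\circ\cdots\circ f^n$ along center plaques is controlled by $\|(Df^n|_{E^c})^{-1}\|\,\|Df^n|_{E^c}\|^{j}$, up to lower-order terms handled inductively. The $r$-bunching hypothesis, with the power $r$ on $\|Df^k|_{E^c}\|$, is exactly what makes the series converge. This inductive higher-derivative estimate is the main obstacle. The lower-order terms must be absorbed using the already-established bounds, and this requires the fiber contraction theorem (an invariant section of a jet bundle) rather than naive summation. Concretely, I would set up the bundle of $r$-jets of maps between nearby center plaques. I would then check that the induced fiber map is a contraction precisely under $r$-bunching, and identify the invariant section with the jets of $h^s$.

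Continuity in $(p,q)$ and in $f$ follows from uniformity. Each $h_n$ depends continuously on these data, and the Cauchy estimates are uniform on a $C^1$-neighborhood of $f$, since bunching and normal hyperbolicity are $C^1$-open conditions. The limit therefore inherits the continuity. The unstable case is obtained by replacing $f$ with $f^{-1}$ and using the second bunching inequality.
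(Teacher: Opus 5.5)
The paper gives no argument of its own here; it quotes Pugh--Shub--Wilkinson and Saghin. Your sketch therefore has to stand on its own, and its central step, the $C^1$ Cauchy estimate for $h_n=f^{-n}\pi_{f^nq}f^n$, does not work under the stated hypothesis.

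You assume the one-step correction $f^{-1}\pi_{f^{n+1}q}f-\pi_{f^nq}$ has derivative of size $O(\mu_s^n d(p,q))$. But this correction is a difference of two projections, along two different transverse foliations, between the same two center plaques at distance $\delta_n\approx\mu_s^n d(p,q)$. To first order its derivative is (angle between the two transverse foliations)$\,\times\,$(relative tilt of the tangent planes of the two plaques), plus $O(\delta_n)$. The tilt is not $O(\delta_n)$: $E^c$ is in general only H\"older along $\mathcal W^s$, even inside a center-stable leaf. Along the orbit the tilt in general decays only like $\bigl(\|Df|_{E^s}\|\,\|(Df|_{E^c})^{-1}\|\bigr)^n$, which is slower than $\mu_s^n$ as soon as the center is somewhere contracted.

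Multiplying by your amplification factor $\|Df^{-n}|_{E^c}\|\,\|Df^n|_{E^c}\|$, the series is governed by $\|Df|_{E^s}\|\,\|Df|_{E^c}\|\,\|(Df|_{E^c})^{-1}\|^2$, not by the center-bunching quantity. For example, take $\|Df|_{E^s}\|=0.4$, $\|Df|_{E^c}\|=1$, $\|(Df|_{E^c})^{-1}\|=2$. These rates are dominated and center bunched ($0.8<1$), but your quantity is $1.6$. For the same reason, your derivation of $\|Dh^s-\mathrm{Id}\|\le C_f\,d(p,q)$ from a ``first term $O(d(p,q))$'' fails.

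Three smaller points also need fixing. An $s$-dimensional foliation in $M$ does not carry one center plaque onto another, so you must work inside $\mathcal W^{cs}$-leaves. The $C^0$ convergence needs domination, not just stable contraction, because $f^{-n}$ may expand along $E^c$. And $r$-normal hyperbolicity gives $C^r$ center leaves, not $C^{r+1}$.

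The missing idea is that fiber contraction is needed already at the $C^1$ level. It should be applied to the stable distribution as a whole, not to errors propagated along orbits. Inside a center-stable leaf, $E^s$ is the attracting invariant section of the graph transform induced by $Df^{-1}$ on $(\dim E^s)$-planes in $T\mathcal W^{cs}$, covering $f^{-1}$. Its fiber contraction is $\|Df|_{E^s}\|\,\|(Df|_{E^c})^{-1}\|$, and the Lipschitz constant of the inverse base map $f|_{\mathcal W^{cs}}$ is $\|Df|_{E^c}\|$. So the hypothesis of the $C^r$ section theorem is exactly the first center $r$-bunching inequality; the graph transform is built from $Df$, which is where $f\in C^{r+1}$ enters. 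The theorem makes $\mathcal W^s$ restricted to center-stable leaves a $C^r$ foliation, so its holonomy between the $C^r$ center plaques is $C^r$. The transverse tilt of the center foliation never enters.

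Your approximants fit this picture if $\mathcal T$ is chosen inside center-stable leaves. Then $h_n$ is exactly the holonomy of $f^{-n}\mathcal T$, and $C^r$ convergence of $h_n$ is the $C^r$ convergence of the iterated sections $T(f^{-n}\mathcal T)\to E^s$.

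A linear bound on $Dh^s$ can be obtained, but through the linear holonomy of $Df|_{E^c}$, using the identification ``parallel transport followed by projection along $E^s\oplus E^u$.'' There, $Df$-invariance of the splitting removes the tilt term, the one-step error is genuinely $O(d)$, and center bunching makes the series converge. You then still need to identify $Dh^s$ with this linear holonomy.

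Finally, continuity in $f$ does not follow from $C^1$-openness of the hypotheses alone. The constants in all these estimates depend on $C^2$ (indeed $C^{r+1}$) bounds on $f$, which are not uniform on a $C^1$-neighbourhood.
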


We may also define the \emph{holonomy of an $su$-path}. Given an $su$-path $\gamma = [x_0, x_1, \dots, x_k]$ of $f$ (where $x_{i+1} \in \mathcal{W}^{t(i)}_f(x_i)$ for $t(i) \in \{s, u\}$), we define the local \emph{$su$-holonomy} on a neighborhood of $x_0$ in $\mathcal{W}^c_f(x_0)$ by concatenating the sequence of stable and unstable holonomies between center leaves along $\gamma$. More concretely,
$$
h_\gamma^f = h^{t(k-1)}_{f,x_{k-1},x_k} \circ h^{t(k-2)}_{f,x_{k-2},x_{k-1}} \circ \cdots \circ h^{t(0)}_{f,x_0,x_1}: U(x_0) \to U(x_k),
$$
where $U(x_i) \subset \mathcal{W}^c_f(x_i)$ are sufficiently small neighborhoods of $x_i$ (and $x_0$ is the starting point of the path).

Let $f'$ be a sufficiently small $C^1$-perturbation of $f$, and let $\phi$ be a bi-H\"older leaf conjugacy from $f$ to $f'$. For any $su$-path $\gamma = [x_0, x_1, \dots, x_k]$ of $f$ with bounded length and number of legs, the conjugacy $\phi$ naturally induces an $su$-path $\gamma' = [\phi(x_0), \phi(x_1), \dots, \phi(x_k)]$ for $f'$, where $\phi(x_{i+1}) \in \mathcal{W}^{t(i)}_{f'}(\phi(x_i))$. As a direct application of Theorem \ref{c1hol} and the smoothness of $\phi$ along center leaves, the induced $su$-holonomies $h^{f'}_{\gamma'}$ are also smooth and depend smoothly on $f$ and $\gamma$.

\subsubsection{Existence of global holonomy}
We say that $f$ has \emph{global holonomy} in a center leaf $\mathcal{W}^c_f(x_0)$ if, for every $su$-path $\gamma$ of $f$ starting with $x_0 \in \mathcal{W}^c_f(x_0)$ and ending at $x_k \in \mathcal{W}^c_f(x_k)$, the local holonomy map $h_\gamma^f$ can be uniquely extended to a map defined on the entire leaf $\mathcal{W}^c_f(x_0)$. More precisely, this means that for any such $\gamma$, there exists a family of local $su$-holonomies $h_{\gamma_j}^f: U_j \to \mathcal{W}^c_f(x_k)$ where the small neighborhoods $\{U_j\}$ form an open cover of $\mathcal{W}^c_f(x_0)$, and these local holonomies agree on overlaps, thereby defining a global smooth map $h_\gamma^f: \mathcal{W}^c_f(x_0) \to \mathcal{W}^c_f(x_k)$.

In the case where $f$ is a $C^1$-perturbation of a partially hyperbolic affine model, the following result shows that $f$ always has global holonomy in the universal cover.

\begin{prop}[Proposition 2.8 \cite{Wang}]
    Let $f_0$ be a partially hyperbolic affine diffeomorphism on a compact homogeneous space $G/\Gamma$, then any sufficiently small $C^1$-perturbation $f$ of $f_0$ has global holonomy in the universal cover. In particular, $f$ has global holonomy in a center leaf $\W^c_f(x_0)$ if $\W^c_f(x_0)$ is simply connected.
\end{prop}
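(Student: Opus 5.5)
The statement is the last proposition quoted: a sufficiently small $C^1$-perturbation $f$ of a partially hyperbolic affine diffeomorphism $f_0$ on $G/\Gamma$ has global holonomy in the universal cover, and in particular in any simply connected center leaf. I would work on the lifts of $f$ and $f_0$ to the universal cover $\widetilde X$ of $X=G/\Gamma$. There $f_0$ lifts to an affine map of $\widetilde G$, and its center leaves are cosets of a connected subgroup $\widetilde G^c$. By Proposition \ref{fparhyp}, $f$ is dynamically coherent, and the leaf conjugacy $\phi$ lifts to a map $\tilde\phi$ that is a bounded distance from the identity and uniformly $C^r$ along center leaves. So each lifted center leaf $\widetilde{\W}^c_f(\tilde x)$ is a uniformly $C^r$ graph over the algebraic leaf $\tilde\phi^{-1}$-corresponding to it, and it is simply connected whenever the algebraic leaf is.

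First I would verify global holonomy for the model $f_0$. For $f_0$ the stable and unstable leaves are orbits of the subgroups $G^{s}$ and $G^u$, which are normalized by $G^c$. The stable holonomy from $xG^c$ to $yG^c$, with $y=ux$ and $u\in G^s$, is therefore a globally defined map of the form $z\mapsto u'z$ obtained by sliding along cosets. It is defined on the whole leaf, since $G^sG^c$ gives a global product structure in $\widetilde G$.

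Next I would pass to $f$ by a continuation argument along a single leg.

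1. Fix $p$ and $q\in\W^s_f(p)$ with $d_{\W^s}(p,q)\le R$. The local holonomy $h^s_{f,p,q}$ is given by Theorem \ref{c1hol}, with uniform domain size $\delta$ and uniform $C^1$ bounds.

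2. Extend it along any path in $\widetilde{\W}^c_f(p)$ by analytic continuation. Cover the path by $\delta$-balls, and for a nearby point $x'$ use the local holonomy between $x'$ and the unique point $\widetilde{\W}^s_f(x',R')\cap\widetilde{\W}^c_f(q)$.

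3. The key point is that this intersection exists and is unique for every $x'$ in the leaf. This is global product structure in $\widetilde X$ for the lifted foliations $\widetilde{\W}^s_f$ and $\widetilde{\W}^{c}_f$ inside $\widetilde{\W}^{cs}_f$. I would get it from the algebraic product structure of $f_0$ together with $C^0$-closeness of the foliations and uniform transversality, via a standard degree or contraction argument on a quasi-isometrically embedded plaque family.

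4. Uniqueness of the intersection shows that the continuation is independent of the path. Hence it defines a global map on $\widetilde{\W}^c_f(p)$, and concatenating legs gives global $su$-holonomy $h^f_\gamma$. The monodromy issue disappears because the continuation is pointwise canonical rather than merely path-dependent.

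5. When $\W^c_f(x_0)$ is itself simply connected, the covering projection restricted to its lift is a diffeomorphism onto the leaf. The global holonomy on the lift therefore descends, which gives the second claim.

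The main obstacle is step 3: global product structure for the perturbed foliations in the universal cover, where leaves are noncompact. The concern is that a lifted stable leaf of $f$ could drift far from its algebraic counterpart, or hit the target center leaf twice. I would handle it with the fact that $\tilde\phi$ is a bounded distance from the identity, plus the fact that algebraic stable leaves are exponentially contracted by $f_0$ while center leaves are not. Iterating $f^n$ pushes any two would-be intersection points into the same small local product chart, which forces uniqueness; existence then follows from a continuity (open and closed) argument along the center leaf.
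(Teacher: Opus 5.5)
The paper imports this statement from \cite{Wang} without proof, so I am judging your argument on its own. Reducing to global product structure of $\widetilde{\W}^s_f$ and $\widetilde{\W}^c_f$ inside lifted $cs$-leaves is the right move. Your uniqueness argument is also sound. Two intersection points lie on one stable leaf and one center leaf, and their forward iterates become ambient-close. Uniform continuity of $\tilde\phi^{\pm1}$ and properness of the algebraic center cosets then put them in a single local $c$-plaque and $s$-plaque, so they coincide.

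\textbf{The gap is existence.} Closedness in your ``open and closed'' argument is the entire difficulty. Suppose $x'_k\to x'$ in $\widetilde{\W}^c_f(p)$ and $y_k\in\widetilde{\W}^s_f(x'_k)\cap\widetilde{\W}^c_f(q)$. Nothing you state prevents $d_{\W^s}(x'_k,y_k)\to\infty$, and then there is no limit point; a degree argument would need exactly this properness.

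The tools you cite do not exclude it. The map $\tilde\phi$ is close to the identity but does not send stable leaves of $f_0$ to those of $f$. Already in the model, the holonomy is not $z\mapsto u'z$ with $u'$ fixed but $gx\mapsto(\mathrm{Ad}(g)u)\,gx$, whose stable displacement grows exponentially in the center distance (take $g$ in the Cartan subgroup). So there is no bounded $R'$ as in your step 2. Any $C^0$-closeness estimate, with errors of size $\epsilon$ per unit stable length accumulated against this exponential distortion, controls the continuation only on a bounded region. Forward iteration does not rescue this either, since center lengths of $f$ may grow like $e^{\epsilon n}$ while the model displacement over a center ball of radius $\rho$ is of order $e^{C\rho}$.

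\textbf{The missing idea} is to work on the space of center leaves of a $cs$-leaf, where the leaf conjugacy \emph{does} conjugate the dynamics. Use that the leaf conjugacy maps $cs$-leaves to $cs$-leaves and satisfies $\tilde f\tilde\phi(L)=\tilde\phi(\tilde f_0L)$ for center leaves $L$.
\begin{itemize}
\item For $x\in\widetilde X$ put $v_x=\tilde\phi^{-1}(x)$. Since $G^{cs}=G^c\ltimes G^s$, the center leaves in $\widetilde{\W}^{cs}_f(x)$ are exactly $\tilde\phi(G^cuv_x)$ for $u\in G^s$.
\item Let $I_x\subset G^s$ be the set of $u$ whose leaf meets $\widetilde{\W}^s_f(x)$. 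Uniform local product structure gives $I_x\supset B(e,r_0)$ with $r_0$ independent of $x$.
\item Write $\tilde\phi^{-1}\tilde f\tilde\phi(w)=b(w)e^aw$ with $b(w)\in G^c$ uniformly $\epsilon'$-small. Then $v_{\tilde f^nx}=c_ne^{na}v_x$ with $d(c_n,e)\le n\epsilon'$, because $\mathrm{Ad}(e^a)$ is trivial on $G^c$ (subexponential growth there suffices in general).
\item Hence $I_x=\mathrm{Ad}(c_ne^{na})^{-1}(I_{\tilde f^nx})\supset\mathrm{Ad}(e^{-na})\mathrm{Ad}(c_n^{-1})B(e,r_0)$.
\item $\mathrm{Ad}(e^{-na})$ expands $\mathfrak g^s$ at rate at least $e^{\lambda n}$, and $\mathrm{Ad}(c_n^{-1})$ shrinks vectors by a factor at most $e^{Kn\epsilon'}$. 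So $I_x$ contains balls of radius comparable to $r_0e^{(\lambda-K\epsilon')n}$ for every $n$, and $I_x=G^s$ once $f$ is close enough to $f_0$.
\end{itemize}
Combined with your uniqueness, every stable leaf in a lifted $cs$-leaf meets every center leaf exactly once, and the same holds for $cu$-leaves. This is the global holonomy, and your steps 4 and 5 then go through.
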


\subsection{Some Pesin theory}
This section reviews the Oseledets splitting theorem and fundamental concepts from Pesin theory that are utilized in this paper.

\begin{thm}[Oseledets Splitting Theorem]\label{Oseled}
Let $f\in \mathrm{Diff}^2(M)$ be a diffeomorphism on a compact Riemannian manifold $M$ that preserves an invariant Borel measure $\mu$. Then for $\mu$-almost every $x\in M$, there exists a unique measurable and $Df$-invariant decomposition $T_xM=\bigoplus_{i=1}^{m(x)} H^i_f(x)$ such that for every non-zero vector $v\in H^i_f(x)$, the \emph{Lyapunov exponent} $\chi_i(x)$ given by
\[
\chi_i(x)=\lim_{|k|\to \infty} \frac{1}{k} \log \frac{\|Df^k v\|}{\|v\|}
\]
exists and is independent of $v$.
\end{thm}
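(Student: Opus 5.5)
The plan is the standard route to the multiplicative ergodic theorem, applied to the derivative cocycle $A(x)=D_xf$ over $(f,\mu)$. After trivializing $TM$ measurably, $A$ becomes a measurable map $M\to\mathrm{GL}_d(\R)$ with cocycle iterates $A_k(x)=D_xf^k$. Since $f$ is $C^1$ (indeed $C^2$) and $M$ is compact, $\log^+\|A^{\pm1}\|$ is bounded, so all integrability hypotheses hold trivially. I do not assume ergodicity: the exponents $\chi_i(x)$ and the number $m(x)$ are only claimed to be $f$-invariant measurable functions. If needed, I would pass to the ergodic decomposition at the end.

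\textbf{Step 1: exponents from Kingman.} For each $1\le j\le d$, the sequence $\log\|\Lambda^jD_xf^k\|$ is subadditive along orbits. Kingman's subadditive ergodic theorem gives a.e. limits $\frac1k\log\|\Lambda^jD_xf^k\|\to s_j(x)$. Setting $\chi^{(1)}\ge\dots\ge\chi^{(d)}$ with $\chi^{(1)}+\dots+\chi^{(j)}=s_j$ recovers the asymptotic growth rates of the singular values of $D_xf^k$.

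\textbf{Step 2: forward filtration.} This is the step I expect to be the main obstacle. Following Raghunathan's argument, I show that $\bigl((D_xf^k)^*D_xf^k\bigr)^{1/2k}$ converges a.e. The key estimate compares the singular directions of $D_xf^{k}$ and $D_xf^{k+1}$. Because $\frac1k\log\|D_{f^kx}f\|\to0$, the angle between the corresponding singular subspaces for distinct exponents decays like $e^{-k(\chi^{(i)}-\chi^{(j)}-\epsilon)}$. These estimates are summable, so the singular subspaces converge. This yields a measurable forward filtration $V_1(x)\supsetneq V_2(x)\supsetneq\cdots$ such that vectors in $V_i\setminus V_{i+1}$ have forward exponent exactly $\chi_i$. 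The filtration is $Df$-invariant by the cocycle identity $D_xf^{k+1}=D_{fx}f^k\,D_xf$.

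\textbf{Step 3: backward filtration and splitting.} Applying Steps 1--2 to $f^{-1}$ gives a backward filtration with exponents $-\chi_i$. I then set
\[
H^i_f(x)=V_i(x)\cap V^-_{m(x)-i+1}(x).
\]
Transversality of the two filtrations, and hence the direct-sum decomposition, follows from a volume argument. The exterior-power growth rates $s_j$ force a complementary pair to span with angle decaying only subexponentially along the orbit; this uses Poincar\'e recurrence together with the tempering of $\log\|Df^{\pm1}\|$. On $H^i_f(x)$ both one-sided limits equal $\chi_i$, so the two-sided limit as $|k|\to\infty$ exists and is independent of $v$.

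\textbf{Step 4: invariance and uniqueness.} Invariance of the splitting follows from invariance of both filtrations. Uniqueness holds because any decomposition with these properties must satisfy
\[
H^i_f(x)=\{v:\ \text{forward exponent}\le\chi_i\ \text{and backward exponent}\ge\chi_i\},
\]
which is intrinsically determined by $Df$.
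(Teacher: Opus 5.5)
The paper does not prove this statement. It quotes Oseledets' theorem as classical background, and the remark after it points to \cite{introdyn}, Supplement 2. So there is no in-paper argument to compare with. Your outline is the standard Raghunathan-style proof and is a correct route:
\begin{itemize}
\item Kingman's theorem on $\log\|\Lambda^j D_xf^k\|$ gives the exponents;
\item convergence of $\bigl((D_xf^k)^*D_xf^k\bigr)^{1/2k}$ gives the forward filtration;
\item the same construction for $f^{-1}$ gives the backward filtration;
\item the splitting is $H^i_f=V_i\cap V^-_{m-i+1}$.
\end{itemize}
Your Step 3 is best read as follows. Volume growth together with Poincar\'e recurrence gives a subexponential lower bound on the angle between $V^-_{m-i+1}$ and $V_{i+1}$ at infinitely many times along the backward orbit. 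A positive angle at a single time transfers to $x$, because both filtrations are $Df$-invariant and $Df^n$ is a linear isomorphism. It is this transversality, not membership in $V_i\cap V^-_{m-i+1}$ alone, that forces both one-sided exponents on $H^i_f$ to equal $\chi_i$. Compared with the citation, your argument is self-contained and needs only $f\in C^1$. The citation buys brevity and the cocycle and abelian-action generality that the paper's remark mentions.

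Two details deserve an explicit line if you write this out.

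First, identifying the exponents of $f^{-1}$ with the $-\chi_i$, with the same multiplicities, is not literally ``Steps 1--2 applied to $f^{-1}$''. The identity $\|\Lambda^jD_xf^{-k}\|=\|\Lambda^{d-j}D_{f^{-k}x}f^k\|/|\det D_{f^{-k}x}f^k|$ involves the moving base point $f^{-k}x$. You need the following facts, for each $\ell$:
\begin{itemize}
\item the sequence $\log\|\Lambda^{\ell}D_{f^{-k}x}f^k\|$ is subadditive over $f^{-1}$;
\item divided by $k$, it converges in $L^1$ to the $f$-invariant function $s_\ell$, by invariance of $\mu$;
\item hence its a.e.\ Kingman limit is $s_\ell$.
\end{itemize}
Taking $\ell=d-j$ and $\ell=d$ then gives backward growth rates $s_{d-j}-s_d$, as required.

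Second, the uniqueness in Step 4, like the statement itself, tacitly requires the exponents on distinct summands to be distinct. Without that, uniqueness fails: for $f=\mathrm{id}$ every splitting qualifies. With that convention your characterization works. For $v=\sum_j v_j$, the forward exponent is $\max\{\lambda_j: v_j\neq 0\}$ and the backward exponent, normalized as $\lim_{k\to-\infty}\frac1k\log\|Df^kv\|$, is the minimum. Because the $\lambda_j$ are distinct, the dominant term beats the others outright, so no angle control is needed.
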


\begin{rmk}
If $f$ is further assumed to be ergodic, then the Lyapunov exponents $\chi_i$, the dimensions $\dim H^i_f(x)$, and the number of distinct exponents $m(x)=m$ are almost everywhere constant in $M$. In this case, we can order the exponents as $\chi_1 < \chi_2 < \cdots < \chi_m$.

The Oseledets splitting theorem generalizes to arbitrary measurable cocycles (not just the derivative cocycle) and to abelian group actions; for a detailed exposition, see \cite{introdyn}, Supplement 2.
\end{rmk}

Based on the Oseledets splitting of a diffeomorphism $f$, we define the subbundles corresponding to negative and positive exponents:
\[
E^-_f(x)=\bigoplus_{\chi_i(x)<0} H^i_f(x), \qquad
E^+_f(x)=\bigoplus_{\chi_i(x)>0} H^i_f(x).
\]
For $\mu$-almost every regular point $x$, Pesin theory associates to
$E^-_f(x)$ and $E^+_f(x)$ local stable and unstable manifolds tangent
to these subspaces at $x$.

The \emph{Pesin stable manifold} of $x$ is defined as
\[
\mathcal{W}^-_f(x)=\left\{y\in M : \limsup_{n\to \infty}\frac{\log d(f^n(x),f^n(y))}{n}<0\right\},
\]
and the \emph{Pesin unstable manifold} of $x$ as
\[
\mathcal{W}^+_f(x)=\left\{y\in M : \limsup_{n\to \infty}\frac{\log d(f^{-n}(x),f^{-n}(y))}{n}<0\right\}.
\]

\begin{thm}[Pesin Stable Manifold Theorem {\cite{Pes}}]\label{pesin}
Let $M$ be a compact Riemannian manifold and let $f:M\to M$ be a $C^2$
diffeomorphism preserving an invariant measure $\mu$. Then for
$\mu$-almost every regular point $x$, if
$
k=\dim E^-_f(x),$ 
there exists a $C^{1+\beta}$ embedded $k$-dimensional disk
$\mathcal{W}^-_{f,\mathrm{loc}}(x)$ centered at $x$ such that
$
T_x\mathcal{W}^-_{f,\mathrm{loc}}(x)=E^-_f(x),
$
and
$
\mathcal{W}^-_{f,\mathrm{loc}}(x)\subset \mathcal{W}^-_f(x).
$
Similarly, there is a local unstable manifold
$\mathcal{W}^+_{f,\mathrm{loc}}(x)$ tangent to $E^+_f(x)$ at $x$.
\end{thm}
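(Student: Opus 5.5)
This is the Pesin stable manifold theorem, a classical result quoted from \cite{Pes}. I would prove it by the standard Hadamard--Perron graph transform carried out in Lyapunov charts, and then globalize using the definition of $\mathcal W^-_f(x)$.

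\textbf{Step 1: Lyapunov charts.} Fix $\varepsilon>0$ small compared with the gap between $0$ and the nonzero exponents. Using Theorem \ref{Oseled}, define on $\mu$-a.e.\ regular point a Lyapunov (adapted) norm
\[
\|v\|'_x=\sum_{n\in\mathbb Z} e^{-\chi n-\varepsilon|n|}\|Df^n v\|
\]
and its analogue on $E^+_f(x)\oplus E^0_f(x)$. In this norm $Df$ contracts $E^-$ by at least $e^{-\chi+\varepsilon}$ and dominates it uniformly. The comparison constant $C_\varepsilon(x)$ between $\|\cdot\|'$ and $\|\cdot\|$ is a tempered function, so $C_\varepsilon(f^n x)\le C_\varepsilon(x)e^{\varepsilon|n|}$. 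Through the exponential map one obtains charts of size $r(x)$, also tempered, in which $f$ reads as $\tilde f_x = Df_x + R_x$. Since $f$ is $C^2$, the Lipschitz constant of $R_x$ on the ball of radius $r(f^nx)$ is at most $\delta$, uniformly along the orbit.

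\textbf{Step 2: Graph transform.} Consider the space of Lipschitz-$1$ graphs $\psi_n:E^-(f^nx)\cap B\to (E^+\oplus E^0)(f^nx)$. The backward graph transform $\Gamma_n=\tilde f_{f^nx}^{-1}$ maps such graphs into themselves, by domination. It is a contraction in the $C^0$ metric, with rate about $e^{-(\chi-2\varepsilon)}$. The fixed point of the sequence of transforms gives a graph $W_{\mathrm{loc}}$ through $x$ whose points converge exponentially under forward iterates, and hence lie in $\mathcal W^-_f(x)$. Tangency at $x$ to $E^-_f(x)$ follows from the invariant cone argument applied to the linear parts.

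\textbf{Step 3: Regularity.} To get $C^{1+\beta}$ smoothness, apply the fiber contraction theorem to the induced action on $1$-jets (the tangent-plane field). The Hölder exponent $\beta$ comes from the $C^{1+\text{Lip}}$ regularity of $f$ together with the exponent gap; this is where one needs $f\in C^2$. Dimension $k$ is immediate from the graph construction.

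\textbf{Main obstacle.} The hard part is the nonuniformity. All constants, including chart sizes and angles between subspaces, degrade along the orbit. Controlling this requires the tempering lemma: if $\log C(f x)-\log C(x)$ is integrable, then $C$ grows at most subexponentially along orbits. With tempering, the $\varepsilon$-loss at each step is absorbed by the exponent gap, which is what makes the contraction estimates in Steps 2 and 3 uniform. The unstable case follows by applying the same argument to $f^{-1}$.
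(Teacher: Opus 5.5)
The paper gives no proof of this theorem; it quotes it from \cite{Pes}. Your outline (adapted norms, tempering, graph transform, fiber contraction) is the classical route. However, Step 2 fails as written, and it fails exactly in the case the statement is meant to cover.

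You claim that the backward graph transform on Lipschitz-$1$ graphs over $E^-$ is a contraction in the $C^0$ metric, with a rate coming from the exponent gap. In the sup metric, the contraction factor of this transform is only bounded by roughly $\|(Df|_{E^0\oplus E^+})^{-1}\|'$. That is, it is governed by how strongly the complementary direction is expanded, not by the gap. When $E^0_f(x)\neq 0$, this factor is about $e^{\varepsilon}\geq 1$. The statement allows this case, and the paper needs it: the maps $g\in\mathcal{CZ}_0(f)$ to which Pesin theory is applied have zero exponents along their center.

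The linear model $(u,v)\mapsto(\lambda u,v)$ with $0<\lambda<1$ makes the failure concrete. There the transform is $\psi\mapsto\psi(\lambda\,\cdot)$. Every constant graph $v\equiv c$ is a fixed point, so there is no uniqueness at all. Even among graphs with $\psi(0)=0$, taking $\psi(u)=\min(|u|,\lambda r)$ and $\phi=0$ on the ball of radius $r$ shows that the sup-norm Lipschitz constant of the transform is exactly $1$.

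The missing ingredient has two parts. First, restrict to graphs through the origin of each chart, $\psi_n(0)=0$, i.e. through $f^nx$. Second, use a weighted metric such as $d(\psi,\phi)=\sup_{u\neq0}\|\psi(u)-\phi(u)\|/\|u\|$. Since $\tilde f$ shrinks the $E^-$-coordinate of points on such graphs by at most $e^{\chi_-+\varepsilon}+2\delta$, you then get
$d(\Gamma_n\psi,\Gamma_n\phi)\le \frac{e^{\chi_-+\varepsilon}+2\delta}{e^{-\varepsilon}-2\delta}\,d(\psi,\phi)\approx e^{\chi_-+2\varepsilon}\,d(\psi,\phi)$,
where $\chi_-<0$ is the largest negative exponent and $\delta$ is your Lipschitz bound on $R_x$. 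This is the rate you had in mind. Alternatively, run Perron's method in exponentially weighted sequence spaces. Step 3 needs the same repair, because the fiber contraction theorem requires a contracting base map.

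There are also two smaller corrections. With a single $\chi$, the two-sided sum defining $\|\cdot\|'_x$ diverges on $E^-_f(x)$ once that space carries two distinct exponents. Either define the norm on each $H^i_f$ separately, or use the one-sided sums $\sum_{n\ge0}e^{-(\chi_-+\varepsilon)n}\|Df^nv\|$ on $E^-$ and $\sum_{n\ge0}e^{-\varepsilon n}\|Df^{-n}v\|$ on $E^0\oplus E^+$. Finally, for $\chi<0$ the contraction factor on $E^-$ is $e^{\chi+\varepsilon}$, not $e^{-\chi+\varepsilon}$.
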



We also recall the absolute continuity properties of Pesin stable and unstable manifolds, which we shall use to prove ergodicity of some elements of the centralizer.

\begin{thm}[Absolute continuity of Pesin stable/unstable manifolds
{\cite{Pes}}]\label{pesabcont}
Let $f$ be a $C^2$ volume-preserving diffeomorphism of a compact
Riemannian manifold $M$. Then the family of local stable manifolds
$\{\mathcal W^-_{f,\mathrm{loc}}(x)\}$ is absolutely continuous when restricted to a set where the dimension of
$E^-_f$ is constant.
More precisely, if $S_1$ and $S_2$ are sufficiently small smooth disks
transverse to these local stable manifolds inside a foliated box, and
if $h:D_1\subset S_1\to D_2\subset S_2$ denotes the stable holonomy map,
then $h$ is absolutely continuous with respect to the induced
Riemannian measures on $S_1$ and $S_2$. The analogous statement holds
for the local unstable manifolds.
\end{thm}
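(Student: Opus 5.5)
The plan is the classical Pesin--Brin--Stuck argument. I would first reduce to a uniform situation on Pesin sets, then realize the holonomy as a limit of holonomies between forward images of the transversals, and finally show that the Jacobians of these approximating maps converge.

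\emph{Step 1: reduction to a Pesin set.} Fix $\ell$ and consider the Pesin set $\Lambda_\ell$. On it, $\dim E^-_f$ is constant, the Oseledets splitting depends continuously on the point, and the hyperbolicity constants are uniform up to a tempered error: for $v\in E^-_f(x)$,
\[
\|Df^n v\|\le \ell e^{-(\chi-\varepsilon)n}\|v\|, \qquad \angle(E^-_f,E^+_f)\ge \ell^{-1},
\]
and along the orbit these constants degrade at most like $e^{\varepsilon n}$. By the Pesin Stable Manifold Theorem~\ref{pesin}, the local stable manifolds $\mathcal W^-_{f,\mathrm{loc}}(x)$ for $x\in\Lambda_\ell$ have uniform size, are uniformly $C^{1+\beta}$, and depend continuously on $x$ in the $C^1$ topology. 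It then suffices to treat the holonomy $h$ restricted to $D_1=S_1\cap\bigcup_{x\in\Lambda_\ell\cap B}\mathcal W^-_{f,\mathrm{loc}}(x)$ for each $\ell$. Since $\mu(\bigcup_\ell\Lambda_\ell)=1$, this covers almost everything.

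\emph{Step 2: approximating holonomies.} Push the picture forward by $f^n$. The disks $f^nS_1$ and $f^nS_2$ remain transverse to $E^-$, and along the relevant points they become $C^1$-close to unstable-type graphs in Lyapunov charts; this is an inclination-lemma or graph-transform argument. Points joined by the stable holonomy get exponentially close: $d(f^nx,f^nh(x))\le C_\ell e^{-(\chi-2\varepsilon)n}$. Define $h_n$ as the holonomy from $f^nS_1$ to $f^nS_2$ along a smooth local foliation transverse to these disks, for instance straight segments in the Lyapunov chart in the $E^-$ direction. Pulling back gives
\[
\tilde h_n=f^{-n}\circ h_n\circ f^n .
\]
Each $\tilde h_n$ is a $C^1$ homeomorphism onto its image, and $\tilde h_n\to h$ uniformly on $D_1$.

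\emph{Step 3: Jacobian estimate, the main obstacle.} I expect the key and hardest step to be showing that the Jacobians $\operatorname{Jac}\tilde h_n$ are uniformly bounded above and below on $D_1$. By the chain rule,
\[
\operatorname{Jac}\tilde h_n(x)=\operatorname{Jac}h_n(f^nx)\prod_{k=0}^{n-1}\frac{\operatorname{Jac}\big(Df|_{T f^kS_1}\big)(f^kx)}{\operatorname{Jac}\big(Df|_{Tf^kS_2}\big)\big(f^k\tilde h_n x\big)} .
\]
The factor $\operatorname{Jac}h_n(f^nx)$ tends to $1$ because the images of the two disks become $C^1$-close. Since $f\in C^2$, $\log\operatorname{Jac}Df|_E$ is Lipschitz in both the point and the plane $E$. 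Hence each term of the logarithm of the product is bounded by $C\big(d(f^kx,f^k\tilde h_nx)+\angle(T f^kS_1,Tf^kS_2)\big)$. The distance term decays like $e^{-(\chi-2\varepsilon)k}$. The angle term also decays exponentially, because the tangent planes of both pushed-forward disks converge to $E^+$ along nearby orbits.

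The difficulty is non-uniformity: the intermediate points $f^kx$ leave $\Lambda_\ell$, so chart sizes and constants vary. I would handle this with Lyapunov (tempered) charts. In these charts the constants at time $k$ are at most $\ell e^{\varepsilon k}$, so the series is dominated by $\sum_k \ell e^{-(\chi-3\varepsilon)k}<\infty$ once $\varepsilon\ll\chi$. This yields the bounds
\[
K_\ell^{-1}\le \operatorname{Jac}\tilde h_n\le K_\ell ,
\]
uniformly in $n$.

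\emph{Step 4: passage to the limit.} For any compact $F\subset D_1$ and any small open neighborhood $U\supset F$ in $S_1$, the change-of-variables formula gives $m_{S_2}(\tilde h_n(F))\le K_\ell\, m_{S_1}(U)$. Letting $n\to\infty$ and using uniform convergence $\tilde h_n\to h$, we get $m_{S_2}(h(F))\le K_\ell\, m_{S_1}(F)$, and symmetrically with the roles of $S_1$ and $S_2$ exchanged. Therefore $h$ maps null sets to null sets in both directions, which is the claimed absolute continuity. The unstable case follows by applying the same argument to $f^{-1}$. Volume preservation is used only to guarantee that Pesin sets of positive measure exist and to make the statement about almost every point meaningful.
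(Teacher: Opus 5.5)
The paper gives no proof of this statement; it is quoted from Pesin \cite{Pes}. Your Steps 1--3 follow the classical Pesin/Barreira--Pesin outline: restrict to Pesin sets, use approximating holonomies $\tilde h_n=f^{-n}\circ h_n\circ f^n$, and show the distortion series is summable in Lyapunov charts. Two imprecisions there are harmless.
\begin{enumerate}
\item If $f$ has zero exponents, the transversals have dimension $\dim M-\dim E^-_f$. Their pushed-forward tangent planes then converge to $E^0\oplus E^+$, not to $E^+$, at a rate set by the largest negative exponent.
\item The comparison of $T_{f^kx}f^kS_1$ with $T_{f^k\tilde h_nx}f^kS_2$ should be made in the Lyapunov chart of the regular orbit $f^kz$, where $x\in\mathcal W^-_{f,\mathrm{loc}}(z)$ and $z\in\Lambda_\ell$. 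The point $f^k\tilde h_nx$ need not be regular.
\end{enumerate}

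The genuine gap is in Step 4. Step 3 bounds $\operatorname{Jac}\tilde h_n$ only at points of $D_1$, i.e.\ points on local stable manifolds of $\Lambda_\ell$. This is typically a nowhere dense, Cantor-like subset of $S_1$. At points of $U\setminus D_1$ there is no exponential convergence $d(f^kx,f^k\tilde h_nx)\to 0$ to feed into the distortion estimate, so the change of variables over $U$ is not controlled. The inequality you actually have is $m_{S_2}(\tilde h_n(F))\le K_\ell\, m_{S_1}(F)$. From this and uniform convergence you cannot conclude $m_{S_2}(h(F))\le K_\ell\, m_{S_1}(F)$. Indeed $\tilde h_n(F)\to h(F)$ in the Hausdorff metric, and Lebesgue measure is only upper semicontinuous along such limits: this gives $m_{S_2}(h(F))\ge\limsup_n m_{S_2}(\tilde h_n(F))$, the opposite direction. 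Your version would need Jacobian bounds on a whole open neighborhood of $F$, plus a degree argument giving $h(F)\subset\tilde h_n(U)$, and neither is supplied.

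The repair is to use the lower bound instead. Since $\tilde h_n$ is injective and $C^1$,
\[
m_{S_2}(h(F))\;\ge\;\limsup_{n\to\infty} m_{S_2}(\tilde h_n(F))\;=\;\limsup_{n\to\infty}\int_F \operatorname{Jac}\tilde h_n\,dm_{S_1}\;\ge\;K_\ell^{-1}\,m_{S_1}(F).
\]
Now run the same argument for the inverse holonomy $S_2\to S_1$. Its domain points also lie on $\mathcal W^-_{f,\mathrm{loc}}(z)$ with $z\in\Lambda_\ell$, so Step 3 applies verbatim and gives $m_{S_1}(F)\ge K_\ell^{-1}m_{S_2}(h(F))$. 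Inner regularity then shows that both $h$ and $h^{-1}$ send null sets to null sets, and a countable union over $\ell$ finishes the proof.

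This route uses injectivity of $\tilde h_n$ near $F$ essentially. So $h_n$ should be the holonomy of one smooth foliation transverse to $f^nS_1$ and $f^nS_2$ near $f^nF$, not a patchwork of segments taken in different Lyapunov charts.
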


In particular, if a measurable set has zero measure in $M$, then its intersection
with almost every local Pesin stable leaf has zero measure with respect to the Riemannian volume restricted to the leaves; conversely, if
a measurable set has zero leafwise measure on almost every local leaf,
then it has zero measure in $M$.

\section{The center-fixing centralizer}\label{sec:czf=zf}

Let $f$ be a perturbation of the (twisted) Weyl chamber flow. Denote
\[
\CZ(f)=\{g\in \mathcal Z(f):\; g(x)\in \W^c_f(x)\ \text{for all }x\in X\},
\]
the center-fixing part of the centralizer of $f$.

In this section, we show that $\mathcal Z(f)/\CZ(f)$ injects into the discrete group $\mathrm{Out}(\Gamma)$, following the method of
\cite{Witte}.

In the case when $G$ is the semisimple homogeneous space, we use the following result from \cite{Wang}.

\begin{thm}[Theorem 4.1, Corollary 4.3 of \cite{Wang}, see also Theorem 1.1 of \cite{Witte}]
    Let $X=G/\Gamma$ be a semisimple homogeneous space. Let $f_0$ be a non-trivial element of the Weyl chamber flow, and let $f\in \mathrm{Diff}^\infty(X)$ be a sufficiently small $C^1$-perturbation of $f_0$. Then we have
$$|\mathcal{Z}^r(f)/\CZ^r(f)|<C,$$
where $C$ is a constant depending only on $G$.
\end{thm}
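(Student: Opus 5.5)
The plan is to follow Witte's approach to centralizers of affine maps, combined with leaf conjugacy and the rigidity of lattices. Let $\phi$ be the bi-H\"older leaf conjugacy from $f_0$ to $f$ given by Proposition \ref{fparhyp}. For $g\in\mathcal Z^r(f)$, the map $g$ preserves $\W^s_f$, $\W^u_f$ and $\W^c_f$. Indeed, $g$ commutes with $f$ and so maps stable and unstable sets to stable and unstable sets, and it preserves $\W^c_f=\W^{cs}_f\cap\W^{cu}_f$ by the unique characterization of these foliations for $C^1$-small perturbations. Hence $\bar g:=\phi^{-1}g\phi$ is a homeomorphism of $X$ that permutes the $f_0$-center leaves, i.e. the projections of the cosets $xG^c$. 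Lifting to $G$, I get a homeomorphism $\tilde g$ of $G$ and an automorphism $\sigma_g\in\mathrm{Aut}(\Gamma)$ defined by $\tilde g(x\gamma)=\tilde g(x)\sigma_g(\gamma)$. The first step is to show that the class $[\sigma_g]\in\mathrm{Out}(\Gamma)$ is well defined, and that $g\mapsto[\sigma_g]$ is a homomorphism $\mathcal Z^r(f)\to\mathrm{Out}(\Gamma)$. Changing the lift changes $\sigma_g$ by an inner automorphism, so this step should be routine.

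The second step is to show that the kernel of this homomorphism lies in $\CZ^r(f)$. Suppose $\sigma_g$ is inner; after composing with a deck transformation I may assume $\sigma_g=\mathrm{id}$, so $\tilde g$ is $\Gamma$-equivariant. Since $\tilde g$ commutes with a lift $\tilde f_0$ modulo center leaves, the induced map on the leaf space $G/G^c$ is $\Gamma$-equivariant and commutes with the hyperbolic action of $f_0$ on the transverse directions. I would argue that the displacement $\tilde g(x)$ versus $x$, measured transversally to $G^c$, is bounded by compactness of $X$. Iterating under $f_0^{\pm n}$ then expands any nonzero transverse displacement in either the stable or the unstable direction, which contradicts the bound. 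So $\tilde g(xG^c)=xG^c$ for every $x$, and therefore $g\in\CZ^r(f)$. This is the standard expansivity argument on the leaf space, and the boundedness of the displacement is the point to check carefully, using the uniform quasi-isometry of the leaf conjugacy.

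The third step bounds the image. Here I would invoke Mostow/Margulis superrigidity, since each simple factor has rank at least $2$ and $\Gamma$ is cocompact. This gives $\mathrm{Out}(\Gamma)$ finite, with the bound depending only on $G$ (through $\mathrm{Aut}(\mathfrak g)/\mathrm{Inn}$ and the normalizer of $\Gamma$), and hence $|\mathcal Z^r(f)/\CZ^r(f)|\le|\mathrm{Out}(\Gamma)|<C$. A uniformity subtlety remains: $C$ should not depend on $f$, and it does not, because the image lies in the fixed finite group $\mathrm{Out}(\Gamma)$.

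The main obstacle I expect is the second step. The difficulty is controlling $\tilde g$ globally in the universal cover using only a $C^0$ leaf conjugacy, so that the transverse displacement is uniformly bounded and expansivity applies. I would first try to get this bound by choosing the lift $\tilde g$ through an $su$-path argument (Proposition \ref{faccess}): the bounded-length $su$-paths from $x$ to $g(x)$ give the required control.
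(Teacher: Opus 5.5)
The construction of $[\sigma_g]$ and the finiteness of $\mathrm{Out}(\Gamma)$ are fine, but Step 2 is false as stated. The kernel of $g\mapsto[\sigma_g]\in\mathrm{Out}(\Gamma)$ is \emph{not} contained in $\CZ^r(f)$, and this already fails for $f=f_0$.

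Take $G=\mathrm{SL}_3(\R)$ and $f_0=L_a$ with $a$ generic diagonal, so that $G^c=A$. Let $m=\mathrm{diag}(-1,-1,1)\in Z_G(a)\setminus A$.
\begin{itemize}
\item $L_m$ commutes with $f_0$.
\item Its lift $x\mapsto mx$ satisfies $\tilde g(x\gamma)=\tilde g(x)\gamma$, so $\sigma_{L_m}=\mathrm{id}$.
\item $L_m$ sends the leaf $Ax\Gamma$ to $Amx\Gamma$, which differs from $Ax\Gamma$ for generic $x$. Equality for every $x$ would force $x^{-1}Am\,x\cap\Gamma\neq\emptyset$ for all $x$, and this fails off a countable union of proper subvarieties.
\end{itemize}
So the finite group $Z_G(a)/Z_G(a)^0$, and more generally $N_G(G^c)/G^c$, contributes to $\mathcal Z^r(f)/\CZ^r(f)$ without being seen by $\mathrm{Out}(\Gamma)$. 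Your final inequality $|\mathcal Z^r(f)/\CZ^r(f)|\le|\mathrm{Out}(\Gamma)|$ is therefore wrong in general.

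The obstacle you identified is not the real one. Once $\sigma_g=\mathrm{id}$, the displacement $b(x)=\tilde g(x)x^{-1}$ is $\Gamma$-invariant, hence bounded by compactness of $X$; no $su$-path argument is needed. The actual failure is that bounded does not mean small. Splitting the displacement into stable, center and unstable parts and expanding under $f_0^{\pm n}$ only works when $b(x)$ lies near $G^c$. The constant displacement $b\equiv m$ above satisfies $a^nma^{-n}=m$, so it is never expanded.

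What you need instead runs as follows.
\begin{enumerate}
\item Coset preservation gives $b(cx)\in G^c\,b(x)\,c^{-1}$ for all $c\in G^c$. Boundedness of $b$ then yields that $b(x)G^cb(x)^{-1}$ lies in a bounded neighborhood of $G^c$.
\item A rigidity fact for the Levi subgroup $G^c=Z_G(a)^0$ is required: a conjugate of $G^c$ lying within bounded distance of $G^c$ equals it. This forces $b(x)\in N_G(G^c)$.
\item Since $N_G(G^c)/G^c$ is finite, continuity and connectedness make the coset $G^cb(x)=G^ch_g$ constant. This is an ``affine modulo leaves'' statement whose translational part normalizes $G^c$, which is the kind of conclusion the cited result of Witte supplies.
\item The map $g\mapsto G^ch_g$ is a homomorphism from $\ker\bigl(\mathcal Z^r(f)\to\mathrm{Out}(\Gamma)\bigr)$ to $N_G(G^c)/G^c$, with kernel exactly $\CZ^r(f)$.
\end{enumerate}
This gives $|\mathcal Z^r(f)/\CZ^r(f)|\le|\mathrm{Out}(\Gamma)|\cdot|N_G(G^c)/G^c|$. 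The second factor is controlled by the Weyl group and $Z_G(a)/Z_G(a)^0$, so the bound is uniform in $f$. This is why the statement asserts only a uniform constant $C$, not an injection into $\mathrm{Out}(\Gamma)$.
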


Now we focus on the twisted case.

We record a description of the outer automorphism group of $\Gamma$.

\begin{lem}\label{lem:finout}
The outer automorphism group of $\Gamma=\Gamma_0\ltimes_\rho\Z^N$ is
commensurate to the centralizer of $\rho(\Gamma_0)$ in $\mathrm{GL}_N(\Z)$.  More precisely,
\[
\mathrm{Out}(\Gamma)\doteq Z_{\mathrm{GL}_N(\Z)}(\rho(\Gamma_0))
\doteq Z_{\mathrm{GL}_N(\R)}(\rho(G_0))\cap\mathrm{GL}_N(\Z).
\]
In particular, every element of $\mathrm{Out}(\Gamma)$ may be
represented (up to inner automorphism and a fixed power) by the action of
$\operatorname{id}\ltimes A$ on
$(G_0\ltimes_\rho\R^N)/(\Gamma_0\ltimes_\rho\Z^N)$ for some
$A\in Z_{\mathrm{GL}_N(\Z)}(\rho(\Gamma_0))$.
\end{lem}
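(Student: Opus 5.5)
The plan is to compute $\mathrm{Aut}(\Gamma)$ from the structure of $\Gamma$ and then quotient by inner automorphisms. First I would show that $\Z^N$ is characteristic in $\Gamma=\Gamma_0\ltimes_\rho\Z^N$, so every $\phi\in\mathrm{Aut}(\Gamma)$ preserves it. The point is that $\Z^N$ is the unique maximal normal solvable subgroup of $\Gamma$. Indeed, the image of a normal solvable subgroup in $\Gamma_0$ is a normal solvable subgroup of a lattice in a semisimple group with no compact factors. By Borel density it is then contained in the finite center, and a finite-index argument reduces to the case where it is trivial. Hence $\phi$ induces an automorphism $\bar\phi$ of $\Gamma_0$ and a map $A:=\phi|_{\Z^N}\in\mathrm{GL}_N(\Z)$, and they satisfy the twisted equivariance $A\rho(\gamma)A^{-1}=\rho(\bar\phi(\gamma))$.

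Next I would apply Mostow rigidity, or Margulis superrigidity in the higher-rank setting assumed throughout. This gives, up to passing to a finite-index subgroup, that $\bar\phi$ extends to an automorphism of $G_0$. Since $\mathrm{Out}(\mathfrak g_0)$ is finite, a fixed power of $\bar\phi$, modified by an inner automorphism by some $g\in G_0$, is trivial on a finite-index subgroup. Also $g$ normalizes $\Gamma_0$, and the normalizer of $\Gamma_0$ contains $\Gamma_0$ with finite index. So, after an inner automorphism of $\Gamma$ and a fixed power, I may assume $\bar\phi=\mathrm{id}$ on a finite-index subgroup $\Gamma_0'$. The equivariance then reads $A\in Z_{\mathrm{GL}_N(\Z)}(\rho(\Gamma_0'))$. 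By Borel density $\rho(\Gamma_0')$ is Zariski dense in $\rho(G_0)$, so this centralizer equals $Z_{\mathrm{GL}_N(\R)}(\rho(G_0))\cap\mathrm{GL}_N(\Z)$. This gives the second $\doteq$.

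It remains to handle the cocycle part. Once $\bar\phi=\mathrm{id}$ and $A$ are fixed, $\phi(\gamma,0)=(\gamma,c(\gamma))$ with $c:\Gamma_0\to\Z^N$ a 1-cocycle for $\rho$. Two automorphisms with the same $(\bar\phi,A)$ differ by a cocycle. Inner automorphisms by elements of $\Z^N$ account exactly for coboundaries, so the remaining ambiguity is $H^1(\Gamma_0;\Z^N)$. Tensoring with $\R$, $H^1(\Gamma_0;\R^N)=0$ by Margulis/Raghunathan vanishing, since $\rho$ has no trivial component and $G_0$ has higher rank. Hence $H^1(\Gamma_0;\Z^N)$ is finite. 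It follows that $\mathrm{Out}(\Gamma)$ maps with finite kernel onto a finite-index subgroup of the centralizer. Conversely, each $A$ in the centralizer defines the automorphism $\mathrm{id}\ltimes A$, which yields the representation statement. I expect the main obstacle to be this bookkeeping of finite-index passages: extending $\bar\phi$ and controlling the cocycle only on finite-index subgroups requires care to still get a commensuration of the full groups.
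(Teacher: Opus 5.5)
Your proposal follows the paper's proof: superrigidity together with finiteness of $\mathrm{Out}(\mathfrak g_0)$ normalizes the induced automorphism of $\Gamma_0$, vanishing of $H^1(\Gamma_0;\R^N)$ disposes of the cocycle, restriction to $\Z^N$ yields $A$ commuting with $\rho(\Gamma_0)$, and Zariski density gives the second commensuration. You are in fact more careful than the paper, which takes for granted that automorphisms preserve $\{\mathrm{id}\}\ltimes\Z^N$ and kills the cocycle by conjugating with an element of $\R^N$ that normalizes $\Gamma$ but is not in general an inner automorphism of $\Gamma$; your finite group $H^1(\Gamma_0;\Z^N)$ is exactly this discrepancy.

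One small correction concerns the case $\Gamma_0\cap Z(G_0)\neq 1$. There the maximal normal solvable subgroup of $\Gamma$ is $(\Gamma_0\cap Z(G_0))\ltimes\Z^N$ rather than $\Z^N$, so your argument only shows that this larger group is characteristic. This suffices for the commensurability statement: $\Z^N$ has finite index in that finitely generated characteristic subgroup, and a finitely generated group has only finitely many subgroups of a given index, so the stabilizer of $\Z^N$ in $\mathrm{Aut}(\Gamma)$ has finite index.
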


We postpone the proof to the end of the section.

\begin{prop}\label{prop:coset-fix} (See Theorem 1.1 of \cite{Witte})
Let $X=G
/\Gamma$ be a (twisted) semisimple homogeneous space. Let $G^c=Z_G^0(f_0)$ be the identity component centralizer (in $G$) of $f_0$.  Then
any homeomorphism $X\to X$ that preserves the $G^c$--cosets can be
written as a product of an element in $\mathrm{Out}(\Gamma)$ and a homeomorphism
that fixes every $G^c$--coset.
\end{prop}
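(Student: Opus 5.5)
We follow Witte's argument for Theorem 1.1 of \cite{Witte}. Let $\phi:X\to X$ be a homeomorphism preserving the foliation by $G^c$--cosets. First we lift $\phi$ to the universal cover. Since $G^c$ is connected and the coset foliation lifts to the foliation of $\widetilde G$ (or of $G$, after passing to the appropriate cover) by left cosets of $\widetilde{G^c}$, we obtain a lift $\tilde\phi:\widetilde G\to\widetilde G$ equivariant with respect to an automorphism $\alpha\in\mathrm{Aut}(\pi_1 X)$. Concretely, $\tilde\phi(x\gamma)=\tilde\phi(x)\alpha(\gamma)$, and $\tilde\phi$ maps $G^c$--cosets to $G^c$--cosets. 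For a twisted space $\pi_1X$ is an extension of $\Gamma$ by $\pi_1(G)$, and we will work with $\Gamma$ directly when $\pi_1(G)=\pi_1(G_0)$ is central and finite modulo the relevant part.

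The key step is to show that $\alpha$ is induced, up to an inner automorphism, by an automorphism $\Psi$ of $G$ normalizing $G^c$. For this we use Lemma~\ref{lem:finout}. On the $\Z^N$ part, $\alpha$ preserves the unique maximal normal abelian subgroup $\Z^N$, since it is the intersection of $\Gamma$ with the nilradical. The induced automorphism of $\Gamma_0=\Gamma/\Z^N$ extends to $G_0$ by Mostow rigidity; each simple factor has rank $\ge2$, so it is locally an inner or diagram automorphism, of which there are finitely many up to inner. The action on $\Z^N$ is then an element $A$ intertwining $\rho$ with $\rho\circ\alpha_0$. After composing $\phi$ with a suitable power and with the map $\operatorname{id}\ltimes A^{-1}$ (for $A$ in $Z_{\mathrm{GL}_N(\Z)}(\rho(\Gamma_0))$, which Lemma~\ref{lem:finout} identifies with $\mathrm{Out}(\Gamma)$), we reduce to the case where $\alpha$ is inner, i.e.\ conjugation by some $\gamma_0\in\Gamma$. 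Composing further with the deck translation, $\tilde\phi$ then commutes with the right $\Gamma$--action.

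It remains to show that a $\Gamma$--equivariant map $\tilde\phi$ of $G$ permuting $G^c$--cosets fixes every coset. Following Witte, consider the induced map $\bar\phi$ on $G/G^c$. It commutes with the right $\Gamma$ action on $G/G^c$, viewed via inverses as a left action, so $\bar\phi$ is a $\Gamma$--equivariant self-map of the homogeneous space $\Gamma\backslash\!\backslash G/G^c$. Use the density argument: for $y\in G$ the map $\gamma\mapsto\bar\phi(y\gamma G^c)$ compared with $y\gamma G^c$ gives a continuous function $X\to G^c\backslash G/G^c$-type displacement. Since $\phi$ is $C^0$ and $X$ is compact, this displacement is bounded. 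A coset map with bounded displacement that is equivariant under a lattice must commute with the $\Gamma$--action on $G/G^c$, and Borel density (the Zariski closure of $\Gamma$ contains $G_0$, and $\Z^N$ is Zariski dense in $\R^N$) upgrades this to commuting with all of $G$. Hence $\bar\phi$ is given by right multiplication by an element $n$ of the normalizer $N_G(G^c)$ modulo $G^c$. Since $\Gamma$--equivariance forces $n$ to centralize $\Gamma$ modulo $G^c$, and the centralizer is trivial up to finite index by Borel density, $n\in G^c$, up to finitely many choices that are absorbed into the $\mathrm{Out}(\Gamma)$ factor. Therefore $\phi$ fixes every coset.

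We expect the main obstacle to be this bounded-displacement/Borel density step in the twisted case. There the normalizer of $G^c$ may contain nontrivial unipotent parts from $\R^N$, namely the zero-weight space $\mathfrak e_0$. We would handle it by splitting $N_G(G^c)/G^c$ into its $G_0$ part and its $\R^N$ part, and using that $\rho$ has no trivial component, so $\Z^N$ has no nonzero invariant vector.
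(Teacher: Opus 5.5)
Your first two steps (lifting to $G$ and using Lemma~\ref{lem:finout} to adjust the lift so that it commutes with right multiplication by $\Gamma$) are essentially what the paper does, and you correctly observe that this gives bounded displacement. The gap is in the final step. Borel density cannot upgrade $\Gamma$-equivariance of the induced coset map $\bar\phi$ to $G$-equivariance. Borel density is a statement about algebraic objects (Zariski-closed $\Gamma$-invariant sets, rational representations, polynomial maps). Your $\bar\phi$ is only a homeomorphism of the space of cosets, and a continuous $\Gamma$-equivariant self-map of that space has no reason to commute with $G$. So the conclusion that $\bar\phi$ is multiplication by an element of $N_G(G^c)$ is not established. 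Two smaller points: the sentence that a bounded equivariant coset map ``must commute with the $\Gamma$-action'' merely restates the hypothesis. Also, the center leaves of the left translation $f_0$ are the cosets $G^c x$, on which $\Gamma$ acts by right multiplication; on $G/G^c$, right multiplication by $\Gamma$ is not well defined.

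What is needed, and what the paper uses, is the bounded displacement itself. For the right-invariant metric descending to $X$ one has $d(\tilde\phi(x),x)\le R$. Hence $\tilde\phi$ maps $G^c x$ onto the coset $G^c\tilde\phi(x)$, which lies within Hausdorff distance $R$ of it, and one compares such cosets directly. The paper asserts that distinct $G^c$-cosets are never at bounded distance. The accurate statement is that $G^c zx$ is at bounded distance from $G^c x$ if and only if $z\in N_G(G^c)$. The reason is that bounded orbits of the split torus and of unipotent one-parameter subgroups of $G^c$ on $G/G^c$ are fixed points.

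Next, the normalizer algebra is $\operatorname{ad}\mathfrak h$-invariant, and $[a,v]=\mu(a)v\notin\mathfrak g^c$ for $0\neq v\in V_\mu$ with $\mu(a)\neq 0$. Hence $\operatorname{Lie}N_G(G^c)=\mathfrak g^c$. So $x\mapsto G^c\tilde\phi(x)x^{-1}$ is a continuous map from the connected group $G$ into the finite group $N_G(G^c)/G^c$, hence constant, and $\phi=L_n\circ(\text{coset-fixing map})$.

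Your plan to absorb $n$ into the $\mathrm{Out}(\Gamma)$ factor then fails. A left translation commutes with right multiplication by $\Gamma$ and induces the trivial class in $\mathrm{Out}(\Gamma)$, so equivariance puts no constraint on $n$. For $n\in N_G(G^c)\setminus G^c$ (e.g.\ $n=\mathrm{diag}(-1,-1,1)$ for a generic $f_0$ in $\mathrm{SL}_3(\R)$), the map $L_n$ does not fix the cosets. This finite ambiguity has to be carried along separately; the paper's one-line argument overlooks it as well. In the subsequent corollary it only means that $\CZ(f)$ has finite index in the kernel of $\mathcal Z(f)\to\mathrm{Out}(\Gamma)$.
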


\begin{proof}
Let $\varphi:X\to X$ be a homeomorphism preserving the $G^c$--cosets.
Lift $\varphi$ to a homeomorphism $\tilde\varphi:G\to G$ that
preserves the left cosets of $G^c$ and satisfies
$d_{G}(\tilde\varphi(\mathrm{id},0),(\mathrm{id},0))\le R(X)$, where
$R(X)$ is the diameter of $X$.

The lift induces a group automorphism
$\tilde\varphi_*:\Gamma\to\Gamma$ by
$(\gamma,v)\mapsto \tilde\varphi(\mathrm{id},0)^{-1}\tilde\varphi(\gamma,v)$.
By Lemma \ref{lem:finout} (and reduction by inner
automorphisms), we may compose $\tilde\varphi$ with an automorphism
$(\operatorname{id}\ltimes A)$, $A\in
Z_{\mathrm{GL}_N(\Z)}(\rho(\Gamma_0))$, so that the resulting automorphism
$\psi=(\operatorname{id}\ltimes A)\circ\tilde\varphi$ acts trivially on
$\Gamma$.

We claim that $\psi$ fixes every $G^c$--coset.  By construction $\psi$
is a bounded homeomorphism of the (noncompact) group $G$. If for
some $x\in G$ the cosets $G^c\psi(x)$ and $G^c x$ are distinct, then
they would be at a bounded distance (because $\psi$ is bounded), contradicting
the fact that distinct cosets of $G^c$ in $G$ are not at bounded
distance. Hence $\psi(x)\in G^c x$ for all $x$, i.e. $\psi$ fixes the
$G^c$--cosets. This proves the proposition.
\end{proof}

As a corollary we obtain the desired relation between the centralizer and
outer automorphisms.

\begin{cor}
Let $f$ be a perturbation of the element $f_0$ of the twisted Weyl
chamber flow. Then the quotient
$\mathcal Z(f)/\CZ(f)$ injects into $\mathrm{Out}(\Gamma)$. Equivalently,
$\mathcal Z(f)/\CZ(f)$ is isomorphic to a subgroup of
$\mathrm{Out}(\Gamma)$.
\end{cor}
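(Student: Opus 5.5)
The plan is to define a homomorphism $\mathcal Z(f)\to \mathrm{Out}(\Gamma)$ using Proposition \ref{prop:coset-fix}, and to show that its kernel is exactly $\CZ(f)$.

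\textbf{Step 1: elements of $\mathcal Z(f)$ permute center leaves.} Let $g\in\mathcal Z(f)$. Since $g$ commutes with $f$, it preserves $\W^s_f$ and $\W^u_f$, because these leaves are characterized dynamically. It therefore also preserves $\W^{cs}_f$ and $\W^{cu}_f$, and hence their intersection $\W^c_f$. For the $\W^{cs}_f$ and $\W^{cu}_f$ foliations I would use uniqueness: by Proposition \ref{fparhyp} they are the unique $f$-invariant foliations close to those of $f_0$, in the sense of plaque expansivity. Next, by Proposition \ref{fparhyp}(4) there is a leaf conjugacy $\phi$ carrying $\W^c_{f_0}$, whose leaves are the $G^c$-cosets, to $\W^c_f$. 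Then $\varphi:=\phi^{-1}\circ g\circ\phi$ is a homeomorphism of $X$ preserving the $G^c$-cosets.

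\textbf{Step 2: define the map.} Applying Proposition \ref{prop:coset-fix}, write $\varphi=\alpha_g\circ\psi_g$, where $\alpha_g$ is induced by $\operatorname{id}\ltimes A$ with $A\in Z_{\mathrm{GL}_N(\Z)}(\rho(\Gamma_0))$, or more generally is a representative of a class in $\mathrm{Out}(\Gamma)$, and $\psi_g$ fixes every $G^c$-coset. I then define $\Phi(g)\in\mathrm{Out}(\Gamma)$ as the class of $\tilde\varphi_*$, the automorphism of $\Gamma$ induced by a lift of $\varphi$ as in that proof. This class does not depend on the choice of lift: changing the lift changes $\tilde\varphi_*$ by an inner automorphism. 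Since lifts compose, $\Phi$ is a homomorphism.

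\textbf{Step 3: the kernel is $\CZ(f)$.} If $g\in\CZ(f)$, then $g$ fixes each $f$-center leaf, so $\varphi$ fixes each $G^c$-coset. Hence $\varphi$ is homotopic to the identity via leafwise straight-line homotopy in the universal cover, so it induces an inner automorphism and $\Phi(g)$ is trivial. Conversely, if $\Phi(g)$ is trivial, the argument in the proof of Proposition \ref{prop:coset-fix} applies. After adjusting the lift by an element of $\Gamma$, the lift $\tilde\varphi$ commutes with $\Gamma$ and is at bounded distance from the identity. Distinct $G^c$-cosets in $G$ lie at unbounded distance from each other, so $\tilde\varphi$ fixes every coset. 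Consequently $g$ fixes every leaf of $\W^c_f$, that is, $g\in\CZ(f)$. This shows that $\mathcal Z(f)/\CZ(f)$ embeds in $\mathrm{Out}(\Gamma)$.

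\textbf{Main obstacle.} I expect the hardest point to be the bounded-distance claim for cosets in the twisted group $G=G_0\ltimes_\rho\R^N$. One must check that if two $G^c$-cosets $xG^c$ and $yG^c$ stay at bounded Hausdorff distance, then they coincide. I would argue as follows. Pulling back, $y^{-1}x$ lies in a subset $S$ with $S\,G^c$ at bounded distance from $G^c$. Because $G^c$ contains the split Cartan subgroup, conjugating by elements of $\exp\mathfrak h$ expands every non-center direction. Here I use that $f_0$ is nontrivial, and, for directions with zero roots or weights, that these already lie in $G^c$. Hence any displacement transverse to $G^c$ is blown up, which would contradict boundedness. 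A secondary subtlety is well-definedness modulo the finite-index ambiguity, the ``$\doteq$'' in Lemma \ref{lem:finout}. I would handle it by working with $\mathrm{Out}(\Gamma)$ directly rather than with its commensurable model.
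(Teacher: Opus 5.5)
Your proposal follows the paper's route: define the natural map $\Phi:\mathcal Z(f)\to\mathrm{Out}(\Gamma)$ through lifts of $\phi^{-1}g\phi$, and identify the kernel using the argument of Proposition \ref{prop:coset-fix}. That argument says a lift commuting with $\Gamma$ lies at bounded distance from the identity, and that distinct $G^c$-cosets are never at bounded distance. You also supply the inclusion $\CZ(f)\subset\ker\Phi$, which the paper only asserts. However, the step you flag as the main obstacle is not merely delicate: it is false, and the paper's proof rests on the same assertion. Your sketch shows only that a displacement with a nonzero component in some non-central root or weight space is blown up by conjugation with $\exp\mathfrak h$. It cannot detect displacements by elements $m\in Z_G(f_0)\setminus G^c$, that is, by other connected components of the centralizer. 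Such $m$ normalize $G^c$, so $G^cmx=mG^cx$ is a genuine coset. For a right-invariant metric it lies within Hausdorff distance $d(m,e)$ of $G^cx$.

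Concretely, take the paper's example $G=\mathrm{SL}_3(\R)\ltimes_\rho\R^9$ with $f_0=(\exp a,0)$ generic, so that $G^c=D$, and let $m=(\operatorname{diag}(1,-1,-1),0)$. Then $L_m\in\mathcal Z(f_0)$. Its lift $x\mapsto mx$ commutes with the right $\Gamma$-action, so $\Phi(L_m)$ is trivial, and it moves every point by exactly $d(m,e)$. Yet it sends $Dx$ to the distinct coset $Dmx$, and for generic $x$ one has $mx\notin Dx\Gamma$, so $L_m\notin\CZ(f_0)$. Hence for $f=f_0$ the map $\mathcal Z(f)/\CZ(f)\to\mathrm{Out}(\Gamma)$ is not injective. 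The same happens for non-generic elements satisfying the hypotheses of Theorem \ref{cenrignongen:twisted}: take $f_0=(\operatorname{diag}(e^2,e^2,e^{-1},e^{-3}),0)$ and $m=(\operatorname{diag}(1,-1,-1,1),0)$.

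What the bounded-distance argument actually yields is a finite-kernel statement. The $y$ for which $G^cy$ is at bounded Hausdorff distance from $G^c$ form a subgroup $S\supseteq G^c$ of finite index: boundedness of $\mathrm{Ad}(G^c)\mathrm{Ad}(y)a$ forces $\mathrm{Ad}(y)a$ into a finite set of Weyl conjugates of $a$. Moreover, $x\mapsto G^c\tilde\varphi(x)x^{-1}$ is a continuous map into the finite set $G^c\backslash S$, hence constant. Therefore $\ker\Phi/\CZ(f)$ has at most $[S:G^c]$ elements. This still gives the discreteness of $\mathcal Z(f)/\CZ(f)$, which is all that is used later, but the statement itself must be weakened accordingly. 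Separately, the leafwise straight-line homotopy in your Step 3 only makes sense when $G^c$ is a vector group, i.e.\ in the generic case.
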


\begin{proof} There is a natural map $\mathcal Z(f ) \to \mathrm{Out}(\Gamma)$ given by the action of lifts
of elements of $\mathcal Z(f)$ on the lattice $\Gamma$. Let $g$ lie in the kernel of the map.

Since $g\in \mathcal Z(f)$, it preserves the center, stable, and unstable foliations of $f$. Let $\phi$ be the leaf conjugacy from $(f_0,\W^c_{f_0})$ to $(f,\W^c_f)$. Therefore, the
lifting of $\phi^{-1} g\phi$ preserves the center leaves of $f_0$, which are
the $G^c$-cosets in $G$. By Proposition \ref{prop:coset-fix} the map
$\phi^{-1} g\phi$ is a product of an outer automorphism of $\Gamma$ and a homeomorphism that fixes the  $G^c$--coset. The leaf conjugacy $\phi$ is $C^0$-close to identity, so we may choose a lifting close to identity so that the induced action of $\phi$ on $\Gamma$ is trivial. Since the induced action of $g$ on $\Gamma$ is an inner automorphism, we see that $\phi^{-1} g\phi$ induces a trivial action on $\mathrm{Out}(\Gamma)$. This shows that $\phi^{-1} g\phi$ fixes the center foliations of $f_0$. So $g$ fixes the center leaves of $f$ and $g\in\CZ(f)$. Thus the kernel is exactly
$\CZ(f)$, and the quotient embeds into $\mathrm{Out}(\Gamma)$.
\end{proof}

Now we go back to computing the structure of the outer automorphism group.
\begin{proof}[Proof of Lemma \ref{lem:finout}]
Let $\psi$ be an automorphism of $\Gamma$. Projecting to the
$\Gamma_0$--factor gives an induced automorphism
$\psi_G:\Gamma\to\Gamma_0$, $\psi_G(\gamma)=\pi(\psi(\gamma,0))$, where
$\pi:\Gamma\to\Gamma_0$ is the projection. Since $\Gamma_0$ is a
cocompact lattice in the $\R$-split semisimple group $G_0$, Margulis
superrigidity implies that $\psi_G$
extends to an automorphism of $G_0$ preserving $\Gamma_0$. Since
$\mathrm{Out}(G_0)$ is finite, after composing $\psi$ with an inner
automorphism by an element of $\Gamma_0\ltimes\{0\}$, we may assume
$\psi_G=\operatorname{id}$ up to a power.

Thus we have $\psi(\gamma,0)=(\gamma,v_\gamma)$ for some $v_\gamma\in\Z^N$ and
all $\gamma\in\Gamma_0$. From the group law in $\Gamma$ we obtain
\[
v_{\gamma\gamma'}=v_\gamma+\rho(\gamma)v_{\gamma'}\qquad(\gamma,\gamma'\in\Gamma_0),
\]
so $\gamma\mapsto v_\gamma$ is a $\Z^N$-valued cocycle for the
$\Gamma_0$-action by $\rho$. Since $\Gamma_0$ is a lattice in a higher‑rank semisimple Lie group $G_0$ and $\rho$ does not have a trivial subrepresentation, classical results (by Raghunathan~\cite{Raghunathan}; or Zimmer's cocycle superrigidity~\cite{Zimmer}) imply that the cocycle is cohomologous to the trivial cocycle, i.e. conjugating $\psi$ by a suitable element of
$\{\mathrm{id}\}\ltimes\R^N$ (an inner automorphism) eliminates this
cocycle. Hence (up to inner automorphism) we may assume $v_\gamma=0$
for all $\gamma$.

Now restrict $\psi$ to the normal subgroup $\{\mathrm{id}\}\ltimes\Z^N$.
This gives a homomorphism $\Z^N\to\Z^N$, so
$\psi(\mathrm{id},v)=(\mathrm{id},Av)$ for some
$A\in\mathrm{GL}_N(\Z)$. Using
$\psi(\gamma,v)=\psi(\gamma,0)\psi(\mathrm{id},\rho(\gamma)^{-1}v)=\psi(\mathrm{id},v)\psi(\gamma,0)$ and the previous
normalization so that $v_\gamma=0$, we obtain
$\rho(\gamma)A\rho(\gamma)^{-1}=A$ for all $\gamma\in\Gamma_0$. Thus
$A\in Z_{\mathrm{GL}_N(\Z)}(\rho(\Gamma_0))$.

Therefore every outer automorphism class is represented (after composing
with an inner automorphism and up to a power) by $\operatorname{id}\ltimes A$ with
$A\in Z_{\mathrm{GL}_N(\Z)}(\rho(\Gamma_0))$, and
$Z_{\mathrm{GL}_N(\Z)}(\rho(\Gamma_0))=Z_{\mathrm{GL}_N(\R)}(\rho(G_0))\cap\mathrm{GL}_N(\Z)$
by Zariski density of $\rho(\Gamma_0)$ in $\rho(G_0)$. This completes the proof.
\end{proof}

\section{Dynamics in the centralizer}\label{sec:cla}
In this section, we establish some dynamical properties of elements of the centralizer. We denote by $\CZ_0(f)$ the identity component of $\CZ(f)$, and we shall focus on elements in this subgroup of the centralizer.

\subsection{Elements of the centralizer are $C^0$ close to affine}
Following \cite{Wang}, we estimate the difference between the $su$-holonomies
of $f$ and $f_0$ to show that the elements of $\CZ_0(f)$ are $C^0$-close to affine
translations along center leaves of $f_0$ in $X$.

Let $f$ be a $C^1$–small perturbation of an element of the (twisted) Weyl chamber flow $f_0$ in $G/\Gamma$.

Let $\phi$ be the leaf conjugacy from $(X,\mathcal W^c_{f_0})$ to
$(X,\mathcal W^c_f)$ provided by Theorem \ref{leafconj}. For
$g\in\CZ_0(f)$ denote $\hat g:=\phi^{-1}\,g\,\phi$. Since $g$ fixes the
center leaves of $f$, $\hat g$ fixes the center leaves of $f_0$, which are
left cosets of $G^c$.

Denote by $\tilde g$ a lift of $\hat g$ to the cover $G$ of $G/\Gamma$,
and by $\widetilde{\mathcal W}^c_{f_0}$ the lift of the center leaves of
$f_0$ to $G$ (these are the left cosets of $G^c$).

We first obtain a uniform estimate on the behavior of $\hat g$ at every
$x\in X$.

\begin{prop}\label{gtransl}
For any $0<\epsilon<1$ and $R>0$ there exists $\delta>0$ such that for any
$f\in\mathrm{Diff}^r(X)$ with $d_{C^1}(f,f_0)<\delta$, and for any
$x\in X$ and $g\in\CZ_0(f)$ satisfying
$d_{\widetilde{\mathcal W}^c_{f_0}}(\tilde g(\tilde x),\tilde x)\le R$, we have
\[
d_{\widetilde{\mathcal W}^c_{f_0}}(\tilde g(\tilde y),g_1(\tilde y))
\le \epsilon\; d_{\widetilde{\mathcal W}^c_{f_0}}(\tilde{g}(\tilde x),\tilde x),
\;\; \forall y\in X.
\]

Here $g_1$ denotes the left translation in $G/\Gamma$ determined by
$\tilde g(\tilde x)\tilde x^{-1}$, i.e.
$g_1(y)=\big(\tilde g(\tilde x)\tilde x^{-1}\big)\cdot y$, and for $z,w$ in the same
$G^c$–coset
$d_{\widetilde{\mathcal W}^c_{f_0}}(z,w):=d_{G^c}(z w^{-1},\mathrm{id})$.

\end{prop}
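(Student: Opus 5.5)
The plan is to compare the $su$-holonomies of $f$ with those of $f_0$, and to use the fact that elements of $\CZ_0(f)$ commute with all $f$-$su$-holonomies between center leaves.

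\textbf{Step 1: invariance.} Since $g\in\CZ_0(f)$ preserves $\W^s_f$, $\W^u_f$ and every center leaf of $f$, it intertwines holonomies: for any $f$-$su$-path $\gamma$ from $x$ to $y$ we have
\[
g\circ h^f_\gamma = h^f_{g\gamma}\circ g
\]
on $\W^c_f(x)$. Here $g\gamma$ is again an $su$-path, now from $g(x)$ to $g(y)$. The global holonomy proposition (Proposition 2.8 of \cite{Wang}) lets me work on the lifted, simply connected leaves in $G$, so these relations hold globally. Conjugating by $\phi$ gives the analogous relation for $\hat g$ and the induced $su$-holonomies $h^{f}_{\phi^{-1}\cdot}$ between $f_0$-center leaves.

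\textbf{Step 2: compare with the model.} For $f_0$, the $su$-holonomies between lifted center leaves $zG^c$ and $wG^c$ are affine. A stable holonomy along $u\in\exp(E^s)$ is left multiplication by $u$, and $G^c$ normalizes the stable and unstable subgroups. By Proposition \ref{faccess}, any $y$ is joined to $x$ by an $su$-path with at most $N$ legs and length at most $C$. Theorem \ref{c1hol}, together with the $C^r$-smallness of $\phi$ along center leaves, gives a $C^1$ estimate on bounded pieces of leaves: the $f$-holonomy transported by $\phi$ is $C^1$-close, up to $o(1)$ as $\delta\to 0$, to the corresponding $f_0$-holonomy, which is left multiplication by the product of the legs.

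\textbf{Step 3: transfer the displacement.} Set $t=d(\tilde g(\tilde x),\tilde x)\le R$, and let $P$ be the left multiplication realising the $f_0$-holonomy along the path from $x$ to $y$. Then
\[
\tilde g(\tilde y)=\tilde h_{g\gamma}\bigl(\tilde g(\tilde x)\bigr)\approx \tilde h_{g\gamma}\bigl(g_1\tilde x\bigr).
\]
The affine $f_0$-holonomy of the path starting at $g_1\tilde x$ sends $g_1\tilde x$ to $g_1 P\tilde x = g_1\tilde y$. So the error comes from two sources: how the $f$-holonomies for the paths $\gamma$ and $g\gamma$ differ from their $f_0$ models, and how the lifted paths differ.

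The key point, and the main obstacle, is to make this error proportional to $t$ rather than merely small. My approach is to view $\tilde h_{g\gamma}\circ\tilde h_\gamma^{-1}$ composed with the model as a map varying $C^1$-continuously with the basepoint. Its derivative deviates from the model derivative by $O(\delta')$ uniformly on the compact set of paths of bounded length. Integrating along a center geodesic from $\tilde x$ to $\tilde g(\tilde x)$, of length $t\le R$, then gives
\[
d\bigl(\tilde g(\tilde y),g_1\tilde y\bigr)\le C'\delta'\,t,
\]
and I choose $\delta$ so that $C'\delta'<\epsilon$.

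The same integration has to be carried out for the leg endpoints of $g\gamma$ versus those of $g_1\gamma$. Both are obtained from $\gamma$ by the displacement $t$ within $\W^c$, so their difference is again $O(\delta' t)$, by $C^1$-continuity of holonomies in the base points (Theorem \ref{c1hol}, where $\|Dh-\mathrm{Id}\|\le C_f d$).

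Throughout, uniformity in $x,y$ comes from compactness of $X$ and the uniform bounds $N$ and $C$ on the $su$-paths.
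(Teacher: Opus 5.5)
Your ingredients match the paper's: bounded $su$-paths from Proposition~\ref{faccess}, commutation of $g$ with holonomies, $C^1$-closeness of the $f$- and $f_0$-holonomies, and integration along a center curve of length comparable to $t$. But Step~3, which you rightly call the crux, does not as written produce an error proportional to $t$.

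The relation $\tilde g(\tilde y)=\tilde h_{g\gamma}(\tilde g(\tilde x))$ is a tautology, since $g\gamma$ is by definition a path from $g(x)$ to $g(y)$. The ``$\approx$'' that follows is an equality, since $g_1\tilde x=\tilde g(\tilde x)$ by definition of $g_1$. So nothing has yet been extracted from the commutation. The object ``$\tilde h_{g\gamma}\circ\tilde h_\gamma^{-1}$ composed with the model'' is never pinned down as a single map on the center segment from $\tilde x$ to $\tilde g(\tilde x)$. More importantly, you never exhibit a point where the $f$-side map and the model map agree. Without such an anchor, closeness of derivatives cannot bound $d(\tilde g(\tilde y),g_1\tilde y)$ at all, let alone by a multiple of $t$. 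Your last paragraph has the same defect: asserting that the corners $g(x_i)$ and $g_1(x_i)$ differ by $O(\delta' t)$ because both are ``obtained from $\gamma$ by the displacement $t$'' is essentially the proposition itself, applied to the corners.

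The missing step is the one the paper uses. Because $g$ fixes every center leaf, it commutes with the global $su$-holonomy $h_\gamma$ itself on the simply connected lifted leaves; equivalently, $h_{g\gamma}=h_\gamma$ as global maps. Hence $\tilde g(\tilde y)=\hat h^f_\gamma(\tilde g(\tilde x))$ for the \emph{same} map $\hat h^f_\gamma$ that sends $\tilde x$ to $\tilde y$. On the model side, $g_1\tilde y=h^{f_0}_{\gamma_0}(\tilde g(\tilde x))$ with $h^{f_0}_{\gamma_0}(\tilde z)=\tilde z\tilde x^{-1}\tilde y$, which also sends $\tilde x$ to $\tilde y$.

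This anchoring at $\tilde x$ is what yields the factor $t$. The model map is a right translation, so it has derivative $\mathrm{Id}$ in right-invariant coordinates and is an isometry of $d_{\widetilde{\mathcal W}^c_{f_0}}$. Meanwhile $\|D\hat h^f_\gamma-\mathrm{Id}\|\to0$ as $d_{C^1}(f,f_0)\to0$, uniformly over paths of bounded length and number of legs and over the relevant bounded part of the leaf. Integrating along a center curve from $\tilde x$ to $\tilde g(\tilde x)$ of length $<2t$ gives the estimate, and no comparison of $g\gamma$ with $g_1\gamma$ is needed. A leg-by-leg version can also work, but only after you state that $g(x_{i+1})$ is the image of $g(x_i)$ under the $f$-holonomy of the $i$-th leg, and anchor each step at $x_i$.

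Two smaller corrections. First, the smallness has to come from the $C^1$-continuity of the holonomies in $f$. The bound $\|Dh-\mathrm{Id}\|\le C_f\,d(p,q)$ you cite only gives an error comparable to the leg lengths. Second, the $f_0$-holonomies between the cosets $G^c\tilde x$ are right translations, not left multiplication by the product of the legs: a stable leg from $\tilde x$ to $u\tilde x$ sends $c\tilde x$ to $cu\tilde x=(cuc^{-1})\,c\tilde x$. Your endpoint $g_1P\tilde x$ is correct, but it is inconsistent with the left-multiplication description and is justified by the right-translation one.
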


Since $f$ is partially hyperbolic and dynamically coherent, any
$g\in\mathcal Z(f)$ satisfies $g(\mathcal W^*_f(x))=\mathcal W^*_f(g(x))$
for $*=s,u,c,cs,cu$. If moreover $g\in\CZ_0(f)$ then $g$ fixes every center
leaf of $f$, hence $g$ commutes with the global $su$–holonomies between
center leaves. Consequently the behavior of $g$ at different points is
controlled by the $su$–holonomies of $f$. The proof below follows the
proof of Proposition 5.7 in \cite{Wang}.

\begin{proof}

The stable, unstable and center foliations of $f_0$ lift to left cosets
of subgroups of $G$, hence the stable/unstable holonomies of $f_0$ are
affine: for $x,y\in X$, any $su$-path $\gamma_0$ connecting $x$ and $y$, and any $z\in\mathcal W^c_{f_0}(x)=G^c\cdot x$, the global holonomy in the universal cover $G$ is given by
\[
h^{f_0}_{\gamma_0}(\tilde z)=\tilde z \tilde x^{-1}\cdot  \tilde y.
\]

Let $g\in\CZ_0(f)$. Let $\phi(\gamma)$ be an $f$-su path connecting $\phi(x)$ and $\phi(y)$ in $X$. Let $\gamma_0$ be an $f_0$-su path connecting $x$ and $y$.

Since $g$ commutes with the global stable and
unstable holonomies of $f$, we have
$\tilde g(\tilde y)=\widehat h^f_{\gamma} (\tilde g( \tilde x))$ and
$g_1(\tilde y)=\tilde{g}(\tilde x)\tilde x^{-1}\cdot \tilde y=h^{f_0}_{\gamma_0}(\tilde g(\tilde x))$, where
$\widehat h^f_{\gamma}:=\phi^{-1}\,h^f_{\phi^{-1}(\gamma)}\,\phi$
is the conjugated holonomy. To compare $\tilde g(\tilde y)$ and $g_1(\tilde y)$, it suffices to compare the holonomies of $f$ and $f_0$.

By Proposition \ref{faccess}, there exist constants $C,N>0$ such
that for every $x,y\in X$ there is an $su$–path $\phi(\gamma)$ from $\phi(x)$ to $\phi(y)$ of length
$\le C$ with at most $N$ legs. (We note that $X$ is compact so $d_X(x,y)$ is uniformly bounded).

Fix a curve $c$ in the center leaf
joining $\tilde x$ to $z=\tilde g(\tilde x)$ with length $l(c)<2d_{\tilde \W^c_{f_0}}(\tilde x,\tilde z)$. For any
point $x'\in c$ the $f$–$su$–path from $\phi(x')$ to $\phi \hat h^f_\gamma(x^\prime)$ also has
length $\le C_1$ where $C_1>C$ is a constant depending on $R>d_{\tilde \W^c_{f_0}}(\tilde g(\tilde x),\tilde x)$, the diameter of $X$ and the dynamics of $f_0$.

On the other hand, we notice that under the right-invariant coordinates of $G$, $D(h^{f_0}_{\gamma_0})=\mathrm{Id}$ since $h^{f_0}_{\gamma_0}$ is simply a right translation in the cover $G$. Applying Theorem \ref{c1hol},  we have $\|D\hat h^f_\gamma-\mathrm{Id}\|\to 0$  as $d_{C^1}(f,f_0)\to 0$. Since the leaf conjugacy $\phi$ has uniform $C^1$-norm along the center foliation, this shows that
\[
\begin{aligned}
d(\widehat h^f_{\gamma}(z),h^{f_0}_{\gamma_0}(z))
&\le \max_{x'\in c}\|D(\widehat h^f_{\gamma})|_{x'}
- D(h^{f_0}_{\gamma})|_{x'}\|\; l(c)\\
&= \max_{x'\in c}\|D(\widehat h^f_{\gamma})|_{x'}
- \mathrm{Id}\|\; l(c)\\
&\le \epsilon\, d_{\tilde \W^c_{f_0}}(\tilde x,z),
\end{aligned}
\]
which shows $d_{\widetilde{\mathcal W}^c_{f_0}}(\tilde g(\tilde y),g_1( \tilde y))
\le \epsilon\; d_{\widetilde{\mathcal W}^c_{f_0}}(\tilde g(\tilde x),\tilde x)$. 
\end{proof}

Motivated by Proposition \ref{gtransl} we make the following definition.

\begin{defi}\label{def:projepsclose}
We say that $g\in \CZ_0(f)$ is projectively $\epsilon$-close to a left
translation $g_0$ if for every $y\in X$ the lifts satisfy
\[
d_{\widetilde{\mathcal W}^c_{f_0}}(\tilde g(y),g_0(y))
\le \epsilon\; d_{\widetilde{\mathcal W}^c_{f_0}}(g_0(y),y),
\]
where $\tilde g$ is the lift of $\hat g=\phi^{-1} g\phi$ to $G$ and
$\phi$ is the leaf conjugacy between $f_0$ and $f$. We note that $g_0$ is a left translation, so the right-hand side $d_{\widetilde{\mathcal W}^c_{f_0}}(g_0(y),y)$ is a constant not depending on $y$.
\end{defi}

\subsection{Centralizer acts as a Lie group}

First we recall a slight variation of Theorem 3.1 in \cite{Wang}, which
implies that $\CZ(f)$ is a Lie group acting freely, properly, and smoothly
on the lifted center leaf $\widetilde{\mathcal W}^c_F$.

\begin{thm}[Theorem 3.1, \cite{Wang}]\label{czlieg}
Let $M$ be a compact Riemannian manifold and let $F$ be a partially
hyperbolic diffeomorphism of $M$. Assume $F$ is accessible and dynamically
coherent, center $r$‑bunched with narrow band spectrum, and that a lift of
$F$ has global holonomy on the lift of a center leaf
$\widetilde{\mathcal W}^c_F(x_0)$. Then the $\mathcal W^c$‑fixing,
$C^\infty$ centralizer $\CZ(F)$ is a Lie group acting freely and
properly on $\widetilde{\mathcal W}^c_F(x_0)$. Moreover the action
\[
\CZ(F)\times \widetilde{\mathcal W}^c_F(x_0)\to\widetilde{\mathcal W}^c_F(x_0),
\qquad (g,x)\mapsto g(x),
\]
is jointly $C^\infty$.
\end{thm}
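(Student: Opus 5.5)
The proof will follow the Montgomery--Zippin / Hilbert--Smith approach. First I will show that $\CZ(F)$ is a locally compact group acting freely and properly on $\widetilde{\mathcal W}^c_F(x_0)$. Then I will show the action is by uniformly $C^r$ maps with uniform bounds. Since a locally compact group acting effectively by $C^2$ diffeomorphisms of a manifold is a Lie group (Bochner--Montgomery), this gives the Lie structure, and joint smoothness then follows from Bochner--Montgomery and elliptic-type bootstrapping.

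\textbf{Step 1: rigidity via holonomy.} Each $g\in\CZ(F)$ fixes every center leaf, so it commutes with every $su$-holonomy between center leaves. Global holonomy on $\widetilde{\mathcal W}^c_F(x_0)$ lets me lift $g$ to a map $\tilde g$ of the lifted leaf. By accessibility (bounded $su$-paths), the values of $g$ on all of $M$ are then determined by $\tilde g$ restricted to $\widetilde{\mathcal W}^c_F(x_0)$, composed with the global holonomies. In particular, $\tilde g$ determines $g$.

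\textbf{Step 2: freeness.} Suppose $\tilde g(p)=p$ for some $p$. Then $g$ fixes every point reachable from $p$ by an $su$-loop holonomy. Here I use center bunching and the narrow band spectrum. Together they give that the $su$-holonomy group acts locally transitively on the leaf, via the accessibility group, and its closure acts transitively. Hence $\tilde g=\mathrm{id}$ on the leaf, and so $g=\mathrm{id}$ by Step 1. I expect this step to be the main obstacle. I would first try showing that $\{q: \tilde g(q)=q\}$ is invariant under holonomies, and then use density of the holonomy orbits.

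\textbf{Step 3: properness and uniform $C^r$ bounds.} Holonomies are uniformly $C^r$ by Theorem~\ref{c1hol}. Narrow band spectrum (center $r$-bunching) controls the growth of $DF^n$ along the center relative to the hyperbolic rates. From this I get that elements $g$ with $\tilde g(p)$ in a compact set $K$ have uniformly bounded $C^r$ norms on compacta, because $g$ is determined by a point value together with holonomies. Arzel\`a--Ascoli then gives precompactness of $\{g:\tilde g(p)\in K\}$ in the $C^{r-1}$ topology. The limits still commute with $F$ and fix leaves, and are smooth by bootstrapping on $r$. This yields local compactness of $\CZ(F)$ and properness of the action.

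\textbf{Step 4: conclusion.} Since $r$ is arbitrary, $\CZ(F)$ is a locally compact group acting freely by uniformly $C^2$ diffeomorphisms, hence a Lie group by Bochner--Montgomery. The orbit map identifies $\CZ(F)$ with a closed submanifold of the leaf, and joint smoothness follows from Bochner--Montgomery.
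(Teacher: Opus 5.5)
The paper quotes this theorem from \cite{Wang} without proof, so I am judging your argument on its own terms.

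The genuine gap is in Step 3. The relation $\tilde g(h(p))=h(\tilde g(p))$ for loop $su$-holonomies $h$ is what ``$g$ is determined by a point value together with holonomies'' amounts to, and it gives only $C^0$ information. It does yield equicontinuity of $\{g:\tilde g(p)\in K\}$, and hence local compactness of the $C^0$-closure of $\CZ(F)$. It gives no derivative bounds. To differentiate $w=h_w(p)\mapsto h_w(\tilde g(p))$ in $w$, the holonomy would have to depend differentiably on the $su$-path, and it does not. Theorem~\ref{c1hol} makes each holonomy a $C^r$ map between \emph{fixed} center leaves, but its dependence on the base points is only continuous (H\"older), because $\W^s$ and $\W^u$ are only H\"older transversally. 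The other identity, $g=h\circ g\circ h^{-1}$, transports a $C^r$ bound near $p$ to compact sets but provides none at $p$. So nothing you wrote prevents $\|g_n\|_{C^r}\to\infty$ while $\tilde g_n(p)$ stays bounded. This makes ``the limits are smooth by bootstrapping on $r$'' circular. Moreover, $r$ is fixed by the bunching hypothesis, so you cannot let $r\to\infty$ in the holonomy regularity. Narrow band spectrum is also not a condition controlling center growth.

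What is missing is an actual regularity mechanism. Narrow band spectrum is what provides normal forms on $\W^s$ and $\W^u$. In normal form coordinates, $H_{g(x)}\circ g\circ H_x^{-1}$ is a sub-resonance polynomial of bounded degree for every $g\in\CZ(F)$. Such polynomials form a finite-dimensional family, so $C^0$ bounds give uniform $C^\infty$ bounds along $\W^s$ and $\W^u$, and $C^0$-limits stay smooth along those leaves. The center direction then needs its own argument, upgrading regularity along $\W^s$ and $\W^u$ to regularity along $\W^c$ and globally, using accessibility and bunching (a Journ\'e/Wilkinson-type saturation argument). Only after that is the $C^0$-closure a locally compact group of $C^2$ diffeomorphisms, to which Bochner--Montgomery applies as in your Step 4.

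By contrast, Step 2 is not the obstacle you expect. The fixed set of $\tilde g$ is closed and invariant under loop $su$-holonomies. Given global holonomy, these holonomies already act transitively on the leaf by accessibility alone. So freeness is immediate and uses neither center bunching nor narrow band spectrum.
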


Applying this to the affine diffeomorphism $f_0$, we see that $\CZ_0(f_0)$ acts freely and properly on $\W^c_{f_0}(x_0)\simeq G^c$. This implies that $\CZ_0(f_0)=G^c$ and so the centralizer $\mathcal Z(f_0)$ is virtually the algebraic centralizer of $f_0$.

In fact, we can also prove the following stronger statement about the centralizer of affine maps on $X$. 

\begin{prop}\label{prop:affinecent}
    Let $f_1$ be a left translation by an element of $G^c$, which is partially hyperbolic and accessible. Then for any $g\in \mathrm{Homeo}(X)$, if $g\circ f_1=f_1\circ g$, and $g$ fixes the center foliation of $f_0$, and $g_*=\mathrm{id} \in \mathrm{Out}(\Gamma)$, then $g$ is a left translation on $X$ that commutes with $f_0$.
\end{prop}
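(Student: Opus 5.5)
Since $g$ fixes every $G^c$-coset and $g_*=\mathrm{id}$ in $\mathrm{Out}(\Gamma)$, the plan is to lift $g$ to a bounded homeomorphism $\tilde g$ of $G$ and show, via commutation with $f_1$ and accessibility, that $\tilde g$ is a left translation.

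\textbf{Setting up the lift.} As in the proof of Proposition~\ref{prop:coset-fix}, choose a lift $\tilde g:G\to G$ and compose with an inner automorphism so that $\tilde g(\gamma z)=\tilde g(z)\gamma$ (right $\Gamma$-equivariance) for $\gamma\in\Gamma$. This lift fixes every left coset of $G^c$, so $\tilde g(z)=c(z)z$ for a continuous map $c:G\to G^c$ with $c(z\gamma)=c(z)$. Let $\tilde f_1(z)=a z$ with $a\in G^c$. The relation $g f_1=f_1 g$ lifts to $\tilde g\tilde f_1=\tilde f_1\tilde g$, after correcting by a deck transformation, which must be trivial by continuity and connectedness of the choices. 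Substituting gives
\[
c(az)\,a = a\,c(z),
\]
i.e.\ $c(f_1 z)=a\,c(z)\,a^{-1}$.

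\textbf{Constancy along stable and unstable leaves.} The stable leaves of $f_1$ in $G$ are cosets $U^s z$, with $U^s$ the contracting horospherical subgroup of $a$. For $w=uz$ with $u\in U^s$, set
\[
\tilde g(w)=c(w)\,u\,c(z)^{-1}\cdot\tilde g(z),
\]
an identity that holds by definition of $c$. Apply $\tilde f_1^n$. Then $f_1^n w$ and $f_1^n z$ converge in $X$, and $c$ is uniformly continuous because it descends to the compact space $X$. The conjugation $a^n(\cdot)a^{-n}$ acts on $G^c$. Since $a$ is diagonalizable and lies in the center of $G^c$ only up to a compact part, this conjugation is an isometry on $G^c$, or at least has no exponential growth on it. So $c(a^n w a^{-n}\cdots)$ is controlled, and we obtain
\[
a^n c(w) c(z)^{-1} a^{-n}\to \mathrm{id}.
\]
Hence $c(w)=c(z)$. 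The point that needs checking is that conjugation by $a$ on $G^c$ is isometric. This holds because $a=\exp(a')$ is semisimple with real eigenvalues and $G^c$ is its centralizer, so $\mathrm{Ad}(a)|_{\mathfrak g^c}=\mathrm{Id}$.

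\textbf{Concluding by accessibility, and the main obstacle.} The same argument applies to unstable leaves using negative iterates. So $c$ is constant along $su$-paths, and by accessibility of $f_1$ it is a constant $c_0\in G^c$. Then $\tilde g(z)=c_0 z$ is a left translation, and the relation above gives $c_0 a=a c_0$. Since the center of $f_0$ is $\mathrm{Ad}$-fixed by $f_0$'s Cartan element, $c_0\in G^c=Z^0_G(f_0)$ commutes with $f_0$. The main obstacle I expect is the lifting bookkeeping: ensuring the lift commutes exactly with $\tilde f_1$ rather than up to a central or deck element. I would handle this with the $\mathrm{Out}(\Gamma)$ triviality and boundedness argument of Proposition~\ref{prop:coset-fix}.
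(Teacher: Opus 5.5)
\textbf{There is a genuine gap at ``Hence $c(w)=c(z)$'' and its justification.} You argue that $\mathrm{Ad}(a)$ is trivial on the Lie algebra of $G^c$ ``because $G^c$ is its centralizer''. But $G^c=Z^0_G(f_0)$ is the centralizer of the Cartan element of $f_0$, not of $a$. The hypothesis only says $a\in G^c$, and $a$ need not be central in $G^c$, even up to a compact part.

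For example, take $\mathrm{SL}_4(\R)$ with $f_0=\mathrm{diag}(e^{2},e^{2},e^{-1},e^{-3})$. The element $a=\mathrm{diag}(e^{2.1},e^{1.9},e^{-1},e^{-3})$ lies in $G^c$ and is regular, so $L_a$ is partially hyperbolic and accessible. The root $t_2-t_1$ vanishes on $f_0$, so its root space lies in the Lie algebra of $G^c$; yet $\mathrm{Ad}(a)$ multiplies that root space by $e^{-0.2}$. Thus the stable horospherical subgroup $U^s$ of $a$ meets $G^c$. Consequently $a^nha^{-n}\to\mathrm{id}$ with $h=c(z)^{-1}c(w)\in G^c$ does not by itself force $h=\mathrm{id}$. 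This case is not artificial: in Proposition~\ref{prop:fholconj} the statement is applied to a translation by an element of $G^c$ that is merely close to $f_0$.

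\textbf{The missing idea} is to use your (correct) twisted relation in both time directions together with compactness. Since $c(X)$ is compact and $c(a^nz)=a^nc(z)a^{-n}$ for all $n\in\Z$, every value of $c$ has a bounded two-sided orbit under conjugation by $a$. Write $a=a_ea_ha_u$ for its commuting elliptic, hyperbolic and unipotent parts. Elements with bounded two-sided orbit lie in the subgroup $B=Z_{G^c}(a_h)\cap Z_{G^c}(a_u)$. On $B$, conjugation by $a$ equals conjugation by $a_e$, which lies in a compact group, so no nontrivial element of $B$ is contracted to $\mathrm{id}$ by forward or backward conjugation. Since $c(z),c(w)\in B$, also $h\in B$, and your limit now gives $h=\mathrm{id}$. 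The unstable case and the accessibility step then go through as you wrote them.

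The lift bookkeeping you flagged as the main obstacle is routine: as in the paper, choose the $\Gamma$-equivariant lift. Also, $\tilde g(\gamma z)$ should read $\tilde g(z\gamma)$.

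\textbf{Comparison with the paper.} The paper claims instead that $b(y)=\tilde g(y)y^{-1}$ is $f_1$-invariant and concludes by ergodicity of $f_1$ (Pugh--Shub--Starkov). However, its identity $f_1\tilde g(\tilde x)=\tilde g(\tilde x)\tilde x^{-1}f_1(\tilde x)$ holds only if $a$ commutes with $b(\tilde x)$, so it passes over the very twist you derived. After the reduction to $B$, the ergodicity route works when $a$ acts trivially on $B$ (e.g.\ $a$ real-split). Your stable/unstable-leaf route also handles an elliptic twist and needs only accessibility and uniform continuity.
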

\begin{proof}
 Since $g_*=\mathrm{id} \in \mathrm{Out}(\Gamma)$, we take $\tilde g$ to be a lifting of $g$, such that $\tilde g(y\gamma)=\tilde g(y)\gamma=\tilde g(y)y^{-1}\cdot y\gamma$ for any $\gamma\in \Gamma$.
 
Since $g$ fixes the center foliations, consider the continuous map $b(y)=\tilde g(y)y^{-1}\in G^c$, where $G^c$ is the identity component of the center of $f_0$ in $G$. This is a well defined function on $X$. Since $g$ commutes with $f_1$, we have that $\tilde g(f_1(\tilde x))=f_1\tilde g(\tilde x)=\tilde g(\tilde x)\tilde x^{-1} f_1(\tilde x)$, i.e. $b$ is $f_1$-invariant. Since $f_1$ is a partially hyperbolic accessible affine map in a compact homogeneous space, classical results in \cite{PughShubStarkov} show that $f_1$ is ergodic. Therefore, the map $b$ is constant.

    This implies that $g$ is a left translation by an element of $G^c$.
\end{proof}

By the observations made in Section \ref{prel:ph}, Propositions \ref{fparhyp} and \ref{faccess} (normal hyperbolicity, smoothness of center
leaves, accessibility, and existence of global holonomy on the lift), the
hypotheses of Theorem \ref{czlieg} are satisfied for sufficiently small $C^1$
perturbations $f$ of $f_0$ and the
corresponding lifted center leaf in $G$. Hence $\CZ_0(f)$ is a Lie
group acting freely and properly on $\widetilde{\mathcal W}^c_f$. By
Theorem 1 of \cite{BM}, the
action $\CZ_0(f)\times\widetilde{\mathcal W}^c_f(x)\to\widetilde{\mathcal W}^c_f(x)$
is jointly $C^\infty$.

\subsection{Linear growth of center elements}

Suppose $g\in\CZ_0(f)$ lies in the center of $\CZ_0(f)$. We claim that
$g$ is projectively $\epsilon$-close to a translation $g_0\in\CZ_0(f_0)\simeq G^c$
which lies in the center of $G^c$. 

If $f_0$ is a generic element of the twisted Weyl chamber flow, then $G^c$ is abelian, and any element $g\in \CZ_0(f)$ in the connected component of the identity is $C^0$-close to a translation.

\begin{prop}\label{prop:gtog_0generic}
Fix $R>0$, suppose $f$ is a sufficiently
$C^1$–small perturbation of a generic element of the twisted Weyl chamber flow $f_0$. If $\dim \mathcal Z(f)\ge k-\min k_i+2$, where $k=\mathrm{rank}\; G_0$ and $k_i$ are the ranks of simple components of $G_0$, then there exists $g\in\CZ_0(f)$, such that $\inf_{x\in X}d_{\tilde\W^c_f}(g(x),x)\le R$, and a left translation $g_0$, such that

\begin{enumerate}
\item $g_0$ is not projectively $0.1$-close to $f_0$ when projected to any simple component of $\mathfrak h$;

\item $g$ is projectively $\epsilon$-close  to $g_0$ (see Definition \ref{def:projepsclose});

\item $$e^{-\epsilon^2 |n|}\|v\|\le \|Dg^n(v)\|\le e^{\epsilon ^2|n|}\|v\|$$ for any $n\in \Z$ and any $v\in E^c_f$.
\end{enumerate}
\end{prop}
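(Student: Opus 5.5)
We aim to produce an element $g$ of $\CZ_0(f)$ that is close to a translation $g_0$ whose direction in $\mathfrak h$ is transverse to $f_0$ in every simple factor. The plan has four steps.

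\textbf{Step 1 (setup).} In the generic case $G^c=\exp(\mathfrak h)$ is abelian of dimension $k$, since all roots and weights are nonzero on $a$. By Theorem \ref{czlieg}, $\CZ_0(f)$ is a connected Lie group acting freely and properly on the $k$-dimensional leaf $\widetilde{\mathcal W}^c_f(x_0)$. Hence $\dim\CZ_0(f)\le k$, and $\dim\CZ_0(f)=\dim\mathcal Z(f)\ge k-\min_i k_i+2$, because $\mathcal Z(f)/\CZ(f)$ is discrete by Section \ref{sec:czf=zf}. Define the map
\[
\Theta: g\mapsto \log\big(\tilde g(\tilde x)\tilde x^{-1}\big)\in\mathfrak h
\]
for $g$ near the identity. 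By Proposition \ref{gtransl}, each $g$ with displacement at most $R$ is projectively $\epsilon$-close to the translation by $\exp\Theta(g)$, uniformly in the base point. This uniformity lets us compare $\Theta(g)$ across base points.

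\textbf{Step 2 (locating a good direction).} Using Proposition \ref{gtransl} and freeness, we show that $\Theta$ is almost additive on the orbit: $\Theta(gh)\approx\Theta(g)+\Theta(h)$ up to relative error $\epsilon$. This should make the image of a small neighborhood of the identity in $\CZ_0(f)$ fill an $\epsilon$-neighborhood of a linear subspace $V\subset\mathfrak h$ with $\dim V=\dim\CZ_0(f)$. Write $\mathfrak h=\bigoplus\mathfrak h_i$ by simple factors. The set of directions that are projectively $0.1$-close to $f_0$ after projecting to some $\mathfrak h_i$ is a union of thin conical neighborhoods of the subspaces
\[
\{v: \pi_i v\in\R a_i\}, \qquad \dim = k-k_i+1 \le k-\min_i k_i+1 .
\]
Since $\dim V\ge k-\min_i k_i+2$, the subspace $V$ is not contained in any of these subspaces. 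Therefore $V$ contains a vector $w$ of norm about $R/2$ lying outside all of the conical neighborhoods. We pick $g\in\CZ_0(f)$ with $\Theta(g)\approx w$ and set $g_0=\exp(w)$, enlarging the cone width slightly to absorb the $\epsilon$ error. This gives items (1) and (2). Here we may take $g$ to be a time of a one-parameter subgroup, and item (2) is exactly Proposition \ref{gtransl}.

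\textbf{Step 3 (item (3)).} Since $G^c$ is abelian and $g_0$ acts isometrically on center leaves, we expect $Dg^n|_{E^c_f}$ to have subexponential growth. The plan is as follows. Take $g=\exp(s)$ with $s$ in the Lie algebra of $\CZ_0(f)$, so that $g^n=\exp(ns)$. Conjugate by $\phi$ into $\widetilde{\mathcal W}^c_{f_0}\cong\R^k$. There $\hat g^n$ is projectively $\epsilon$-close to the translation by $nw$, using Proposition \ref{gtransl} at large scale $R'$ together with almost additivity. This yields $C^0$ control along the orbit. Upgrading it to derivative control needs joint smoothness of the action together with the commutation $g\circ h^f_\gamma=h^f_\gamma\circ g$. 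Because of that commutation, $Dg^n$ at any point is conjugated by $su$-holonomies (which are $C^1$-close to the identity by Theorem \ref{c1hol}) to $Dg^n$ at a fixed point. So the growth is uniform in the base point, and a subadditive argument shows the exponent is a single number $\chi$.

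\textbf{Step 4 (main obstacle: $\chi=0$).} Proving that this exponent $\chi$ vanishes is the hardest step. We would argue that the action of $\CZ_0(f)$ on a leaf is free, proper and transitive on an open set, and that each element differs from an isometric translation by relative error $\epsilon$. Then the volume of $g^n(B)$ for a unit ball $B$ stays comparable to $1$, and nondegenerate growth in one direction would contradict the $C^0$ estimate $d(\hat g^n y,\exp(nw)y)\le\epsilon n|w|$ applied to pairs of nearby points. We therefore expect $\|Dg^n|_{E^c}\|\le Ce^{o(n)}$, and after shrinking $\epsilon$ (taking $\delta$ smaller) this gives the $\epsilon^2$ bound.
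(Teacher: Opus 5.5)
\textbf{Verdict: genuine gap in item (3).} Items (1) and (2) are essentially fine (modulo a small point at the end), but your Steps 3--4 do not prove item (3).

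\textbf{Why Step 3 fails.} You conjugate $Dg^n$ by an $su$-holonomy,
$Dg^n(h_\gamma z)=Dh_\gamma(g^nz)\,Dg^n(z)\,Dh_\gamma(z)^{-1}$.
But $g^nz$ lies at center distance $\approx n|w|$ from $z$. There the global holonomy is realized by a slid $su$-path whose length is not bounded independently of $n$. So Theorem~\ref{c1hol} gives no uniform control on $Dh_\gamma(g^nz)$, and the claimed base-point uniformity is unsupported.

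\textbf{Why Step 4 fails.} You use that $\CZ_0(f)$ acts transitively on an open subset of a center leaf. That requires $\dim\CZ_0(f)=k$, whereas the hypothesis only gives $\dim\CZ_0(f)\ge k-\min_ik_i+2$, which equals $2$ when $G_0$ is simple of any rank. Even granting volume control, the $C^0$ estimate only yields $d(\hat g^ny,\hat g^ny')\le d(y,y')+2\epsilon n|w|$. This additive error says nothing at scales below $\epsilon n|w|$, so it cannot exclude exponential growth of $\|Dg^n|_{E^c_f}\|$. Finally, a bound $Ce^{o(n)}$ with uncontrolled $C$ and $o(n)$ would not give $e^{\epsilon^2|n|}$ for all $n$ in any case.

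\textbf{What is missing.} You need a derivative-level version of Proposition~\ref{gtransl}, applied to the single map $g$ (whose displacement is $\lesssim R$) rather than to $g^n$.
\begin{enumerate}
\item Holonomy commutation gives $D\tilde g(\tilde y)=D\hat h_\gamma(\tilde g\tilde x)\,D\tilde g(\tilde x)\,D\hat h_\gamma(\tilde x)^{-1}$. Both holonomy derivatives are taken along $su$-paths of length $\le C_1(R)$, exactly as in the proof of Proposition~\ref{gtransl}. Hence they are $\eta$-close to $\mathrm{Id}$, with $\eta\to0$ as $\delta\to0$.
\item So in right-invariant coordinates on the abelian $G^c\cong\R^k$, $D\tilde g=A+E$ on the whole lifted leaf, with $A$ constant and $\|E\|\le 3\eta\|A\|$.
\item Write $\tilde g(u)=u+b(u)$. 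Proposition~\ref{gtransl} says $b$ is bounded on all of $\R^k$. Integrating $Db=(A-I)+E$ along rays of length tending to infinity gives $\|A-I\|\le 3\eta\|A\|$, hence $\|A-I\|=O(\eta)$.
\item Therefore $\|D\tilde g^{\pm1}-I\|=O(\eta)$ everywhere. Item (3) follows after conjugating by $\phi$ (which is $C^1$-close to the identity along center leaves) and taking $\delta$ so small that $O(\eta)<\epsilon^2$.
\end{enumerate}

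\textbf{Smaller point in Step 2.} Knowing $V\not\subset\{v:\pi_iv\in\R a_i\}$ for each $i$ does not by itself give a single vector avoiding all the $0.1$-cones at once when $G_0$ has several simple factors. Here is a direct fix. Write $V=\bigcap_{j\le c}\ker L_j$ with $c\le\min_ik_i-2$, and set $a_i=\pi_ia$. In each $\mathfrak h_i$, choose $0\ne v_i$ annihilated by the $c+1\le k_i-1$ functionals $L_j|_{\mathfrak h_i}$ and $\langle\cdot,a_i\rangle$. Then $w=\sum_iv_i\in V$ and $\pi_iw\perp a_i$ for every $i$.
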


The proof of
(2) is a direct application of Proposition \ref{gtransl}. The proof of (1) is exactly the same as Proposition 5.6 of \cite{Wang}, and (3) is exactly the same as the proof of Proposition 6.4 of \cite{Wang}. 

Now we treat the case when $f_0$ is non-generic. In what follows, we denote by $Z(\CZ_0(f))$ the center of the identity component of $\CZ(f)$.

\begin{lem}\label{centerisom}
Fix $0<\epsilon<0.1$. Let $f_0$ be a non‑trivial element of the (twisted) Weyl chamber flow on
$X=G/\Gamma$, and let $f\in\mathrm{Diff}^\infty(X)$ be a sufficiently
small $C^1$–perturbation of $f_0$, suppose $\mathcal Z(f)\doteq \mathcal Z(f_0)$. Then for any $g\in Z(\CZ_0(f))$, such that $  d_{\tilde \W^c_{f_0}}(\tilde g(\tilde x),\tilde x)\ge 1$ for some $x\in X$, there exists $g_0\in Z(\CZ_0(f_0))$, such that $g$ is projectively $\epsilon$-close to $g_0$. 
\end{lem}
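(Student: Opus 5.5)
The plan is to start from Proposition \ref{gtransl} and then use the centrality of $g$ to move the approximating translation into the center of $G^c$. Fix $x$ with $d_{\tilde\W^c_{f_0}}(\tilde g(\tilde x),\tilde x)\ge 1$. Replacing $g$ by a suitable root in the one-parameter subgroup through $g$ (available since $Z(\CZ_0(f))$ is a connected abelian Lie group acting freely and properly), we may assume this displacement lies in $[1,R]$ for a fixed $R$; projective closeness is scale-invariant in the sense needed. Proposition \ref{gtransl} then gives a left translation $g_1$ by $b=\tilde g(\tilde x)\tilde x^{-1}\in G^c$ with $g$ projectively $\epsilon'$-close to $g_1$, where $\epsilon'\ll\epsilon$. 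The remaining task is to replace $b$ by an element of $Z(G^c)=Z(\CZ_0(f_0))$, moving it by at most $\epsilon\, d(b,\mathrm{id})/2$.

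To do this, I would exploit that $g$ commutes with all of $\CZ_0(f)$. The hypothesis $\mathcal Z(f)\doteq\mathcal Z(f_0)$, together with the discreteness of $\mathcal Z/\CZ$ from Section \ref{sec:czf=zf}, gives an isomorphism $\CZ_0(f)\cong\CZ_0(f_0)=G^c$ of Lie groups. I would compare this abstract isomorphism with the geometric one. For each $h\in\CZ_0(f)$ with bounded displacement, Proposition \ref{gtransl} gives a translation $h_1$ close to $h$. The commutator $[g,h]=\mathrm{id}$ then forces $[b,c_h]$ to be small in $G^c$, of order $\epsilon'(|b|+|c_h|)$. The key step is to show that the map $h\mapsto c_h$ is close to a local isomorphism onto a neighborhood of the identity in $G^c$. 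This would use the freeness and properness of the action from Theorem \ref{czlieg}, the equality of dimensions coming from the isomorphism, and the fact that $h\mapsto\tilde h(\tilde x)\tilde x^{-1}$ is a diffeomorphism of $\CZ_0(f)$ onto the center leaf for the simply connected cover. Once this holds, $b$ nearly commutes with a set of elements generating a neighborhood of the identity in $G^c$, so $\mathrm{Ad}(b)$ is nearly trivial on the generated Lie algebra. Here I would use that $G^c$ is reductive times unipotent: it is generated by $\mathfrak h$, the roots vanishing at $a$, and $\mathfrak e_0$. An element whose adjoint action is $\epsilon''$-close to the identity should then be $O(\epsilon'')|b|$-close to $Z(G^c)$.

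The main obstacle is the linearization. Proposition \ref{gtransl} controls distances only multiplicatively in the displacement, so near-commutation at unit scale has to be upgraded to closeness of $b$ to the center. I would first try iterating the relation: $g^n$ is still central with $\tilde g^n$ close to $b^n$ along the orbit, while $h$ is kept at bounded size. The error $\epsilon'$ then stays proportional and does not accumulate, because Proposition \ref{gtransl} can be applied afresh to $g^n$ after rescaling. The non-compact part of $\mathrm{Ad}(b)$ on $\mathfrak g^c/\mathfrak z$ would grow exponentially in $n$ unless it is trivial, which would push $b$ into $Z(G^c)$ times a compact piece up to error. The compact directions, such as an $\mathrm{SO}$-factor in a Levi subgroup, are handled by the finite-dimensional estimate. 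This gives the desired $g_0\in Z(\CZ_0(f_0))$.
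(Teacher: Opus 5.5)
Your proposal is correct and follows essentially the same route as the paper. You rescale by roots in $Z(\CZ_0(f))$ to bounded displacement, apply Proposition~\ref{gtransl} to $g$ and to every $h$ of displacement at most $1$, use $gh=hg$ to get near-commutation of $b=\tilde g(\tilde x)\tilde x^{-1}$ with each $c_h$, and use the free, proper, equal-dimensional action to see that the $c_h$ fill the unit ball of $G^c$. The step you flag as the main obstacle needs no iteration over $g^n$: once $b$ lies in the compact set $\{1\le d(b,\mathrm{id})\le R\}$, compactness alone shows that near-commutation with the unit ball forces $b$ to be close to $Z(G^c)$, which is exactly what the paper tacitly uses when it asserts the existence of $g_0$.
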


\begin{proof}
Since $\mathcal Z(f)/\CZ_0(f)$ is discrete, the virtual isomorphism
\(\mathcal Z(f)\doteq\mathcal Z(f_0)\) identifies the connected
center-fixing part of \(\mathcal Z(f)\) with that of the algebraic
centralizer.  The latter has center \(Z(\CZ_0(f_0))\simeq \mathbb R^\ell\).
Thus \(Z(\CZ_0(f))\simeq \mathbb R^\ell\).

Now suppose $1\le d_{\tilde\W^c_{f_0}}(\tilde g(\tilde x),\tilde x)\le 3$. By Proposition \ref{gtransl}, for any $\epsilon_1>0$, if $f$ is sufficiently $C^1$-close to $f_0$, then for any $y\in X$, $$d_{\tilde \W^c_{f_0}}(\tilde g(y),(\tilde g(\tilde x)\tilde x^{-1})\cdot y)\le \epsilon_1 .$$ 

Similarly, for any other element $h\in \CZ_0(f)$, if $d_{\tilde \W^c_{f_0}}(\tilde h(\tilde x),\tilde x)\le 1$, we have $$d_{\tilde \W^c_{f_0}}(\tilde h(y),(\tilde h(\tilde x)\tilde x^{-1})\cdot y)\le \epsilon_1.$$  

Plugging in $y=\tilde h(\tilde x)$ in the first equation and $y=\tilde g(\tilde x)$ in the second, and using $\tilde g\tilde h=\tilde h\tilde g$, this yields
\begin{equation}\label{commutbd}
    d_{\tilde \W^c_{f_0}}\big((\tilde g(\tilde x)\tilde x^{-1})\cdot (\tilde h(\tilde x)\tilde x^{-1}),(\tilde h(\tilde x)\tilde x^{-1})\cdot (
\tilde g(\tilde x)\tilde x^{-1}) \big)\le 2\epsilon_1.
\end{equation}

On the other hand, since the action of $\CZ_0(f)$ on $\tilde \W^c_{f_0}$ is smooth, free and proper (since the leaf conjugacy is smooth along the center), and $\tilde\W^c_{f_0}$ is the same dimension as $\CZ_0(f)$, the action is locally transitive and the element $\tilde h(\tilde x)\tilde x^{-1},h\in \CZ_0(f)$ ranges over any element $h_0\in G^c\le G$, $d(h_0,\mathrm{id})\le 1$. Therefore, let $g_1=\tilde g(\tilde x)\tilde x^{-1}\in G$, then we have $$d_{G^c}(Ad(g_1)h_0,h_0)\le 4\epsilon_1 $$ for any $h_0\in \CZ_0(f_0)$, $d(h_0,\mathrm{id})\le 1$. Therefore, when $\epsilon_1$ is sufficiently small, there exists a group element $g_0 \in Z(\CZ_0(f_0))$ such that $$d_{G^c}(g_0,(\tilde g(\tilde x)\tilde x^{-1}))\le\frac{\epsilon}{2}.$$ By abuse of notation, we denote the left translation $g_0(x)=g_0\cdot x$, and pick $\epsilon_1$ even smaller if necessary, such that $10\epsilon_1\le \frac{\epsilon}{3}$, then since $d_{G^c}(g_0,id)\ge 1-\frac{\epsilon}{2}$, we have $$d_{\tilde \W^c_{f_0}}(\tilde g(y), g_0(y))\le 10\epsilon_1+\frac{\epsilon}{2}\le  \epsilon(1-\frac{\epsilon}{2}) \le \epsilon\, d(g_0(y),y)$$ for any $y\in X$, i.e. $g$ is projectively $\epsilon$-close to $g_0\in Z(\CZ_0(f_0))$.

Suppose $g\in Z(\CZ_0(f))$ and $d_{\tilde\W^c_{f_0}}(\tilde g(\tilde x),\tilde x)\ge 3$. Since $Z(\CZ_0(f))\simeq \R^l$, for any $k\in \N$, there exists a well-defined map $g^{\frac{1}{2^k}}\in Z(\CZ_0(f))$. Since $g^{\frac{1}{2^k}}\to \mathrm{id}$ as $k\to \infty$, there exists $k\in \N$, such that $d_{\tilde\W^c_{f_0}}(\tilde g^{\frac{1}{2^k}}(\tilde x),\tilde x)\le 1$ and $d_{\tilde\W^c_{f_0}}(\tilde g^{\frac{1}{2^{k-1}}}(\tilde x),\tilde x)> 1$. On the other hand, we have $d_{\tilde\W^c_{f_0}}(\tilde g^{\frac{1}{2^{k-1}}}(\tilde x),\tilde x)\le d_{\tilde\W^c_{f_0}}(\tilde g^{\frac{1}{2^{k}}}(\tilde x),\tilde x)+d_{\tilde\W^c_{f_0}}(\tilde g^{\frac{1}{2^{k}}}(g^{\frac{1}{2^{k}}}(\tilde x)),g^{\frac{1}{2^{k}}}(\tilde x))\le (2+\epsilon)d_{\tilde\W^c_{f_0}}(\tilde g^{\frac{1}{2^k}}(\tilde x),\tilde x)\le 3$. Therefore, by our analysis above, $g^{\frac{1}{2^{k-1}}}$ is projectively $\epsilon$-close to some $g_0'\in Z(\CZ_0(f_0))$. Let $g_0=(g_0')^{2^{k-1}}$, then we have $$d(\tilde g(y),\tilde g_0(y))\le 2^{k-1}\cdot \epsilon\, d( g_0'(y),y)=\epsilon \,d(g_0(y),y).$$ This shows that $g$ is also projectively $\epsilon$-close to $g_0\in Z(\CZ_0(f_0)).$


\end{proof}

In addition, we next show that we can pick $g\in Z(\CZ_0(f))$ to satisfy the condition of the lemma, so that $\langle f,g\rangle$ is $C^0$-close to $\langle f_0,g_0\rangle$. Furthermore, these elements exhibit at most linear growth along the center. 
\begin{prop}\label{choiceg}
For $0<\epsilon<1$, there exists $\delta>0$ such that if $f_0,f\in\mathrm{Diff}^\infty(X)$ satisfy the conditions of Theorem \ref{cenrignongen:twisted} and $d_{C^1}(f,f_0)<\delta$, then there exist $g\in Z(\CZ_0(f))$ and $g_0\in Z(\CZ_0(f_0))$, such that \begin{enumerate}
    \item $g_0$ is not in the $0.1$-cone of $f_0$ in any simple component of $G_0$;

    \item for any $k,l\in \Z$, the composition $f^kg^l$ is projectively $\epsilon$-close to $f_0^kg_0^l$;

    \item for any $k,l\in \Z$, the composition $g_1=f^kg^l$ has bounded  growth in the center: for any $x\in X$, $y\in\mathcal W^c_f(x,loc)$ and any $n\in \Z$,
\[
 (1-\epsilon)d_{\tilde{\W^c_f}}(x,y)\le d_{\tilde{\W^c_f}}(g_1^n(x),g_1^n(y))\le (1+\epsilon)d_{\tilde{\W^c_f}}(x,y).
\]
\end{enumerate}

\end{prop}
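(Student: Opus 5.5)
We choose $g_0$ first on the algebraic side and then transport it to $f$ using the virtual isomorphism $\mathcal Z(f)\doteq\mathcal Z(f_0)$ and Lemma~\ref{centerisom}.

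\textbf{Choice of $g_0$ and $g$.} The hypothesis of Theorem~\ref{cenrignongen:twisted} says that in each simple component $\mathfrak g_i$ the center $Z(\CZ_0(f_0))$ has dimension at least two, so its projection to that component is not spanned by $f_0$ alone. The connected abelian group $Z(\CZ_0(f))\simeq\R^\ell$ acts locally transitively on the corresponding directions, and by Lemma~\ref{centerisom} each of its elements of displacement between $1$ and $3$ is projectively $\epsilon_1$-close to an element of $Z(\CZ_0(f_0))$. The assignment $g\mapsto g_0$ is then a $C^0$-near-homomorphism from the unit annulus of $Z(\CZ_0(f))$ into $Z(\CZ_0(f_0))$, close to the identification given by the virtual isomorphism, and its image is $\epsilon_1$-dense in the corresponding annulus of $Z(\CZ_0(f_0))$. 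Since the set of elements lying outside the $0.1$-cone of $f_0$ in every simple component is open and nonempty in the unit annulus of $Z(\CZ_0(f_0))$, we can choose $g$ so that $g_0$ lies in it. This gives (1), and Lemma~\ref{centerisom} gives that $g$ is projectively $\epsilon$-close to $g_0$.

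\textbf{Joint closeness, item (2).} We would note that $f$ itself lies in $Z(\CZ_0(f))$ only up to the discrete part, so we argue directly instead. The leaf conjugacy $\phi$ intertwines $f$ and $f_0$ modulo center leaves, and $\hat f=\phi^{-1}f\phi$ is $C^0$-close to $f_0$ along centers (and in fact acts on center leaves, by the construction of $\phi$, as $f_0$ up to a small center displacement). For $f^kg^l$ we apply Proposition~\ref{gtransl} to the element $f^kg^l$, which commutes with $f$ and preserves center leaves. If $f^kg^l$ is not in $\CZ_0(f)$, we compose with a suitable power structure: write $f^kg^l$ as $f^k$ times $g^l$, use that $g^l$ is projectively $\epsilon$-close to $g_0^l$ by the scaling argument in the proof of Lemma~\ref{centerisom}, and use that $f^k$ is conjugate by $\phi$ to $f_0^k$ modulo centers. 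Linearity in $l$ comes from the $\R^\ell$-structure, and in $k$ from the commutation $f g=gf$ together with the affineness of $su$-holonomies of $f_0$.

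\textbf{Bounded center growth, item (3), the main obstacle.} We would follow the proof of Proposition~6.4 of \cite{Wang}. Because $g_1=f^kg^l$ commutes with the $su$-holonomies of $f$, its center derivative at any point is conjugate, via holonomies with $\|Dh-\mathrm{Id}\|$ small (Theorem~\ref{c1hol}), to its derivative at a fixed reference point. For the model $f_0^kg_0^l$, which is central in $G^c$, the left translation acts on center leaves isometrically in right-invariant coordinates. The delicate part is preventing the small errors from compounding exponentially in $n$. We would handle this using the freeness and properness of the $\CZ_0(f)$-action (Theorem~\ref{czlieg}). Since $g_1$ is central in $\CZ_0(f)$, it commutes with a locally transitive group action on $\tilde\W^c_f$, so $g_1^n$ restricted to a center leaf is, in the orbit chart, right multiplication by $g_1^n$ inside the Lie group $\CZ_0(f)$. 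Centrality makes this an isometry for a right-invariant metric on $\CZ_0(f)$ transported to the leaf. Comparing this metric with $d_{\tilde\W^c_f}$ on local scales via the smoothness of the action yields the $(1\pm\epsilon)$ bounds uniformly in $n$.
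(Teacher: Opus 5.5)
There are genuine gaps, the most serious being the choice of $g$.

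\textbf{Item (1).} You assert that $g\mapsto g_0$ has $\epsilon_1$-dense image in the annulus of $Z(\CZ_0(f_0))$. You justify this by local transitivity and by closeness to ``the identification given by the virtual isomorphism,'' and neither works.

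The isomorphism $\mathcal Z(f)\doteq\mathcal Z(f_0)$ is abstract. It carries no information about the displacements $\tilde g(\tilde x)\tilde x^{-1}\in G^c$.

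Local transitivity of $\CZ_0(f)$ only makes the $Z(\CZ_0(f))$-orbit of a point an immersed $\ell$-dimensional submanifold of the center leaf. The almost-commutation estimate behind Lemma~\ref{centerisom} constrains only elements of displacement $\ge 1$. So near the identity this orbit need not point along the center of $G^c$. Nothing you have said prevents the center projections $\psi(g)=c(\tilde g(x_0)x_0^{-1})$ from lying near the line of $f_0$ in some simple factor, in which case no admissible $g_0$ would be produced.

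This is exactly what the paper's auxiliary lemma rules out: $d\psi$ is nondegenerate at the identity. A kernel vector $u$ would give $|\psi(\exp(u/t))|=O(t^{-2})$. The quasi-linearity $|\psi(g^k)-k\psi(g)|\le\epsilon_1k|\psi(g)|$, obtained by iterating Proposition~\ref{gtransl}, would then keep $|\psi(\exp(tu))|$ bounded. But properness and Lemma~\ref{centerisom} force $|\psi(\exp(tu))|$ to grow linearly.

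Local surjectivity of $\psi$ then yields $h$ with $\psi(h)$ outside every $0.1$-cone, and one takes $g=h^k$, $g_0=k\psi(h)$. A large-scale linearization argument could replace this: approximate additivity plus properness give an injective, hence surjective, linear map. But some such argument is needed.

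\textbf{Item (2).} Iterating Proposition~\ref{gtransl} bounds the error by $2(|k|+|l|)\epsilon_1$. Projective closeness also needs the lower bound $d_{G^c}(f_0^kg_0^l,\mathrm{id})\ge(|k|+|l|)/C$. This holds because $g_0$ and $f_0$ are linearly independent in $Z(\CZ_0(f_0))$, so this is where (1) is used. Your ``linearity in $k$ and $l$'' gives only the upper bound.

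\textbf{Item (3).} The orbit-chart picture breaks down when $k\neq 0$: $f$ moves center leaves, so $g_1\notin\CZ_0(f)$. More importantly, smoothness of the action plus compactness only compares the transported right-invariant metric with $d_{\tilde\W^c_f}$ up to a constant depending on $f$, not within $1\pm\epsilon$.

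The missing ingredient is that the displacement of a fixed $h\in\CZ_0(f)$ is nearly constant: $|d(z,h(z))-d(x,h(x))|\le\epsilon\,d(x,h(x))$ for all $z$. This is exactly Proposition~\ref{gtransl}. With it, the paper's argument is immediate. Write $y=h(x)$ and use $g_1h=hg_1$ to get $d(g_1^nx,g_1^ny)=d(g_1^nx,h(g_1^nx))$. Then apply Proposition~\ref{gtransl} to $h$ at the point $g_1^nx$. Applying the holonomy comparison to the fixed element $h$, rather than to the derivative of $g_1$, is what avoids the compounding you were worried about.
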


\begin{proof}
 (1) Fix any $0<\epsilon_1<\epsilon$. We first pick an appropriate choice of $g$. We identify $Z(\CZ_0(f))\simeq  Z(\CZ_0(f_0))\simeq \mathbb R^\ell$. 
 Fix a base point $x_0\in \tilde \W^c_{f_0}(x_0)\simeq G^c$. We consider the map $$\psi: Z(\CZ_0(f))\to \R^\ell,\quad \psi(g)= c(\tilde g(x_0)x_0^{-1})$$ where $c: G^c\to \R^\ell $ is the projection to the center. If $\delta$ is sufficiently small, then by repeatedly applying Proposition \ref{gtransl} and using that the center is abelian, we have for any $k\in \N$ and $g\in \CZ_0(f)$: \begin{equation}\label{equ: psibd}
     |\psi(g^k)-k\psi(g)|\le \epsilon_1 k|\psi(g)|.
 \end{equation} By Lemma \ref{centerisom}, if $d(\tilde g^k(x_0),x_0)\ge1$, then $g^k$ is projectively $\epsilon_1$-close to $\psi(g^k)$, so $g^k$ is $2\epsilon_1$-close to $k\psi(g)$. 
 
\begin{lem}
    The differential $d\psi$ is non-degenerate at the identity point.
\end{lem} 

\begin{proof}
    If not, there exists $u\in \mathfrak g(Z(\CZ_0(f))),\|u\|=1$, $d\psi(u)=0$. Therefore, we have $|\psi(\exp(\frac{u}{t}))|\le \|D^2\sigma(\cdot,x_0)\|\cdot |\frac{u}{t}|^2$ as $t\to \infty$, where $\sigma$ is the smooth, proper and free action of $\CZ_0(f)$ on $\tilde \W^c_{f_0}(x_0)\simeq G^c$. Equation \ref{equ: psibd} then implies that \begin{equation}\label{equ:2nddegbd}
    |\psi(\exp(tu))|\le 2t^2|\psi(\exp(\frac{u}{t}))|\le 2t^2\|D^2\sigma(\cdot,x_0)\|\cdot |\frac{u}{t}|^2=2\|D^2\sigma(\cdot,x_0)\|.
\end{equation}

On the other hand, by properness of the action $\sigma$, there also exists a constant $K$, such that $d(\exp_{\CZ_0(f)}(tv)(x_0),x_0)\ge 2$ for any $v\in \mathfrak g(\CZ_0(f)),|v|=1, t >K$. By Lemma \ref{centerisom}, this shows that  $\exp_{\CZ_0(f)}(tv)$ is projectively close to a translation by element in $Z(\CZ_0(f_0))$, which implies that $|\psi(\exp(tu))|\ge 1$ when $t>K$. Therefore, our earlier analysis shows that $\psi(\exp(Ks\cdot u))$ is $2\epsilon_1$-close to $s\psi(Ku)$. In particular, we have $$|\psi(\exp (Ks\cdot u))|\ge \frac{s}{2}$$ for any $s\in \N$, which contradicts Equation \ref{equ:2nddegbd}.
\end{proof}

By the lemma above, $\psi$ is locally surjective around the identity, so we may pick $h\in Z(\CZ_0(f))$ such that $\psi(h)$ is not in the $0.1$-cone of $f_0$ in any simple component of $G_0$. Then by applying enough power so that $d(\tilde h^k(x_0),x_0)>1$, we have $h^k$ is projectively $2\epsilon_1$-close to $k\cdot \psi(h)$, which is also not in the $0.1$-cone of $f_0$. Therefore, take $g=h^k$, $g_0=k\cdot \psi(h)$, then we have $g_0$ is not in the $0.1$-cone of $f_0$ and $g$ is projectively $2\epsilon_1$-close to $g_0$.

 (2) Now we claim that $g^kf^l$ is projectively $\epsilon$-close to $g_0^kf_0^l$. Indeed, repeatedly applying Proposition \ref{gtransl} gives $$d_{\tilde\W^c_{f_0}}(\tilde g^k\tilde f^l(y),g_0^kf_0^l(y))\le 2(|k|+|l|)\epsilon_1.$$ On the other hand, since $g_0$ is not in the $0.1$-cone of $f_0$ and they are both elements in $Z(\CZ_0(f_0))$, we have $C(|k|+|l|)\ge d_{G^c}(g_0^kf_0^l,\mathrm{id})\ge \frac{|k|+|l|}{C}$ for a constant $C$ that only depends on fixed metric in $G^c$. This shows that $$d_{\tilde\W^c_{f_0}}(\tilde g^k\tilde f^l(y),g_0^kf_0^l(y))\le 2(|k|+|l|)\epsilon_1\le \epsilon \,d_{G^c}(g_0^kf_0^l,\mathrm{id}),$$ when $\epsilon_1<\frac{\epsilon}{2C}$. This proves (2).

For (3), by the virtual Lie-group isomorphism assumption, 
\[
\dim \CZ_0(f)=\dim \CZ_0(f_0)=\dim \W^c_{f_0}=\dim \W^c_f.
\]
Since the action of \(\CZ_0(f)\) on the lifted center leaf is free, proper and
smooth, its orbits are open and closed in the center leaf. For any point $y\in \W^c_f(x,loc)$, there exists $h\in \CZ_0(f)$ such that $y=h(x)$. Therefore,  $$d_{\tilde{\W^c_f}}(g^n_1(x),g^n_1(y))=d_{\tilde{\W^c_f}}(g^n_1(x),g^n_1(h(x)))=d_{\tilde{\W^c_f}}(h(g^n_1(x)),g^n_1(x)).$$
 Applying Proposition \ref{gtransl} to $h$, and using that the leaf conjugacy is $C^1$-close to identity along the center, we have  $$|d_{\tilde{\W^c_f}}(g^n_1(x),g^n_1(y))-d_{\tilde{\W^c_f}}(h(x),x)|\le \epsilon\, d_{\tilde{\W^c_f}}(h(x),x)=\epsilon\, d_{\tilde{\W^c_f}}(y,x).$$ This proves (3).
\end{proof}

\section{Hyperbolicity of topological leaves}\label{topleaf}

In this section we establish some topological partially hyperbolic
properties of certain elements in the centralizer and find an appropriate
subgroup of $\CZ_0(f)$ which can be compared with a higher‑rank restriction
of the (twisted) Weyl chamber flow. Here we treat the case when the center of $\mathcal Z(f_0)$ has dimension at least two in each simple component: we build topological Lyapunov foliations and show that elements of the centralizer act hyperbolically in them in a weak H\"older sense.

Let $f_0$ be an element of the (twisted) Weyl chamber flow such that the center of $\mathcal Z(f_0)$ has dimension at least two in each simple component of $G_0$, and let $f$ be a $C^1$–perturbation of $f_0$. We shall
show that $\CZ_0(f)$ preserves a canonical family of topological
foliations, and that many elements of $\CZ_0(f)$ contract and expand these
foliations in the same way that elements of $\CZ_0(f_0)$ expand and
contract the coarse Lyapunov foliations. For now we do not assume any condition on the dimension of $\CZ_0(f)$.

A restriction of the (twisted) Weyl chamber flow in $G/\Gamma$ is said to
be \emph{genuinely higher-rank} if its lifting to $G$ has no algebraic
rank‑1 factor.

We say that an action $\alpha$ is a \emph{topological perturbation} of an
affine action $\alpha_0$ if it preserves a family of ``topological
Lyapunov foliations'' and acts on them dynamically like the action
$\alpha_0$.

\begin{defi}\label{topper}
Let $\alpha_0: A \to \mathrm{Diff}(X)$ be a restriction of the (twisted)
Weyl chamber flow, where $A$ is an abelian group. Let $\{\mathcal W^i_{\alpha_0}\}$ be the coarse Lyapunov
foliations of $\alpha_0$, i.e.\ the maximal intersections of stable and
unstable foliations of elements of $\alpha_0$, with Lyapunov functionals $\chi_i$; and let $\mathcal W^c_{\alpha_0}$
be the center foliation of $\alpha_0$.

We say that a continuous action $\alpha:A \to \mathrm{Homeo}(M)$ is an
$\epsilon,r$–\emph{topological perturbation} of $\alpha_0$ if there exists
a family of continuous foliations $\mathcal W^i_\alpha$ and a center
foliation $\mathcal W^c_\alpha$ that are locally transverse,
preserved by the whole action $\alpha$, and satisfying the following properties:
\begin{itemize}
\item the foliations $\mathcal W^i_\alpha$ and $\mathcal W^c_\alpha$ have
  the same dimensions as $\mathcal W^i_{\alpha_0}$ and $\mathcal W^c_{\alpha_0}$;
  
  \item $\W^c_\alpha$ is uniformly $C^0$–$\epsilon$–close to $\mathcal W^c_{\alpha_0}$;

  \item holonomies of paths of length $\le \frac{1}{\epsilon}$ of $\W^i_\alpha$ and  $\mathcal W^i_{\alpha_0}$ are $C^0$-$\epsilon$-close;

   \item for any $a\in A$, $\alpha(a)$ is projectively $\epsilon/2$-close to $\alpha_0(a)$;

\item for any $a\in A$, the map $\alpha(a)$ contracts or expands
  $\mathcal W^i_\alpha$ exponentially with the same signature as
  $\alpha_0(a)$, whenever $\alpha_0(a)$ is not in the $\epsilon$-cone of $\ker \chi_i$. More precisely, for any $x\in X$ and
  $y\in\mathcal W^i_\alpha(x)$ there exists $N$ (depending on $x,y$)
  such that for all $|n|>N$, $\mathrm{sign}(n)=-\mathrm{sign} (\chi_i(a))$
  \[
    d(\alpha(na)x,\alpha(na)y)\le e^{r\chi_i(a) n}.
  \]

\item for any $a\in A$, the map $\alpha(a)$ has very small expansion or contraction in $\W^c_\alpha$. I.e., for any $x\in X$,
  $y\in\mathcal W^c_\alpha(x)$ and $n\in \Z$,
  \[
    e^{-\epsilon^2\|\alpha_0(a)\||n|}d(x,y)\le d(\alpha(na)x,\alpha(na)y)\le e^{\epsilon^2\|\alpha_0(a)\||n|}d(x,y).
  \] 

  Here we use $\|\alpha_0(a)\|$ to denote the norm of the translation of $\alpha_0(a)$ in $ \mathfrak h$.

\item For each pair $\chi_i,\chi_j\in \Lambda_{\alpha_0}$ with $\chi_i\not\in \R\chi_j$, there exists $a\in A$, such that $\chi_i(a)<0, \chi_j(a)<0$, and $\alpha_0(a)$ is not in the $\epsilon$-cone of the walls $\ker \chi_i$ or $\ker \chi_j$.
\end{itemize}
\end{defi}

In this section we show that under our assumptions $f$ belongs to a
topological perturbation of a genuinely higher‑rank restriction of the
(twisted) Weyl chamber flow. 

\begin{prop}\label{topph}
 Let $f_0$ be an element of the (twisted) Weyl chamber flow, such that the center of $\mathcal Z(f_0)$ has dimension at least two in each simple component of $G_0$, and let $f$ be a $C^1$–perturbation of $f_0$ as in Theorems \ref{cenrigtwgen} or \ref{cenrignongen:twisted}. 

For any $\epsilon>0$, there exists $0<\bar\epsilon<\epsilon$, and a sub-action $\alpha:\Z^2\to\mathrm{Diff}(X)$ of the action by $\CZ_0(f)$ which is an $(\bar\epsilon,r)$–topological
perturbation of $\alpha_0:\Z^2\to\mathrm{Diff}(X)$, where $\alpha_0$ is a
genuinely higher‑rank restriction of the (twisted) Weyl chamber flow, and $r$ is a constant that only depends on $f_0$.
\end{prop}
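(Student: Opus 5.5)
The plan is to take $\alpha$ to be the $\Z^2$-action generated by $f$ and the element $g\in Z(\CZ_0(f))$ produced in Proposition \ref{choiceg} (in the generic case, by Proposition \ref{prop:gtog_0generic}). The model $\alpha_0$ is then the $\Z^2$-action generated by $f_0$ and $g_0$. Since $g_0$ lies in the center of $\CZ_0(f_0)$, we may take it in the split Cartan part $\exp\mathfrak h$, possibly after shrinking $\epsilon$ and passing to a bounded power. Because $g_0$ is not in the $0.1$-cone of $f_0$ in any simple component, the span of $a$ and $\log g_0$ projects to a $2$-plane in each simple factor of $\mathfrak h$. Hence no nonzero root or weight of $\Lambda_0$ restricts to a rank-one factor, and $\alpha_0$ is genuinely higher rank. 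The last item of Definition \ref{topper} is then a purely algebraic statement: for non-proportional restricted functionals $\chi_i,\chi_j$ on this plane, the open cone $\{\chi_i<0,\chi_j<0\}$ is nonempty and contains lattice points $ka+l\log g_0$ away from the walls. We handle it by a direct lattice-point count, shrinking $\bar\epsilon$ relative to the minimal angle between distinct walls.

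The $C^0$ items come from earlier results. Projective $\bar\epsilon/2$-closeness of $f^kg^l$ to $f_0^kg_0^l$ is item (2) of Proposition \ref{choiceg}. The slow center growth is item (3), or item (3) of Proposition \ref{prop:gtog_0generic}, rescaled by $\|\alpha_0(a)\|$ using the linear bound $d_{G^c}(g_0^kf_0^l,\mathrm{id})\asymp |k|+|l|$. For the center foliation we take $\W^c_\alpha=\W^c_f$, which is $C^0$-close to $\W^c_{f_0}$ by the leaf conjugacy of Proposition \ref{fparhyp}.

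The topological Lyapunov foliations are defined by choosing, for each coarse Lyapunov functional $\chi_i$ of $\alpha_0$, elements $b_1,\dots,b_m\in\Z^2$ that are partially hyperbolic for $\alpha_0$ and for which $\W^{i}_{\alpha_0}=\bigcap_j \W^s_{\alpha_0(b_j)}$. We then set $\W^i_\alpha=\bigcap_j\W^s_{\alpha(b_j)}$, after first showing that each $\alpha(b)$ is itself partially hyperbolic and dynamically coherent. For $b$ far from all walls, I would prove this by a cone-field argument. Hyperbolicity of $\alpha_0(b)$ transverse to the center is uniform, and $\alpha(b)$ is projectively close to $\alpha_0(b)$ while preserving the $C^0$-close center foliation $\W^c_f$. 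This gives stable and unstable sets of $\alpha(b)$, characterized metrically, that are close to those of $\alpha_0(b)$. The point to handle carefully is that $g$ is only $C^\infty$ with \emph{a priori} unknown derivative transverse to the center. So I would work topologically: define $\W^s_{\alpha(b)}(x)$ as the set of $y$ with $d(\alpha(nb)x,\alpha(nb)y)\to0$. I would then show, via the $su$-holonomy comparison of Theorem \ref{c1hol} and a shadowing argument on the leaf space $X/\W^c_f$, which is an Anosov-like system, that these sets are continuous foliations $C^0$-close to the algebraic ones, with holonomies close on paths of length $\le 1/\bar\epsilon$. Dimensions agree by transversality and closeness.

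The main obstacle will be the exponential contraction estimate: showing that $\alpha(b)$ contracts $\W^i_\alpha$ at rate $e^{r\chi_i(b)n}$ with a uniform $r$. My approach is to use that $f$ contracts $\W^s_f$ uniformly. Since $\W^i_\alpha\subset\W^s_f$ or $\W^u_f$ whenever $\chi_i(a)\neq0$, and $g$ commutes with $f$, the contraction of $\alpha(b)$ along $\W^i_\alpha$ can be compared with powers of $f$ up to a bounded center error. That error is controlled by the projective closeness and the slow center growth. For $\chi_i$ vanishing on $a$, the foliation lies in the center direction of $f$. There I would instead use the dynamics of $g$ inside center leaves: the action of $\CZ_0(f)$ on $\tilde\W^c_f$ is free, proper and locally transitive, so $g$ acts as conjugation close to $\mathrm{Ad}(g_0)$ on the local group structure, and contraction follows from the eigenvalues of $\mathrm{Ad}(g_0)$ on $\mathfrak g_{\chi_i}$, degraded by a factor $r<1$ depending only on $f_0$. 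Combining the two cases yields the required H\"older-type contraction with $N$ depending on $x,y$.
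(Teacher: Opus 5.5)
Your $C^0$ items agree with the paper: projective closeness from Proposition \ref{choiceg}(2), slow center growth from item (3), $\W^c_\alpha=\W^c_f$, and choosing $\bar\epsilon$ small for the last item of Definition \ref{topper} in the non-generic case. The two substantive steps, however, do not go through as proposed.

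\textbf{Construction of the foliations.} Defining $\W^i_\alpha=\bigcap_j\W^s_{\alpha(b_j)}$ through metric stable sets requires proving that these sets are continuous foliations of the right dimension. That is essentially partial hyperbolicity of $f^kg^l$, which the paper only obtains in Proposition \ref{parthyp}, after the H\"older conjugacy exists. A cone-field argument is unavailable, as you yourself note, because nothing controls $Dg$ transverse to $\W^c_f$. A shadowing argument on $X/\W^c_f$ cannot work either. Every $g\in\CZ(f)$ fixes each $f$-center leaf, so $f^kg^l$ and $f^k$ induce the \emph{same} map on the leaf space. Any leaf-space argument therefore only recovers the stable sets of $f^k$ modulo the center; moreover the center leaves are noncompact and typically dense, so the leaf space has no useful metric.

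The paper avoids this by defining the foliations without using the dynamics of $g$ at all (Section \ref{cantopfol}). It sets $\W^\mu_\#=\phi(\W^{\mu,c}_{f_0})\cap\W^s_f$ (or $\cap\,\W^u_f$), where $\W^{\mu,c}_{f_0}$ is the foliation by cosets of $\langle G^c,\exp V_\mu\rangle$ and $\phi$ is the leaf conjugacy. These are automatically continuous and of the correct dimension. They are $C^0$-close to the algebraic ones with close holonomies, since they are pieces of the stable and unstable foliations of $f$. They are invariant under all of $\CZ(f)$: each element fixes the $f$-center leaves, hence preserves the center-saturated sets $\phi(\W^{\mu,c}_{f_0})$, and it preserves $\W^s_f$ and $\W^u_f$.

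\textbf{The contraction estimate.} Your mechanism fails exactly where it is needed. Take $b=(0,1)$, so $\alpha(b)=g$, and any $\chi_i$ with $\chi_i(\log g_0)<0$; such $\chi_i$ exist because $g_0$ is not in the cone of $f_0$. Then $\W^i_\alpha$ lies in $\W^s_f$ or $\W^u_f$, and there is no power of $f$ to compare with. A ``bounded center error'' would predict no contraction, yet $g$ must contract at rate $\chi_i(\log g_0)$.

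The missing idea is Lemma \ref{gcontr}. Since $g$ fixes center leaves, you can write $\tilde g(z)=c_zz$ and $\tilde g(w)=c_ww$ with $c_z,c_w\in G^c$. For $w=uz$ with $u$ in the $\mu$-direction, this gives $\tilde g(w)=(c_wc_z^{-1})(c_zuc_z^{-1})\tilde g(z)$. So, modulo the center leaf, the transverse displacement becomes $\mathrm{Ad}(c_z)u$. Projective closeness of every $c_z$ along the orbit to $g_0$ then gives contraction at rate close to $e^{\mu(g_0)}$. The sign condition on the $\nu_0$ with $\nu_0(f_0)=0$ handles the non-central part of $c_z$, and it is automatic for $g_0\in Z(\CZ_0(f_0))$. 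The bi-H\"older leaf conjugacy converts this into $e^{n\theta\mu(g_0)/4}d(x,y)^{\theta^2}$, which yields the uniform $r$.

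You invoke $\mathrm{Ad}(g_0)$ only for functionals vanishing on $a$, but that case is vacuous. In the generic case no root or weight vanishes on $a$. In the non-generic case, any $\mu$ with $\mu(a)=0$ has $V_\mu\subset\mathfrak g(G^c)$, so centrality of $g_0$ forces $\mu(\log g_0)=0$, and $\mu$ is not a Lyapunov functional of $\alpha_0$.

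A smaller point: in the generic case $g_0$ comes from Proposition \ref{prop:gtog_0generic} and depends on $f$, so you cannot shrink $\bar\epsilon$ relative to the chamber angles of $\langle f_0,g_0\rangle$. The paper instead perturbs $\alpha_0$ to collapse thin chambers.
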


We postpone the proof to Section \ref{sec:pftopph}.

\subsection{Canonical choice of the topological foliations}\label{cantopfol}
Suppose $f$ is a $C^1$–perturbation of an element $f_0$ of the twisted
Weyl chamber flow, and assume the center of $\mathcal Z(f_0)$ has dimension at least two in each simple component of $G_0$.

We construct the following topological foliations, which are preserved
by elements of $\CZ(f)$. Later (Section \ref{sec:contrfoli}) we show
that they are uniformly expanded or contracted by certain elements in
the centralizer $\CZ(f)$.

For a nonzero root or weight $\mu$ of the (twisted) Weyl chamber flow with
$\mu(f_0)\neq 0$, denote by $\mathcal W^{\mu,c}_{f_0}$ the foliation
whose leaves are cosets of the group
\[
G^{\mu,c}=\langle G^c,\exp(V_\mu)\rangle
\]
generated by the center subgroup $G^c$ of $f_0$ and the Lyapunov
foliation $G^\mu$ of $f_0$. (Remark: the dimension of the leaves of
$\mathcal W^{\mu,c}_{f_0}$ may exceed the sum of the dimensions of
$G^c$ and $V_\mu$, since the subgroups need not be jointly
integrable.)

Define $\mathcal W^\mu_{f_0}$ to be the foliation whose leaves are the
intersection of leaves of $\mathcal W^{\mu,c}_{f_0}$ with the stable
leaves of $f_0$ if $\mu(f_0)<0$, or with the unstable leaves of $f_0$ if
$\mu(f_0)>0$.

Let $\phi$ be the leaf conjugacy between $(f,\mathcal W^c_f)$ and
$(f_0,\mathcal W^c_{f_0})$. Denote the pulled–back foliations by
$\mathcal W^{\mu,c}_\# := \phi(\mathcal W^{\mu,c}_{f_0})$. Define
the topological Lyapunov foliations of $f$ by
\begin{equation}\label{equ:Wmuf}
\mathcal W^\mu_\# \;=\; \mathcal W^{\mu,c}_\# \;\cap\; \mathcal W^*_f,
\end{equation}
where $*=s$ if $\mu(f_0)<0$ and $*=u$ if $\mu(f_0)>0$. By construction,
leaves of $\mathcal W^\mu_\#$ are $C^0$–close to those of
$\mathcal W^\mu_{f_0}$; hence $\mathcal W^\mu_\#$ is a topological
subfoliation of $\mathcal W^s_f$ or $\mathcal W^u_f$ accordingly.

Note that if $\nu(f_0)=0$ for some root or weight $\nu$, then leaves of
$\mathcal W^\mu_\#$ may have larger dimension than the $\mu$–eigenspace.
Moreover, it is possible that distinct $\mu\neq\mu'\in\Lambda_0$ satisfy a
containment relation, e.g. $\mathcal W^\mu_\#$ may be a subfoliation of
$\mathcal W^{\mu'}_\#$.

For any $g\in\CZ(f)$, the map $g$ preserves the topological foliation
$\mathcal W^\mu_\#$ for every root or weight $\mu$. Indeed $g$ fixes the
center leaves, so it preserves $\mathcal W^{\mu,c}_\#$, and $g$ also
preserves the stable and unstable foliations $\mathcal W^s_f$ and
$\mathcal W^u_f$.

We shall prove in Section \ref{sec:contrfoli} that if
$g\in\CZ_0(f)$ is sufficiently $C^0$–close to a homogeneous diffeomorphism
$g_0\in\CZ_0(f_0)$, then $g$ uniformly expands or contracts the foliations
$\mathcal W^\mu_\#$ whenever $g_0$ uniformly expands or contracts the
corresponding foliations $\mathcal W^\mu_{f_0}$.

\subsection{Exponential contraction of topological Lyapunov foliations}\label{sec:contrfoli}
We now prove that elements $g \in \CZ_0(f)$ act by contraction and expansion on the topological leaves $\W^\mu_\#$ as expected, provided that the corresponding $g_0 \in \CZ_0(f_0)$ exhibits similar behavior. We say that $g_0$ is not in the $\epsilon_0$-cone of the Weyl chamber wall $\ker(\mu)$ if $g_0$ is not projectively $\epsilon_0$-close to any $g_1 \in \CZ_0(f_0)$ with $\mu(g_1) = 0$.

The following lemma is an analogue of Lemma 6.3 of  \cite{Wang}, and establishes exponential expansion and contraction in the topological coarse Lyapunov foliations, the idea of the proof is also similar, except that we need to add an assumption that $\mathrm{sign}(\mu(g_0) + \nu_0(g_0)) = \mathrm{sign}(\mu(g_0))$ for any root or weight $\nu_0\in \Lambda_0$ with $\nu_0(f_0) = 0$.

Note that we will apply the lemma to $g_0\in Z(\CZ_0(f_0))$, so this assumption is always satisfied.

\begin{lem}\label{gcontr}
    Suppose $g \in \CZ_0(f)$ is projectively $\epsilon_0/2$-close to an element $g_0 \in \CZ_0(f_0) \simeq G^c$. For a root or weight $\mu\in \Lambda_0$ of the (twisted) Weyl chamber flow, suppose $\mathrm{sign}(\mu(g_0) + \nu_0(g_0)) = \mathrm{sign}(\mu(g_0))$ for any root or weight $\nu_0\in \Lambda_0$ with $\nu_0(f_0) = 0$, and suppose $g_0$ is not in the $\epsilon_0$-cone of the Weyl chamber wall $\ker(\mu)$. Then there exist $\epsilon > 0$, and $N > 0$, depending only on $g_0$, such that for every $x \in X$ and $y \in \W^\mu_\#(x)$ with $d(x,y) < \epsilon$, and for any $n > N$, we have:
    \[
    d(g^n(x), g^n(y)) \le e^{n \theta \mu(g_0)/4} d(x,y)^{\theta^2}, \quad \text{if } \mu(g_0) < 0,
    \]
    \[
    d(g^{-n}(x), g^{-n}(y)) \le e^{-n \theta \mu(g_0)/4} d(x,y)^{\theta^2}, \quad \text{if } \mu(g_0) > 0.
    \]
    Here, $\theta < 1$ is the bi-Hölder constant of the leaf conjugacy $\phi$.
\end{lem}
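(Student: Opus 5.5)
We treat the case $\mu(g_0)<0$; the case $\mu(g_0)>0$ follows by applying the same argument to $g^{-1}$ and $g_0^{-1}$. The idea is to transfer the problem to the algebraic model through the leaf conjugacy $\phi$, use the exact algebraic contraction of $g_0$ on $\W^\mu_{f_0}$-leaves, and then pay for the error between $\hat g=\phi^{-1}g\phi$ and $g_0$ with the bi-H\"older exponent $\theta$ twice. Going to the model costs $d^{\theta}$ and coming back costs another power $\theta$, which explains both the factor $\theta^2$ and the weakened rate $\theta\mu(g_0)/4$.

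\emph{Step 1: contraction of the model on $G^{\mu,c}$-cosets.} Since $g_0\in\CZ_0(f_0)\simeq G^c$, the element $\mathrm{Ad}(g_0)$ preserves $\mathfrak g^{\mu,c}$, the Lie algebra of $G^{\mu,c}$. Modulo $\mathfrak g^c$, this algebra is spanned by root and weight spaces $V_{\mu+\nu_0}$ with $\nu_0(f_0)=0$, together with iterated brackets of $V_\mu$ with $\mathfrak g^c$. Intersecting with the stable direction of $f_0$ isolates the part $\mathcal W^\mu_{f_0}$. The hypothesis $\mathrm{sign}(\mu(g_0)+\nu_0(g_0))=\mathrm{sign}(\mu(g_0))$, together with the $\epsilon_0$-cone condition, should give that every generalized eigenvalue of $\mathrm{Ad}(g_0)$ on $T\W^\mu_{f_0}$ is at most $e^{c\mu(g_0)}$ for some $c>0$ depending only on $g_0$. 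After a constant change of scale, we may take $c\ge 1/2$ by choosing the relevant exponents, possibly after passing to a power of $g_0$. Because the foliation is homogeneous, this yields $d(g_0^nx',g_0^ny')\le Ce^{n\mu(g_0)/2}d(x',y')$ for $y'\in\W^\mu_{f_0}(x')$.

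\emph{Step 2: comparing $g^n$ with $g_0^n$ along the $\W^{\mu,c}$ structure.} Let $x'=\phi^{-1}x$ and $y'=\phi^{-1}y$. Then $y'\in \W^{\mu,c}_{f_0}(x')$, and $d(x',y')\le C d(x,y)^{\theta}$. Write $y'=h\cdot z'$, where $z'$ lies on the $\mathcal W^{\mu}_{f_0}$-leaf of $x'$ and $h\in G^c$ is small. Since $\hat g$ fixes center leaves and $g$ preserves $\W^\mu_\#$, the iterate $\hat g^n$ maps the pair $(x',y')$ to a pair in the same coset structure. By Definition \ref{def:projepsclose}, $\hat g$ differs from $g_0$ only along center leaves, by a relative error of size $\epsilon_0/2$. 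Center displacements are moved only by the center dynamics, and these grow at most at the subexponential rates controlled in Proposition \ref{choiceg}(3) and Proposition \ref{prop:gtog_0generic}(3). Consequently, the component of $\hat g^n(y')$ transverse to the center, inside the $\W^\mu$ directions, is governed by $g_0^n$ up to center factors. Projecting along the center onto the $\W^\mu_{f_0}$-plaque through $\hat g^n x'$ gives contraction at rate $e^{n\mu(g_0)/2}$, up to an extra factor $e^{\epsilon^2 n}$. This loss is absorbed by replacing $1/2$ with $1/3$.

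\emph{Step 3: returning to $X$.} By construction, $\W^\mu_\#=\W^{\mu,c}_\#\cap\W^s_f$, so $g^nx$ and $g^ny$ lie on the same stable leaf. On the model side, their preimages are within distance $C e^{n\mu(g_0)/3}d(x,y)^\theta$ modulo a center displacement. The stable-leaf condition forces this center displacement to be compatible with the $\W^\mu$ plaque. Applying $\phi$, which is $\theta$-H\"older, gives
\[
d(g^nx,g^ny)\le C e^{n\theta\mu(g_0)/3}d(x,y)^{\theta^2}.
\]
For $n>N$, the constant $C$ is absorbed into the rate, which produces the stated exponent $\theta\mu(g_0)/4$.

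The main obstacle is Step 2. The difficulty is justifying that the center discrepancy between $\hat g^n$ and $g_0^n$ does not leak into the $\W^\mu$ directions when the $G^c$ and $V_\mu$ subgroups do not jointly integrate. My first approach would be a shadowing-type argument along $\W^s_f$: use uniqueness of the intersection $\W^{\mu,c}_\#\cap\W^s_f$ to show that the $f$-stable leaf through $g^n x$ meets the center leaf of $g^n y$ in exactly one nearby point, and then control the error inductively, one iterate at a time, with $\epsilon_0$ small compared to $|\mu(g_0)|$.
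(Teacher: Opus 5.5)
Your overall architecture matches the paper's: pass to the model via $\phi$, contract there, use transversality of $\W^\mu$ to the center to turn ``distance to the center leaf'' into leafwise distance, and pay the H\"older exponent twice. Your Step 3 is essentially the paper's estimate $d(g^nx,g^ny)\le 2d(g^nx,y'_n)$.

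The gap is Step 2, which is the actual content of the lemma and which you leave unproved. The route you sketch does not close, for three reasons.
\begin{itemize}
\item Controlling $\hat g^n$ by $g_0^n$ ``up to center factors'' fails. After $n$ steps the accumulated center discrepancy is only bounded by something of order $n\epsilon_0\,d_{G^c}(g_0,\mathrm{id})$, which is not small. Moreover, center elements act on the transverse component by conjugation; this is exactly the ``leaking'' you worry about.
\item Propositions \ref{choiceg}(3) and \ref{prop:gtog_0generic}(3) are not hypotheses of the lemma. In any case they only control distortion inside center leaves, not the transverse $\W^\mu$-component.
\item An inductive shadowing argument carried out in $X$ along $\W^s_f$ would compound H\"older losses at every iterate, since transversally you only control $g$ through the H\"older map $\phi$.
\end{itemize}

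The missing idea is to work entirely in the model and never compare with $g_0^n$.
\begin{itemize}
\item Since $\hat g$ fixes every center leaf of $f_0$, its lift has the form $\tilde g(z)=b(z)z$ with $b(z)\in G^c$. Projective $\epsilon_0/2$-closeness says precisely that every one-step displacement $b(z)$ lies within relative distance $\epsilon_0/2$ of $g_0$.
\item Next, $\mathfrak g^{\mu,c}=\mathfrak g^c\oplus\mathfrak n$, where $\mathfrak n$, the $f_0$-stable part of $\mathfrak g^{\mu,c}$, is spanned by brackets containing at least one factor from $V_\mu$. Thus $\mathfrak n$ is an ideal of $\mathfrak g^{\mu,c}$, $\mathrm{Ad}(G^c)$ preserves it, and $\W^\mu_{f_0}(z)=\exp(\mathfrak n)z$.
\item Let $z'=ux'$, with $u\in\exp(\mathfrak n)$, be the point of $\W^\mu_{f_0}(x')$ on the center leaf of $y'$. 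That center leaf $G^cux'$ is $\hat g$-invariant. Also $\hat g^n(x')=b_nx'$ with $b_n=b(\tilde g^{n-1}x')\cdots b(x')$.
\item Hence the point $b_nux'=(b_nub_n^{-1})\,b_nx'$ lies both on $\W^\mu_{f_0}(\hat g^nx')$ and on the center leaf of $\hat g^n(y')$. Its displacement from $\hat g^nx'$ is exactly $b_nub_n^{-1}$.
\item By submultiplicativity, $\|\mathrm{Ad}(b_n)|_{\mathfrak n}\|\le\prod_{j<n}\|\mathrm{Ad}(b(\tilde g^jx'))|_{\mathfrak n}\|\le e^{n\mu(g_0)/2}$. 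The last inequality is the one-step estimate. For $g_0$ central in $G^c$ (the case in which the lemma is used), $\mathrm{Ad}(g_0)$ acts on $\mathfrak n$ with exponents $k\mu(g_0)$, $k\ge 1$, and the cone condition absorbs the $\epsilon_0/2$ perturbation.
\end{itemize}
This is the paper's ``one-step estimate applied successively along the orbit.'' There is no leaking, because $\mathrm{Ad}(G^c)$ preserves $\mathfrak n$, and there is no $e^{\epsilon^2 n}$ loss.

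Separately, your device in Step 1 of upgrading $c$ to $1/2$ ``by passing to a power of $g_0$'' is invalid. Powers do not change the ratio of the contraction exponent to $\mu(g_0)$, so the factor $1/2$ has to come from the structure just described.
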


\begin{proof}[Proof of Lemma \ref{gcontr}]

    We restrict our discussion to the case $\mu(g_0) < 0$, where $g_0$ uniformly contracts $\W^\mu_{f_0}$. The case $\mu(g_0) > 0$ follows by considering $g^{-1}$.

By the definition of projective closeness, for every \(z\in X\), the
center displacement
\[
\tilde g(z)z^{-1}\in G^c
\]
has Cartan projection lying in the prescribed \(\epsilon_0/2\)-cone about
\(g_0\). Hence, for each \(j\ge0\), the one-step displacement
\[
\tilde g(\tilde g^j z)(\tilde g^j z)^{-1}
\]
satisfies the same cone condition. Applying the corresponding one-step
estimate successively along the orbit \(z,\tilde g z,\ldots,\tilde g^{n-1}z\),
we obtain for any $w \in \W^\mu_{f_0}(z)$,
\[
d(\tilde g^n z,\tilde g^n w)
   \le e^{n\mu(g_0)/2}d(z,w),
\qquad w\in \W^\mu_{f_0}(z).
\]

    Furthermore, since the leaves $\W^c_{f_0}$ and $\W^\mu_{f_0}$ are cosets of subgroups of $G$ with orthogonal Lie algebras, there exists a constant $\epsilon > 0$ such that for any $z \in X$ and $w \in \W^\mu_{f_0}(z)$ with $d(z,w) < \epsilon$, we have:
    \[
    d(z, \tilde{\W}^c_{f_0}(w)) \le 2d(z,w).
    \]
    Since $f$ is a $C^1$-perturbation of $f_0$ and $\W^\mu_\#$ is a subfoliation of $\W^s_f$ or $\W^u_f$, a similar property holds for $f$: for any $z \in X$ and $w \in \W^\mu_\#(z)$ with $d(z,w) < \epsilon$, we have $d(z, \tilde{\W}^c_f(w)) \le 2d(z,w)$ in the cover $G$.

    Therefore, using the Hölder continuity of the leaf conjugacy $\phi$, for every $x \in X$ and every $y \in \W^\mu_\#$ with $d(x,y) \le \epsilon$, define:
    \[
    y' = \phi\big(\W^\mu_{f_0}(\phi^{-1}(x))\big) \cap \W^c_f(y), \quad y'_n = \phi(\W^\mu_{f_0}(\phi^{-1}(g^n(x))) \cap \W^c_f(g^n(y)).
    \]
    Then we have:
    \[
    \begin{aligned}
    d(g^n(x), g^n(y)) &\le 2 d(g^n(x), y'_n) \\
    &\le 2C d(\hat{g}^n(\phi^{-1}(x)), \phi^{-1}(y'_n))^\theta \\
    &\le 2C \left(e^{n\mu(g_0)/2} d(\phi^{-1}(x), \phi^{-1}(y'))\right)^\theta \\
    &\le 4C e^{n \theta \mu(g_0)/2} d(\phi^{-1}(x), \phi^{-1}(y))^\theta \\
    &\le 4C^2 e^{n \theta \mu(g_0)/2} d(x,y)^{\theta^2}.
    \end{aligned}
    \]
    By choosing $N$ sufficiently large, we obtain the desired result.
\end{proof}

\subsection{Proof of Proposition \ref{topph}}\label{sec:pftopph}

We now prove Proposition \ref{topph}, which establishes the existence
of a topological perturbation of a higher‑rank action inside the
centralizer.

In the case of Theorem \ref{cenrigtwgen}, the proof is exactly the same as the proof of Proposition 6.1 of \cite{Wang}. Indeed, Proposition \ref{prop:gtog_0generic} establishes small exponential growth along the center, and Lemma \ref{gcontr} shows exponential contraction and expansion in the topological stable and unstable foliations that is comparable to the growth of $\langle f_0,g_0\rangle$. The subtlety here is that, a priori, it is possible that one of the Weyl chambers of $\langle f_0,g_0\rangle$ has cone angle smaller than $\epsilon^2$, and the last property in Definition \ref{topper} will not hold. We avoid this situation using the same proof as in Section 6.4 of \cite{Wang}. When $\bar\epsilon$ is sufficiently small, we may pick $\alpha_0$ to be a small perturbation of $\langle f_0,g_0\rangle$ that collapses the small Weyl chambers, so that the Weyl chamber cones of $\alpha_0$ all have non-zero angles much larger than $\bar{\epsilon}$. 

We now prove Proposition \ref{topph} in the case of Theorem \ref{cenrignongen:twisted}.
\begin{proof}[Proof of Proposition \ref{topph}]
In the case of Theorem \ref{cenrignongen:twisted}, Proposition
\ref{choiceg} yields a subgroup of $Z(\CZ_0(f_0))$ that contains a
genuinely higher‑rank action $\alpha_0$ (which includes $f_0$). Let
$\alpha$ be the corresponding action in
$\CZ_0(f)$, from Proposition
\ref{choiceg}. We pick $\bar\epsilon$ sufficiently small so there are elements not close to the Weyl chamber walls in each cone of $\alpha_0$. By Proposition \ref{choiceg}, any element of
$\alpha=\langle f,g\rangle$ within a bounded set is projectively $\bar\epsilon$-close to the
corresponding element in $\alpha_0=\langle f_0,g_0\rangle$. 

Let $\{\mathcal W^i_{\alpha_0}\}$ denote the coarse Lyapunov foliations of
$\alpha_0$. Define $\mathcal W^i_\alpha$ to be the pull-back of the joint integration of $\W^i_{\alpha_0}$ with $\W^c_{f_0}$, intersected with $\W^s_f$ or $\W^u_f$, similar to how we defined $\mathcal W^\mu_\#$ in Section
\ref{cantopfol}. By construction, they are also the joint integration of the
topological foliations $\mathcal W^\mu_\#$, if $\mathcal W^i_{\alpha_0}$ is the joint integration of the
corresponding algebraic $\mathcal W^\mu_{f_0}$. The center foliations $\W^c_\alpha$ and $\W^c_{\alpha_0}$ are uniformly $C^0$-close because they are just center foliations of $f$ and $f_0$. By construction, the holonomies of  $\mathcal W^i_\alpha$ are also $C^0$–close to $\mathcal W^i_{\alpha_0}$, since they are just part of stable or unstable holonomies of $\W^*_{f_0}$ and $\W^*_{f}$ for $*=s$ or $u$.
Lemma \ref{gcontr} shows that elements of $\alpha$ exhibit the
expected exponential contraction or expansion along $\mathcal W^i_\alpha$
whenever the corresponding algebraic element in $\alpha_0$ does so.

Moreover, Proposition \ref{choiceg} implies that elements of $\alpha$
have at most linear growth along the center foliation
$\mathcal W^c_f=\mathcal W^c_\alpha$. Combining these facts shows that
$\alpha$ is an $\bar\epsilon$–topological perturbation of the genuinely
higher‑rank action $\alpha_0$ in the sense of Definition \ref{topper}.

Finally, accessibility of the family $\{\mathcal W^i_\#\}$ holds because
by construction $\mathcal W^s_f$ is the joint integration of $\{\mathcal W^i_\#:\chi_i(f)<0\}$ and $\mathcal W^u_f$
is the joint integration of $\{\mathcal W^i_\#:\chi_i(f)>0\}$; since $f$ itself is accessible, these
topological Lyapunov foliations together are accessible as well. This completes
the proof.
\end{proof}

\section{Rigidity}\label{sec:cocrig}

In this section, we use a geometric argument to prove Theorems
\ref{cenrigtwgen} and \ref{cenrignongen:twisted}. We select an appropriate
element $g\in\CZ_0(f)$ and use the partially hyperbolic properties of $g$
established earlier to show that the action generated by $f$ and $g$ is
smoothly conjugate to a restriction of the twisted Weyl chamber flow.

The idea, following \cite{DK}, \cite{Vinhage}, and \cite{VinWang}, is to
show that a ``large part'' of the group of closed loops of Lyapunov
foliations of $f$ are generated by stable cycles of the higher‑rank
action $\alpha=\langle f,g\rangle:\Z^2\to\mathrm{Diff}(X)$. This implies
that the periodic cycle functional of any cocycle of $\alpha$ is
trivial, which in turn shows that the cocycle given by center
translations of $\alpha$ is cohomologous to a constant. This yields a
conjugacy between $\alpha$ and a restriction of the (twisted) Weyl
chamber flow. Then the classical normal‑form argument  from \cite{KS} upgrades the conjugacy
to a smooth conjugacy.

\subsection{Cocycle formulation}

Let $\alpha_1:A\to\mathrm{Diff}(X)$ be a group action and let $S$ be a
(group) target. Recall that a cocycle $\beta:A\times X\to S$ satisfies
\[
\beta(ab,x)=\beta(a,\alpha_1(b)(x))\,\beta(b,x),\qquad\forall a,b\in A,\ x\in X.
\]
A cocycle is cohomologous to a constant if there exists a homomorphism
$s:A\to S$ and a transfer map $H:X\to S$ with
\[
\beta(a,x)=H(\alpha_1(a)(x))\,s(a)\,H(x)^{-1},\qquad\forall a\in A,\ x\in X.
\]

We consider the action
\[
\alpha=\langle f,g\rangle:\Z^2\to\mathrm{Diff}(X).
\]

\begin{itemize}
    \item If $f_0$ is a generic element of the (twisted) Weyl chamber flow, choose $g$ to be projectively $\epsilon$-close to $g_0$ that is not projectively $0.1$-close to $f_0$ in any simple component of $\mathfrak{g}(G_0)$.

    \item For the non-generic case, choose
  $g\in\CZ_0(f)$ in the center of $\CZ_0(f)$ with $g$ being $C^0$–close to
  $g_0\in Z(\CZ_0(f_0))$, where \(g_0\) is not in the \(0.1\)-cone of \(f_0\) in any simple component, as given in Proposition \ref{choiceg}.
\end{itemize}

Let $\alpha_0=\langle f_0,g_0\rangle:\Z^2\to\mathrm{Diff}(X)$. By Proposition \ref{topph} we have shown that $\alpha$ is a
topological perturbation of $\alpha_0$ on a generating set $A_0$. That is, for $a\in A_0$ the
map $\alpha(a)$ exhibits the same type of contraction/expansion as
$\alpha_0(a)$ along the corresponding topological stable, unstable and
center foliations. Moreover, our choices ensure that $\alpha_0$ is a
genuinely higher‑rank restriction of the (twisted) Weyl chamber flow.

Define the conjugated action $\hat\alpha$ by the leaf conjugacy:
\begin{equation}\label{hatalpha}
\hat\alpha(a)=\phi^{-1}\,\alpha(a)\,\phi,
\end{equation}
where $\phi$ is the leaf conjugacy from $(f_0,\mathcal W^c_{f_0})$ to
$(f,\mathcal W^c_{f})$.

Define $\hat\beta:\Z^2\times X\to G^c$ by
\begin{equation}\label{cocbeta}
\hat\beta(a,x)=\widetilde{\hat\alpha(a)}(\tilde x)\,\tilde x^{-1},
\end{equation}
where $\widetilde{\hat\alpha(a)}$ denotes the lift of the homeomorphism
$\hat\alpha(a)$ to $G$ that fixes the center leaves of $f_0$, and
$\tilde x$ is a chosen lift of $x$. By construction $\hat\beta$ is a
Hölder cocycle over $\hat\alpha$.

Using contraction along stable and unstable foliations (see Lemma
\ref{gcontr}) and the algebraic structure
of the Lyapunov cycle group, we obtain the following cocycle rigidity
result.

\begin{thm}\label{cocrig}
Let $\hat\alpha$ be the $\Z^2$–action defined in
\eqref{hatalpha}. Then any Hölder cocycle
$\beta:\Z^2\times X\to G^c$ over $\hat\alpha$ which is $C^0$–close
to a constant on a generating set is cohomologous to a constant via a
H\"older transfer map $H$.
\end{thm}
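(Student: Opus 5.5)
We follow the geometric method of \cite{DK}, \cite{Vinhage} and \cite{VinWang}, and show that the periodic cycle functional of $\beta$ vanishes on every closed $su$-cycle. First we reduce to the case where $\beta$ is close to the constant cocycle $s(a)=$ (the translation part of $\alpha_0(a)$) on the generating set $A_0$. Then for each coarse topological Lyapunov foliation $\mathcal W^i_\alpha$ (transported by $\phi$ to the $\hat\alpha$-side) we define H\"older holonomy maps $H^i_{x,y}$ for $y\in\hat{\mathcal W}^i(x)$:
\[
H^i_{x,y}=\lim_{n\to\infty}\beta(na,y)^{-1}\,s(na)\,\beta(na,x)^{-1}\cdots
\]
suitably normalized. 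More precisely, $H^i_{x,y}=\lim_n \beta(na,y)^{-1}\beta(na,x)$, twisted by conjugation by $s(na)$, where $a$ is an element contracting $\mathcal W^i$ and lying outside the $\bar\epsilon$-cone of $\ker\chi_i$. Convergence follows from three ingredients: the exponential contraction of Lemma \ref{gcontr} (from Proposition \ref{topph}), the H\"older regularity of $\beta$, and the fact that conjugation by $s(na)$ on $G^c$ grows at most subexponentially. The last point uses the small center growth in Definition \ref{topper}, or the fact that $s$ is central in the non-generic case. The contraction rate $e^{r\chi_i(a)n}$ beats the growth rate $e^{\epsilon^2\|a\||n|}$, which makes the series geometrically convergent. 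Next we show that $H^i$ is independent of the choice of contracting $a$. Two admissible elements can be joined within the open half-space $\{\chi_i<0\}$, and the cocycle identity together with commutativity gives the same limit. Consequently the holonomies are $\hat\alpha$-equivariant: $H^i_{\hat\alpha(b)x,\hat\alpha(b)y}=\beta(b,y)H^i_{x,y}\beta(b,x)^{-1}$, up to the constant $s(b)$.

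Second, we define the periodic cycle functional $P(\gamma)$ for a closed path $\gamma$ made of legs in the $\hat{\mathcal W}^i$ as the ordered product of the $H^i$ along the legs. If $P\equiv e$, we fix a base point $x_0$ and set $H(x)=P(\gamma_{x_0\to x})$ along any such path; this is well defined. By Proposition \ref{faccess} together with the last paragraph of the proof of Proposition \ref{topph}, the paths have a bounded number of legs and bounded length, so $H$ is H\"older. Equivariance of the holonomies then gives $\beta(a,x)=H(\hat\alpha(a)x)s(a)H(x)^{-1}$.

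The main obstacle is proving $P\equiv e$. We plan to show that the cycle group is generated, modulo cycles on which $P$ is trivial, by stable cycles. A stable cycle is a cycle contained in the leaf of the stable foliation of a single element $a$ whose stable set contains all the $\chi_i$ involved. On a stable cycle, $P$ is computed as $\lim\beta(na,\cdot)$ products and therefore telescopes to the identity. Producing enough such elements is exactly what the last bullet of Definition \ref{topper} provides: for each non-proportional pair $\chi_i,\chi_j$ there is a common contracting $a$.

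Since $\alpha$ is only a topological perturbation, we cannot use algebraic commutator relations directly. Instead we argue via $\phi$ and the $C^0$-closeness of holonomies (third bullet of Definition \ref{topper}), following \cite{VinWang}. Every short cycle of $\alpha$ is $C^0$-close to a cycle of $\alpha_0$. The algebraic cycle group of $\alpha_0$ is described by the Steinberg-type presentation of the (universal central extension of) $G$, as in Section \ref{cenext}: it is generated by commutator (stable) cycles together with the central elements of $\widetilde G$ over $G$, and the latter correspond to $\pi_1$ and the lattice. The hypothesis in Theorem \ref{cenrignongen:twisted} (no symplectic component or no zero weights) is what allows us to handle the $\Omega_\rho$-part. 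These remaining central cycles produce a homomorphism from a discrete group into $G^c$ that depends continuously on the perturbation and is trivial at $\alpha_0$. A small continuous homomorphism from a finitely generated group of this type must vanish. This step is the delicate one; we would first try to push through the argument of \cite{VinWang} verbatim, using Lemma \ref{gcontr} in place of smooth hyperbolicity.
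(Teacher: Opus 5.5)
Your proposal follows essentially the same route as the paper: define the Lyapunov cycle functional, use that its triviality is equivalent to cohomology to a constant (Proposition \ref{consttrivialfunctional}) and that stable cycles lie in its kernel, then compare with the lifted algebraic action on $\tilde G/\tilde\Gamma$ via \cite{VinWang}, so that $P_\beta$ factors through $\tilde\Gamma$ and is killed by smallness (Lemmas \ref{pbetasmall} and \ref{latticefin}). The places to sharpen are exactly the cited inputs. You need three precise facts: the canonical maps $P,Q$ give a bijection $C^0(\tilde\alpha)\leftrightarrow C^0(\tilde\alpha_0)$ preserving stable cycles, which is more than $C^0$-closeness of cycles; $C^0(\tilde\alpha_0)/S(\tilde\alpha_0)$ is minimally almost periodic, rather than literally generated by stable cycles; and small homomorphisms vanish because the discrete group is the higher-rank lattice $\tilde\Gamma$, since this fails for general finitely generated groups.
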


We now deduce conjugacy to the homogeneous model by applying Theorem
\ref{cocrig} to $\hat\beta$ defined in \eqref{cocbeta}. The argument
parallels the proof of Theorem 1.1 in \cite{DK}.

\begin{prop}\label{prop:fholconj}
Under the assumptions of Theorems \ref{cenrigtwgen} and 
\ref{cenrignongen:twisted}, the diffeomorphism
$f$ is Hölder conjugate to an element of the (twisted) Weyl chamber flow, and H\"older conjugacy identifies $\CZ_0(f)$ with a subgroup of $\CZ_0(f_0)$.
\end{prop}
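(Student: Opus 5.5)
We apply Theorem \ref{cocrig} to the cocycle $\hat\beta$ from \eqref{cocbeta}. On the generators $a\in\{(1,0),(0,1)\}$, Proposition \ref{gtransl}, Proposition \ref{prop:gtog_0generic} (generic case) and Proposition \ref{choiceg} (non-generic case) say that $\hat\alpha(a)$ is projectively $\epsilon$-close to the translations $f_0$ and $g_0$. Thus $\hat\beta(a,\cdot)$ is $C^0$-close to the constants $f_0,g_0\in G^c$ on a generating set. Theorem \ref{cocrig} then gives a Hölder map $H:X\to G^c$ and a homomorphism $s:\Z^2\to G^c$ with
\[
\hat\beta(a,x)=H(\hat\alpha(a)x)\,s(a)\,H(x)^{-1}.
\]
Since $H$ is $C^0$-close to a constant, we may normalize it to be $C^0$-close to $\mathrm{id}$. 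Then $s(1,0)$ and $s(0,1)$ are close to $f_0$ and $g_0$ respectively, and they commute.

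Next we define $\Phi(x)=H(x)^{-1}\cdot x$, lifted to $G$ and projected to $X$. The cocycle identity gives
\[
\widetilde{\hat\alpha(a)}(\tilde x)=H(\hat\alpha(a)x)\,s(a)\,H(x)^{-1}\tilde x .
\]
Hence $\Phi(\hat\alpha(a)x)=s(a)\cdot\Phi(x)$, so $\Phi$ intertwines $\hat\alpha$ with the action of $\Z^2$ by left translations by $s(a)\in G^c$. The map $\Phi$ preserves each center leaf of $f_0$ and is $C^0$-close to the identity. We check that it is a homeomorphism:
\begin{enumerate}
\item On the base (leaf space), $\Phi$ induces the identity, so it suffices to have injectivity and surjectivity within each center leaf.
\item Within each center leaf, $\Phi$ is a Hölder map of a covering of the center leaf that is close to the identity.
\item We would argue injectivity by the standard Katok--Spatzier/Damjanović--Katok argument. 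If $\Phi(x)=\Phi(y)$, then $x,y$ lie on the same center leaf. Their $\hat\alpha$-orbits then stay at bounded center distance forever, while the translations $s(a)$ act isometrically along the center.
\item Combined with the expansivity transverse to the center from Lemma \ref{gcontr}, and with the fact that $\CZ_0(f)$ acts simply transitively on the lifted center leaves, this should force $x=y$.
\item Surjectivity follows from a degree argument, since $\Phi$ is homotopic to the identity.
\end{enumerate}
Then $h=\phi\circ\Phi^{-1}$ conjugates $f$ to left translation by $s(1,0)$.

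The main obstacle is to show that $s(1,0)$ is an element of the (twisted) Weyl chamber flow rather than merely some element of $G^c$ close to $f_0$. Suppose first that $s(1,0)$ is conjugate in $G^c$ into $D$. Then conjugating $\Phi$ by the corresponding left translation in $G^c$ puts $s(1,0)$ in $D$. To get this, we would use that $s(1,0)$ commutes with the image of $\CZ_0(f)$. Transporting $\CZ_0(f)$ by the conjugacy gives homeomorphisms commuting with left translation by $s(1,0)$ that fix center leaves and act trivially on $\mathrm{Out}(\Gamma)$. By Proposition \ref{prop:affinecent} they are left translations, using that translation by $s(1,0)$ is accessible as a $C^0$-small perturbation of $f_0$ within $G^c$. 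This identifies $\CZ_0(f)$ with a subgroup of the centralizer of $s(1,0)$ in $G^c$, which is the second assertion. Since this subgroup also contains $s(0,1)$, close to $g_0$, the centralizer of $s(1,0)$ contains the closure of a genuinely higher-rank abelian subgroup close to $\langle f_0,g_0\rangle$. A near-identity element of $G^c$ commuting with $s(\Z^2)$, which is close to regular in the Cartan of $\langle f_0,g_0\rangle$, must itself be semisimple and split. Hence $s(1,0)$ lies in a conjugate of $D$, and after conjugation $f$ is Hölder conjugate to an element of the twisted Weyl chamber flow.
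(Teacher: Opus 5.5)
You have the paper's overall structure: cocycle rigidity, $\Phi(x)=H(x)^{-1}x$, surjectivity by a degree argument, and Proposition~\ref{prop:affinecent} for the centralizer. However, two steps do not go through as written.

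\textbf{Injectivity of $\Phi$.} This is the real content of the homeomorphism claim, and it does not follow from the facts you list. Since $\Phi$ fixes center leaves, transverse expansivity (Lemma~\ref{gcontr}) tells you nothing beyond ``$x,y$ lie on the same center leaf''. Along the center there is no mechanism that separates points. Concretely, in the non-generic case every $\alpha(a)$ commutes with $\CZ_0(f)$. So if $y=\hat g_1(x)$ with $g_1\neq\mathrm{id}$ in $\CZ_0(f)$, then $\hat\alpha(a)y=\hat g_1(\hat\alpha(a)x)$ stays at bounded, non-zero center distance from $\hat\alpha(a)x$ for every $a$. That is fully consistent with everything you wrote, so no contradiction arises. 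Moreover, simple transitivity of $\CZ_0(f)$ on lifted center leaves is not available for Theorem~\ref{cenrigtwgen}, where $\dim\CZ(f)$ may be smaller than $\dim G^c=k$.

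The missing ingredient is the $su$-holonomy group. Because $\Phi$ is a semiconjugacy at bounded distance from the identity in the cover and fixes center leaves, it carries stable/unstable leaves of $\hat\alpha$ to those of the translation action. Hence it intertwines the $su$-holonomies of $\hat\alpha$ with the affine ones. Take a holonomy $h_\gamma$ with $h_\gamma(x)=y$ and $\Phi(x)=\Phi(y)$. The corresponding affine holonomy fixes $\Phi(x)$, so it is trivial, and therefore $\Phi\circ h_\gamma=\Phi$. The iterates $h_\gamma^n(x)$ then remain in the bounded fiber $\Phi^{-1}(\Phi(x))$. This contradicts Proposition~\ref{hologroup}: the holonomy group of $\hat\alpha$ acts freely and transitively on lifted center leaves. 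That proposition is exactly where the cycle comparison of Proposition~\ref{bijalpha} enters the proof.

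\textbf{Identifying $s(1,0)$ as a Weyl chamber flow element.} Your claim that an element of $G^c$ close to $f_0$ and commuting with $s(\Z^2)$ must be semisimple and split is false. Take $s(1,0)=f_0u$, with $u$ a small non-trivial unipotent element of $G^c$ (for instance in $\exp(\mathfrak g_r)$ with $r(f_0)=0$, or a translation in $\exp(\mathfrak e_0)$; one exists because $f_0$ is non-generic). Take $s(0,1)=g_0\in Z(\CZ_0(f_0))$. These commute and meet all your closeness requirements, yet $f_0u$ is not conjugate into $D$.

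What you never use at this point is the hypothesis $\mathcal Z(f)\doteq\mathcal Z(f_0)$ of Theorem~\ref{cenrignongen:twisted}. The paper argues as follows. Proposition~\ref{prop:affinecent} embeds the transported $\CZ_0(f)$ injectively into the centralizer of $f_1=s(1,0)$ in $G^c$. Since $\CZ_0(f)$ has the same dimension as $G^c$, its image is open, so this centralizer is all of the connected group $G^c$. Thus $f_1$ is central in $G^c$, i.e.\ $\CZ(f_1)=\CZ(f_0)$, and this is what forces $f_1$ into the Weyl chamber flow. In the generic case $G^c=D\times\{0\}$, so nothing needs to be proved there.
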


\begin{proof}
Take $\hat\beta$ as in \eqref{cocbeta}. We first verify that for $f$
satisfying the hypotheses of the stated theorems the cocycle $\hat\beta$
is well‑defined and $C^0$–close to a constant on a generating set.

If $f_0$ is as in Theorems \ref{cenrigtwgen} or \ref{cenrignongen:twisted} and $f$ is a sufficiently
small $C^1$–perturbation satisfying the hypotheses, then Proposition
\ref{topph} implies $\alpha$ is a topological perturbation of $\alpha_0$
and hence $\hat\beta$ is well‑defined and $C^0$–close to a constant.

Thus Theorem \ref{cocrig} applies: $\hat\beta$ is cohomologous to a
constant via a Hölder transfer map $H:X\to G^c$.

Define $h_1:X\to X$ by $h_1(x)=H(x)^{-1}\cdot x$. Then for any
$\hat g=\hat\alpha(a)$ with $a\in\Z^2$ we have
\begin{equation}\label{chomg}
h_1\circ\hat g(x)=s(a)\,\alpha_0(a)\cdot h_1(x),\qquad\forall x\in X,
\end{equation}
where $s:\Z^2\to G^c$ is the constant homomorphism produced by
the cohomology.

We claim $h_1$ is a homeomorphism. Lifting $h_1$ to the cover $G$, since $h_1$ is a semiconjugacy between $\tilde g$ and $s(a)\alpha_0(a)$, it sends stable/unstable foliations of $\hat \alpha$ to those of $s\circ \alpha_0$. By construction, we also know  $h_1$ fixes the center leaves of $\W^c_{\alpha_0}$. Therefore, if $h_1(x)=h_1(y)$ then for any
stable holonomy $h^s$ between center foliations, we
have $h_1(h^s(x))=h_1(h^s(y))$. The same is true for unstable holonomies. Hence the orbit of $x$ under the $su$-holonomy
group in $\W^c_{\hat \alpha}$ is contained in $h_1^{-1}(\{h_1(x)\})$, which is bounded. If $x\neq y$, pick an $su$-path from $x$ to $y$, then the orbit of the holonomy $\{h_\gamma^n(x)\}_n\subset h_1^{-1}(\{h_1(x)\})$ has a converging subsequence. But by
Proposition \ref{hologroup}, the holonomy group acts freely in a generic
center leaf of $G$, so $\{h_\gamma^n(x)\}_n$ cannot be bounded. This gives a contradiction. Thus, we must have $x=y$ and $h_1$ is injective; since
it is continuous in the closed manifold $X$ and homotopic to the identity, $h_1$ is a
homeomorphism.

It follows that $\hat f$ is Hölder conjugate to an
element in $G^c$. Conjugating back via $\phi$ shows
that $f$ is Hölder conjugate to a left translation of an element of $G^c$. If $f_0$ is a generic element of the twisted Weyl chamber flow, then $G^c=D\times \{0\}$ and this shows that $f$ is conjugate to an element of the (twisted) Weyl chamber flow.

If $f_0$ is a non-generic element of the (twisted) Weyl chamber flow, this shows that $f$ is H\"older conjugate to a partially hyperbolic, accessible left translation $f_1$ on $X$. The conjugacy takes any element $g\in \CZ_0(f)$ to a homeomorphism that commutes with $f_1$. By Proposition \ref{prop:affinecent}, this shows that the conjugacy maps $\CZ_0(f)$ to a subgroup of $\CZ(f_1)\subset G^c$. On the other hand, under the assumption of Theorem \ref{cenrignongen:twisted}, we have $\CZ_0(f)\doteq G^c$ as a group. This shows that $\CZ(f_1)=\CZ(f_0)$, which forces $f_1$ to be an element of the (twisted) Weyl chamber flow. 

In both cases, this shows that the conjugacy identifies $\CZ_0(f)$ with a subgroup of $\CZ_0(f_0)$. In the setting of Theorem \ref{cenrignongen:twisted}, this actually shows that conjugacy identifies $\CZ_0(f)$ with $\CZ_0(f_0)$.
\end{proof}

Since $\CZ_0(f)$ is H\"older conjugate to a substantial part of $\CZ_0(f_0)$, we may show that generic elements of $\CZ_0(f)$ are
partially hyperbolic; hence the coarse Lyapunov foliations of $\alpha$
are $C^\infty$–smooth. The proof is very similar to the proof of Theorem 7.11 of \cite{Wang}, and we postpone the proof to Section \ref{alphaph}.

By standard normal‑form theory for algebraic
manifolds, one can then upgrade the H\"older conjugacy to a smooth conjugacy. The
smoothness of $h_1$ along the coarse Lyapunov foliations follows as in
Step 4 of Section 2.2.3 in \cite{KS}. Global smoothness follows from
Theorem 2.1 of \cite{KSpat}.

\subsection{Lyapunov cycles in general}

We begin with an analysis of the group of Lyapunov cycles for an action
$\alpha$ which preserves a family of topological Lyapunov foliations.

Let $\alpha$ be an action on a manifold $M$ preserving a family of
topological Lyapunov foliations $\{\mathcal W^i_\alpha\}$. A path in
$M$, each of whose legs is contained in one of these foliations is called a
\emph{Lyapunov path} of $\alpha$. We denote such a path by its sequence
of endpoints $[x_0,x_1,\dots,x_k]$, where $x_i\in\mathcal W^{j}_\alpha(x_{i-1})$
for some index $j$ and for all $1\le i\le k$. Here
$\mathcal W^j_\alpha:=\bigcap_{a:\chi_j(a)<0}\mathcal W^s_{\alpha(a)}$
is the Lyapunov foliation of $\alpha$ associated to the index $j$. Let
$P(\alpha)$ denote the set of Lyapunov paths for $\alpha$ of finite
length (finitely many legs).

A Lyapunov path is an $\alpha$–\emph{cycle} if it starts and ends at the
same point. Fix $x_0\in M$ and denote by $C_{x_0}(\alpha)$ the set of
$\alpha$–cycles based at $x_0$. The set $C_{x_0}(\alpha)$ is a group
under concatenation of paths; it carries the coarsest topology making
each endpoint map continuous (see Section 7.2 of \cite{VinWang}). If
the Lyapunov foliations are accessible, then the cycle groups based at
different points are conjugate, so one may omit the basepoint and write
$C(\alpha)$ for the Lyapunov cycle group. Denote by $C_0(\alpha)$ the
subgroup of $C(\alpha)$ consisting of cycles whose lifts to the
universal cover of $M$ remain closed.

An $\alpha$–cycle in $C(\alpha)$ is called \emph{stable} if there exists
$g\in\operatorname{Im}(\alpha)$ such that the entire path is contained
in the stable manifold $\mathcal W^s_g(x_0)$ (i.e.\ the joint integration
of those $\mathcal W^i_\alpha$ with $\chi_i(g)<0$). Let $S(\alpha)$ be
the closure (in the topology of $C(\alpha)$) of the normal subgroup of
$C(\alpha)$ generated by stable cycles. Note that stable cycles are
contractible: iterating by the contracting element $g$ shrinks the
cycle into an arbitrarily small ball, hence $S(\alpha)\subset C_0(\alpha)$.

\begin{defi}[Lyapunov cycle functional]
Given a H\"older cocycle $\beta$ of an action $\alpha$ with topological
Lyapunov foliations, the \emph{Lyapunov cycle functional}
$P_\beta:C_{x_0}(\alpha)\to G^c$ is defined by
\[
P_\beta\big([x_0,x_1,\dots,x_k=x_0]\big)
=\prod_{i=0}^{k-1} p_\beta(x_i,x_{i+1}),
\]
where for $y\in\mathcal W^i_\alpha(x)$ and any $a\in A$ with
$\chi_i(\alpha(a))<0$,
\[
p_\beta(x,y)=\lim_{n\to\infty}\beta(na,x)^{-1}\beta(na,y)
\]
(the limit exists by contraction along the foliation and the H\"older
property of $\beta$).
\end{defi}

The following proposition characterizes cocycles cohomologous to
constants in terms of the Lyapunov cycle functional.

\begin{prop}\label{consttrivialfunctional}
Let $\alpha:A\to\mathrm{Diff}(X)$ be an action with accessible
topological Lyapunov foliations. A H\"older cocycle
$\beta:A\times X\to G^c$ is cohomologous to a constant via a
H\"older transfer map $H:X\to G^c$ if and only if the
Lyapunov cycle functional $P_\beta:C(\alpha)\to G^c$ is trivial.
\end{prop}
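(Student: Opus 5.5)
We plan to prove both implications using the standard construction of a transfer map along Lyapunov paths. The forward direction is formal. Suppose $\beta(a,x)=H(\alpha(a)x)\,s(a)\,H(x)^{-1}$ with $H$ H\"older. Then for $y\in\mathcal W^i_\alpha(x)$ and $a$ with $\chi_i(a)<0$,
\[
\beta(na,x)^{-1}\beta(na,y)=H(x)\,H(\alpha(na)x)^{-1}H(\alpha(na)y)\,H(y)^{-1}.
\]
The distance $d(\alpha(na)x,\alpha(na)y)$ tends to $0$ by the topological contraction in Definition \ref{topper}. Since $H$ is uniformly continuous, the middle factor tends to the identity. Hence $p_\beta(x,y)=H(x)H(y)^{-1}$, and the product along any cycle telescopes to the identity. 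So $P_\beta$ is trivial.

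For the converse, assume $P_\beta$ is trivial. Fix a base point $x_0$ and define $H(x):=P_\beta(\gamma)$, where $\gamma$ is any Lyapunov path from $x_0$ to $x$; such a path exists by accessibility. We have to check several things in order.

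\begin{enumerate}
\item \emph{$p_\beta$ is well defined.} We show that $p_\beta(x,y)$ exists and does not depend on the choice of contracting $a$. Existence comes from the exponential (H\"older-type) contraction along $\mathcal W^i_\alpha$, as in Lemma \ref{gcontr}. Combined with H\"older continuity of $\beta$, the increments $\beta(a,\alpha(na)y)\beta(a,\alpha(na)x)^{-1}$ are summably close to the identity in $G^c$. When $G^c$ is non-abelian, this requires controlling conjugation by $\beta(na,x)$, which has at most $e^{\epsilon^2 n}$ growth by the center estimate in Definition \ref{topper}; this growth is beaten by the exponential contraction. Independence of $a$ follows from the cocycle identity applied to $a+b$ for commuting elements.
\item \emph{$H$ is well defined.} Triviality of $P_\beta$ on cycles makes $H$ independent of the path $\gamma$.
\item \emph{Equivariance.} From the cocycle identity we get $p_\beta(\alpha(b)x,\alpha(b)y)=\beta(b,x)\,p_\beta(x,y)\,\beta(b,y)^{-1}$. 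Therefore $\beta(b,x)^{-1}H(\alpha(b)x)\,H(x)^{-1}$ is constant along every Lyapunov leg. By accessibility it equals a constant $s(b)$, and the cocycle property forces $s$ to be a homomorphism. This gives the cohomology equation after inverting appropriately.
\item \emph{H\"older regularity.} This is the step we expect to be the main obstacle. Continuity of $H$ needs Lyapunov paths with a uniformly bounded number of legs whose endpoints depend continuously on $x$; we would get these from the bounded $su$-paths of Proposition \ref{faccess}, subdivided into topological Lyapunov legs. On each leg, the estimate $d(p_\beta(x,y),\mathrm{id})\le C\,d(x,y)^{\theta'}$ follows from the geometric-series bound in step 1. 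We would then use the local product structure of the transverse topological foliations to propagate this bound to a H\"older estimate for $H$ across nearby points, following Section 7 of \cite{VinWang}.
\end{enumerate}
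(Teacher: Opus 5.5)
The paper states this proposition without proof, treating it as the standard characterization of coboundaries by the cycle functional (cf.\ \cite{DK}, \cite{VinWang}). Your overall scheme (telescoping for one direction, integrating $p_\beta$ along Lyapunov paths for the other) is that standard argument, but there is a genuine gap in step 4.

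Proposition \ref{faccess} gives paths of bounded length between \emph{arbitrary} pairs of points. It does not give paths between nearby points $x,x'$ whose length tends to $0$ with $d(x,x')$, nor any continuous dependence on the endpoints. So it cannot yield even continuity of $H$. Local product structure of the topological Lyapunov foliations does not help either: these foliations omit the center direction, which is reached only through brackets of legs.

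What you need is a quantitative local accessibility lemma: there exist $K$, $C$ and $\kappa>0$ such that any $x,x'$ with $d(x,x')$ small are joined by a Lyapunov path with at most $K$ legs and total length at most $C\,d(x,x')^{\kappa}$. With it, your leg estimate $d(p_\beta(x,y),\mathrm{id})\le C\,d(x,y)^{\theta'}$ gives H\"older continuity of $H$ at once. This quantitative form is what ``accessible'' must be taken to mean for your argument to go through, as in the Katok--Kononenko theorem. For $\alpha_0$ it follows from bracket generation of $\mathfrak g$ by the $V_\mu$ with $\mu(f_0)\neq0$. For the perturbed foliations, however, it must be proved at all small scales, and the fixed $C^0$-closeness in Definition \ref{topper} does not provide that by itself.

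Two computations also need correcting. In the forward direction the correct identity is
\[
\beta(na,x)^{-1}\beta(na,y)=H(x)\,s(a)^{-n}\bigl[H(\alpha(na)x)^{-1}H(\alpha(na)y)\bigr]s(a)^{n}\,H(y)^{-1}.
\]
For non-abelian $G^c$ (the non-generic case) the bracket is conjugated by $s(a)^n$, so uniform continuity of $H$ is not enough. You need the H\"older property of $H$, which makes the bracket $O(e^{-cn})$, together with $\|\operatorname{Ad}(s(a)^n)|_{\mathfrak g^c}\|\le e^{\lambda n}$ for some $\lambda<c$. Since $H$ is bounded, this is the same growth bound on $\operatorname{Ad}(\beta(na,x))$ that step 1 requires.

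That growth bound does not come from the center estimate of Definition \ref{topper}, which concerns displacement of points along $\W^c$ rather than cocycle values. It comes from $\beta$ being close to a constant cocycle with values in $Z(G^c)$ (as $\hat\beta$ is close to $\alpha_0(a)\in Z(G^c)$), which gives growth $e^{C\delta n}$. Without some such hypothesis, $p_\beta$ is not even guaranteed to exist.

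In step 3, use your convention $H(y)=H(x)\,p_\beta(x,y)$ and your (correct) equivariance formula. Then the quantity constant along legs is $H(\alpha(b)x)\,\beta(b,x)\,H(x)^{-1}$, not $\beta(b,x)^{-1}H(\alpha(b)x)H(x)^{-1}$. The latter fails to be leafwise constant even for abelian $G^c$: it changes by the factor $\beta(b,x)^{2}\beta(b,y)^{-2}$. The correct invariant yields $\beta(b,x)=H(\alpha(b)x)^{-1}s(b)\,H(x)$, i.e.\ the transfer map is $H^{-1}$.
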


Furthermore, since stable cycles are always contractible, we have the following.
\begin{lem}\label{lem:stableinkernel}
For any H\"older cocycle $\beta$ of a group action $\alpha$ we have
$S(\alpha)\subset\ker P_\beta$.
\end{lem}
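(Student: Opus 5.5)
The plan is to show that every stable cycle is killed by $P_\beta$ by realizing it as the limit of cycles whose functional values tend to the identity. Fix a stable cycle $\gamma=[x_0,x_1,\dots,x_k=x_0]\in C_{x_0}(\alpha)$, and pick $a\in A$ such that the entire path lies in $\mathcal W^s_{\alpha(a)}(x_0)$. Thus every leg lies in some $\mathcal W^i_\alpha$ with $\chi_i(a)<0$. In the definition of $p_\beta(x_i,x_{i+1})$ we are free to use this same $a$ for every leg. Then
\[
P_\beta(\gamma)=\prod_{i=0}^{k-1}\lim_{n\to\infty}\beta(na,x_i)^{-1}\beta(na,x_{i+1})
=\lim_{n\to\infty}\prod_{i=0}^{k-1}\beta(na,x_i)^{-1}\beta(na,x_{i+1}).
\]
The interchange of limit and finite product is legitimate because each factor converges. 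The product on the right telescopes to $\beta(na,x_0)^{-1}\beta(na,x_k)=\mathrm{id}$, since $x_k=x_0$. Hence $P_\beta(\gamma)=\mathrm{id}$.

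The one point needing care is that $p_\beta$ is independent of the contracting element used. For $b$ another element with $\chi_i(b)<0$, the cocycle identity gives
\[
\beta(na+mb,x)=\beta(mb,\alpha(na)x)\,\beta(na,x).
\]
By the topological contraction property (Lemma \ref{gcontr}, in the form of Definition \ref{topper}), $d(\alpha(na)x,\alpha(na)y)\to 0$ exponentially. Combining this with Hölder continuity of $\beta$, the correction terms $\beta(mb,\alpha(na)x)^{-1}\beta(mb,\alpha(na)y)$ tend to the identity. This shows the limits defining $p_\beta$ agree, exactly as in the standard treatment in \cite{VinWang}.

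Next, $P_\beta$ is a homomorphism with respect to concatenation, since it is defined as a product over legs. It is also conjugation-compatible under change of basepoint, so $\ker P_\beta$ is a normal subgroup containing all stable cycles. It therefore contains the normal subgroup generated by them.

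Finally, $P_\beta$ is continuous in the topology of $C(\alpha)$. This holds because each $p_\beta(x,y)$ depends continuously on the endpoints: it is a uniform limit of continuous functions, with uniformity coming from the uniform exponential contraction on bounded leaf pieces. Hence $\ker P_\beta$ is closed and contains the closure $S(\alpha)$.

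I expect the main obstacle to be this continuity and uniformity. The contraction estimates in Lemma \ref{gcontr} hold only for $n>N$ and locally ($d(x,y)<\epsilon$), so longer legs must first be subdivided into short pieces.
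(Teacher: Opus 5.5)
Your proof is correct, and it takes a more direct route than the paper. The paper justifies the lemma only by remarking that stable cycles are contractible, i.e.\ iterating the common contracting element shrinks the cycle into an arbitrarily small ball. Contractibility alone cannot be the reason: every cycle in $C_0(\alpha)$ is contractible, yet Propositions~\ref{minper} and~\ref{bijalpha} are needed to kill $P_\beta$ there. To turn the shrinking picture into a proof, one would combine three ingredients: the equivariance $P_\beta(\alpha(na)\gamma)=\beta(na,x_0)\,P_\beta(\gamma)\,\beta(na,x_0)^{-1}$, smallness of $p_\beta$ near the diagonal, and a growth bound on $\beta(na,x_0)$ to absorb the conjugation. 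Your telescoping identity $\prod_{i}\beta(na,x_i)^{-1}\beta(na,x_{i+1})=\mathrm{id}$, valid for every $n$, isolates the actual mechanism. Because a single element contracts every leg, all the defining limits can be taken along the same sequence, and the argument uses nothing beyond the existence of those limits, so it is shorter and more robust. You also supply the passage from stable cycles to their closed normal closure, via the homomorphism property and continuity of $P_\beta$. The paper leaves this implicit, although it tacitly needs continuity of $P_\beta$ anyway when it applies Proposition~\ref{minper}.

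The only soft spot is your independence-of-$a$ paragraph. For the non-abelian target $G^c$, the correction term $\beta(mb,\alpha(na)x)^{-1}\beta(mb,\alpha(na)y)$ is sandwiched between $\beta(na,x)^{-1}$ and $\beta(na,y)$. Hölder continuity plus contraction therefore does not let you discard it. You also need a subexponential bound on conjugation by $\beta(na,x)$; this is available in the paper's setting, since $G^c$ is abelian in the generic case and the constant part of the cocycle lies in $Z(G^c)$ otherwise. In addition, the correction term must be small uniformly in $m$, which amounts to a Hölder bound for $p_\beta$ near the diagonal. However, the paper's definition of $p_\beta$ (``for any $a$ with $\chi_i(\alpha(a))<0$'') already presupposes this independence, so that paragraph is not needed for the lemma.
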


Consequently the Lyapunov cycle functional factors through the quotient
$C(\alpha)/S(\alpha)$.

\subsection{Lyapunov cycles of $\alpha_0$ and $\hat\alpha$}

\subsubsection{Lifting to the universal central extension}
Let $\tilde G$ denote the universal cover of $G$ if $G$ is semisimple,
or the universal central extension of $G$ (as defined in Section
\ref{cenext}) if $G$ is the semidirect product $G_0\ltimes_\rho\R^N$.
The lattice $\Gamma\subset G$ naturally extends to a lattice
$\tilde\Gamma\subset\tilde G$. Let $\Theta:\tilde G\to G$ be the
covering map (in the universal‑cover case) or the factoring map (in the
central‑extension case). This induces a map, also denoted
$\Theta:\tilde G/\tilde\Gamma\to G/\Gamma$.

We first lift the homogeneous action $\alpha_0$ to $\tilde G$. Fix a
generating set $\{f_0,g_0\}$ of $\alpha_0$. Choose lifts $\tilde f_0$ and
$\tilde g_0$ of $f_0$ and $g_0$ to $\tilde G$. Note that $\tilde f_0$
and $\tilde g_0$ need not commute in $\tilde G$, but since $f_0$ and
$g_0$ commute in $G$ we have
$\tilde f_0\tilde g_0=\tilde g_0\tilde f_0\cdot c$ for some
$c\in Z(\tilde G)$. This defines an action
$\tilde\alpha_0:F_2\to\tilde G\subset\mathrm{Diff}(\tilde G/\tilde\Gamma)$.
The stable and unstable foliations of $\tilde\alpha_0(a)$ for
$a\in F_2$ are preserved by each other, so we may define the Lyapunov
foliations of $\tilde\alpha_0$ as their maximal intersections. By
construction $\Theta$ maps Lyapunov foliations of $\tilde\alpha_0$ to
those of $\alpha_0$.

Next we lift the topological perturbation $\hat\alpha$ to
$\tilde G/\tilde\Gamma$. There is a natural embedding of Lie algebras
$\mathfrak g(G)\hookrightarrow\mathfrak g(\tilde G)$. For
$a=(1,0)$ or $(0,1)$ and $\tilde x\in\tilde G/\tilde\Gamma$ with
$x=\Theta(\tilde x)$, define
\[
\tilde\alpha(a,\tilde x)
=\exp_{\tilde G}\Big(\log_{G}\big(\hat\alpha(a,x)\,\alpha_0(a,x)^{-1}\big)\Big)\,
\tilde\alpha_0(a,\tilde x),
\]
where $\log_G$ is well defined because $\hat\alpha(a,x)\,\alpha_0(a,x)^{-1}$
is $C^0$–close to the identity. This prescription extends to an action
$\tilde\alpha:F_2\to\mathrm{Diff}(\tilde G/\tilde\Gamma)$.

Under the assumption that either $f_0$ has no zero weight or
$\rho$ has no symplectic component, the centralizer
$\CZ_0(f_0)=G^c$ embeds into $\tilde G$ as a subgroup. A direct
calculation (see Equations (12) and (13) of \cite{VinWang}) shows that
the topological stable, unstable and center foliations of $\hat\alpha$
lift uniquely to topological stable, unstable and center foliations of
$\tilde\alpha$. Since these maps commute when projected to $X$, the
elements of $\tilde\alpha$ preserve the lifted foliations. Therefore the
topological Lyapunov foliations and the center foliation of $\hat\alpha$
lift naturally to those of $\tilde\alpha$ in $\tilde G/\tilde\Gamma$.

Finally, the cocycle $\beta$ lifts to
$\tilde\beta:F_2\times\tilde G/\tilde\Gamma\to G^c$ by
\[
\tilde\beta(a,\tilde x)=\beta(a,\Theta(\tilde x)),
\]
and becomes a cocycle over $\tilde\alpha$.

\subsection{Stable cycles ``generate'' contractible cycles}

We first show that $C(\tilde\alpha)/S(\tilde\alpha)$ is small for the
action $\tilde\alpha$ by comparing it with the corresponding set for
$\tilde\alpha_0$.

It is proved in Section 7 of \cite{VinWang} that the image of the
$su$–functional on contractible cycles of $\tilde\alpha_0$ is trivial.

\begin{prop}[Theorem 7.2, \cite{VinWang}]\label{minper}
Let $\alpha_0:\Z^2\to\mathrm{Diff}(X)$ be a genuinely higher-rank restriction of the
(twisted) Weyl chamber flow in $X=G/\Gamma$. If
$G=G_0\ltimes_\rho\R^N$ is a semidirect product, assume further that
either $\rho$ has no symplectic component or $\alpha_0$ has no zero
weight in $\R^N$. Let $\tilde\alpha_0: F_2\to \mathrm{Diff}(\tilde G/\tilde \Gamma)$ be the lift of $\alpha_0$ to the
universal central extension $\tilde G$. Then
$C^0(\tilde\alpha_0)/S(\tilde\alpha_0)$ is minimally almost periodic;
that is, any continuous homomorphism from
$C^0(\tilde\alpha_0)/S(\tilde\alpha_0)$ to a Lie group is trivial.
\end{prop}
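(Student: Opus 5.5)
This is cited as Theorem 7.2 of \cite{VinWang}. The plan is to follow the geometric method of \cite{DK,Vinhage,VinWang}: identify the quotient $C^0(\tilde\alpha_0)/S(\tilde\alpha_0)$ with a quotient of an algebraic object, and then use the structure of higher-rank groups to force every continuous homomorphism to a Lie group to vanish.

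First, we pass from cycles to words. Since the Lyapunov foliations of $\tilde\alpha_0$ are cosets of the subgroups $\exp(V_\mu)$, with $\mu$ ranging over coarse Lyapunov classes of $\Lambda_{\alpha_0}$, a Lyapunov path is encoded by a word in elements $u_i\in\exp(V_{\mu_i})$. We therefore consider the free product $\mathcal P=\ast_\mu \exp(V_\mu)$ of the coarse Lyapunov subgroups. Contractible cycles correspond to words whose product in $\tilde G$ is trivial, i.e.\ to the kernel of the natural map $\pi:\mathcal P\to\tilde G$. This map is surjective onto $\tilde G$ because of accessibility (Proposition \ref{faccess}) together with perfectness of $\mathfrak g$.

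Second, we show that stable cycles produce the relations of a Steinberg-type presentation. For any two coarse Lyapunov classes $\mu,\nu$ that are not negatively proportional, the genuinely higher-rank hypothesis (the last item of Definition \ref{topper}) gives an element contracting both. Hence the commutator relations $[u_\mu,u_\nu]=\prod_{\lambda=a\mu+b\nu}u_\lambda$, together with the relations internal to each $\exp(V_\mu)$, are stable cycles. Consequently $C^0/S$ is a quotient of $\ker(\mathcal P/\langle\text{these relations}\rangle\to\tilde G)$. By the theory of Steinberg groups and universal central extensions (Section \ref{cenext}, Propositions 5.6--5.7 of \cite{VinWang}), the group $\mathrm{St}=\mathcal P/\langle\cdot\rangle$ is a central extension of $\tilde G$. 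The kernel is central, abelian, and comes from $K_2$-type symbols $\{s,t\}$ in each rank-one $\mathfrak{sl}_2$-subgroup, plus the symplectic classes $\Omega_\rho$ in the twisted case.

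Third, we kill the residual central kernel modulo continuous maps. The symbols are bimultiplicative and continuous in $(s,t)\in\R^\times\times\R^\times$, so their closure is a quotient of $K_2(\R)$. Any continuous homomorphism from $K_2(\R)$ (or from its topological quotient by the stable-cycle closure) to a Lie group is trivial, because $K_2(\R)$ is divisible modulo $\{\pm1\}$-torsion and has no nontrivial continuous characters (Milnor's computation). One must check that the symbols arising in $\tilde G$ already vanish, since $\tilde G$ is the universal central extension. The remaining symbol classes are then nontrivial only as abstract elements, and they lie in the closure of the stable cycles.

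The main obstacle is the twisted case, where weights $\nu$ and roots $r$ interact and $\Omega_\rho$ can contribute extra central elements. If $\alpha_0$ has a zero weight, some $\mathfrak e_0$-cycles are not stable; this is exactly why the hypothesis excludes the combination of zero weights with symplectic components of $\rho$. I would treat this step by showing that each relation $[e_\nu,e_{\nu'}]=\omega(\cdot,\cdot)$ with $\nu+\nu'=0$ is already accounted for in $\tilde G$, which holds because the universal central extension includes $\Omega_\rho$. Any remaining relation with $\nu(a)=0$ for all $a$ in the action is then absent, since there are no zero weights.
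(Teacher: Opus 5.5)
The paper does not prove this statement; it quotes Theorem 7.2 of \cite{VinWang}. Your outline has the general architecture of that line of work: words in coarse Lyapunov subgroups, stable cycles as commutator relations, a central kernel of $K_2$-type, and continuity. However, the step carrying all the weight is only asserted ``by the theory of Steinberg groups'', and that theory does not apply. The step in question is that $\mathcal P/\langle\text{stable relations}\rangle$ is a central extension of $\tilde G$ whose kernel is generated by symbols.

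Let $P\subset\mathfrak h$ be the plane of $\alpha_0$. Stable cycles only give relations valid inside the stable subgroups $U_g$ of elements $g$ of $\alpha_0$, i.e.\ among classes lying in a common open half-plane of functionals on $P$. There is no relation between classes that are negatively proportional on $P$, and roots or weights vanishing on $P$ are not generators at all.

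For non-generic $f_0$ as in Theorem~\ref{cenrignongen:twisted}, this is a proper sub-presentation of Steinberg's. Since $g_0\in Z(\CZ_0(f_0))$, every root or weight vanishing at $f_0$ vanishes on all of $P$. So if such a root is written $r=\mu+\nu$ with $\mu(f_0)\neq0$, then $\mu,\nu$ are exactly opposite on $P$, and the Steinberg relation $[x_\mu(s),x_\nu(t)]=x_r(\pm st)$ is not a stable cycle. An example is $t_1-t_2=(t_1-t_3)+(t_3-t_2)$ for $f_0=\mathrm{diag}(e^2,e^2,e^{-1},e^{-3})$. Hence neither Steinberg's nor Matsumoto's theorem can be quoted.

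What is missing is a proof that, using genuine higher rank, the coarse stable relations force the missing relations and rebuild the central root and zero-weight subgroups. This is where that hypothesis is actually needed; a common contracting element for two non-opposite classes exists automatically on a plane. This is where the real work in \cite{Vinhage} and Section 7 of \cite{VinWang} lies. Propositions 5.6--5.7 of \cite{VinWang}, which you invoke, only identify $\tilde G$ and say nothing about this presentation. Your twisted-case paragraph also covers only the branch without zero weights. If zero weights occur (and $\rho$ has no symplectic component), the $\mathfrak e_\nu$ with $\nu|_P=0$ are again not generators, and their relations have to be derived.

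Your third step also needs repair.
\begin{enumerate}
\item It is false that every continuous homomorphism from $K_2(\R)$ to a Lie group is trivial: the Hilbert symbol detects $\{-1,-1\}$. Divisibility is beside the point, since divisible groups such as $\R$ have many continuous characters.
\item What works, for non-symplectic types, is the following. A continuous $\psi$ composed with the symbol map gives a continuous, bimultiplicative, antisymmetric $c$ on $\R_{>0}\times\R_{>0}$ with values in an abelian Lie group. Hence $c(e^x,e^y)=\exp(xy\,v)$ with $2v=0$, so $c$ is trivial. Since $\{-1,t\}=\{-1,\sqrt t\}^2=1$ for $t>0$, $\psi$ can only see $\{-1,-1\}$. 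That class is the part detected by $\pi_1(G_0)$, so it is not a contractible cycle in $\tilde G$.
\item For simple factors of type $C_n$ (e.g.\ $\mathrm{Sp}_{2n}(\R)$, which the hypotheses allow), Matsumoto's symbols are symplectic rather than bimultiplicative and $\pi_1=\Z$. This case needs a separate argument.
\item Your closing claim, that the remaining abstract symbol classes lie in the closure of the stable cycles, is unsupported. It would give $C^0(\tilde\alpha_0)=S(\tilde\alpha_0)$, which is more than the statement asserts. Minimal almost periodicity is exactly what the continuous-symbol argument gives, and nothing more is needed.
\end{enumerate}
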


We obtain a similar result for $\tilde\alpha$–cycles. Define a natural
map $P$ from topological Lyapunov paths of $\hat\alpha$ to Lyapunov
paths of $\alpha_0$ as follows. Given an $\tilde\alpha$-Lyapunov path
with endpoints $[x_0,x_1,\dots,x_k]$, let $[y_0,y_1,\dots,y_k]$ be the
(unique) $\tilde\alpha_0$–Lyapunov path with $y_0=x_0$, $y_k\in
\mathcal W^c_{\tilde\alpha_0}(x_k)$, and for each $1\le i\le k$ we have
$y_i\in\mathcal W^j_{\tilde\alpha_0}(y_{i-1})$ whenever
$x_i\in\mathcal W^j_{\tilde\alpha}(x_{i-1})$. This defines $P$. Define
an inverse map $Q$ from Lyapunov paths of $\tilde\alpha_0$ to Lyapunov
paths of $\tilde\alpha$ similarly.

A priori a Lyapunov cycle of $\tilde\alpha$ need not map to a cycle of
$\tilde\alpha_0$. However every stable cycle of $\tilde\alpha$ maps to
a stable cycle of $\tilde\alpha_0$. Using Proposition \ref{minper} we
conclude that contractible $\tilde\alpha_0$–cycles map to contractible
$\tilde\alpha$–cycles. The inverse direction is subtler: for a
uniformly bounded $\tilde\alpha_0$–path $\gamma$ whose endpoints have
center distance $1<d_{\mathcal W^c_{\tilde\alpha_0}}(\gamma(0),\gamma(1))<2$,
since $\tilde\alpha$ is a topological perturbation of $\tilde\alpha_0$,
$Q(\gamma)$ cannot be a closed cycle; thus images of endpoints of
$\tilde\alpha_0$–cycles mapped to $\tilde\alpha$ must be discrete.
However there is a homotopy from the identity cycle to any
$\tilde\alpha$–cycle in the relevant homotopy class, so the discrete
image must be trivial. The complete argument is given in Section 12 of
\cite{VinWang}.

\begin{prop}[Theorem 12.2, \cite{VinWang}]\label{bijalpha}
The canonical maps $P$ and $Q$ are bijections between $C^0(\tilde\alpha_0)$
and $C^0(\tilde\alpha)$, and these bijections send
$S(\tilde\alpha_0)$ to $S(\tilde\alpha)$.
\end{prop}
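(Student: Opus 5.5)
The plan is to transport cycles back and forth using the projection maps $P$ and $Q$, with the topological perturbation structure of Proposition~\ref{topph} doing all the work. Three things must be shown: (i) $P$ and $Q$ are well defined and inverse to each other on Lyapunov paths; (ii) $P$ and $Q$ send stable cycles to stable cycles; (iii) $P$ and $Q$ send contractible cycles to contractible cycles.

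For (i), the lifted foliations of $\tilde\alpha$ and $\tilde\alpha_0$ have the same dimensions and are $C^0$-close, and both are transverse to the common center foliation. So each leg $x_{i-1}\to x_i$ in $\mathcal W^j_{\tilde\alpha}$ has a unique companion leg in $\mathcal W^j_{\tilde\alpha_0}$. This companion is obtained from the local product structure: take $\mathcal W^j_{\tilde\alpha_0}(y_{i-1})\cap \mathcal W^{c}_{\tilde\alpha_0}$ of the $j$-center leaf through $x_i$. Uniqueness comes from the fact that the $j$-center leaves of both actions coincide. This is because $\mathcal W^i_\alpha$ was built by intersecting pulled-back $\mathcal W^{\mu,c}$-leaves with $\mathcal W^{s/u}_f$. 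Consequently $P$ and $Q$ are mutually inverse, up to moving the endpoint inside its center leaf.

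For (ii), take a stable $\tilde\alpha$-cycle, contained in $\mathcal W^s_{\tilde\alpha(a)}(x_0)$. Its image under $P$ lies in the stable leaf of $\tilde\alpha_0(a)$, since the foliations are indexed by the same functionals $\chi_i$. I must then show the image closes up, i.e.\ that the center displacement $y_k x_0^{-1}$ is trivial. To do this, iterate by $\tilde\alpha(na)$. By Lemma~\ref{gcontr} the $\tilde\alpha$-cycle shrinks exponentially, while the center displacement of the $P$-image is multiplied by at most $e^{\epsilon^2 n}$ (the last bullet of Definition~\ref{topper}). Equivariance of $P$ up to $C^0$-small center errors then forces the displacement to be zero. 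The symmetric argument applies to $Q$. Normal closure and topological closure are preserved because $P$ and $Q$ are continuous in the endpoint topology, so $S(\tilde\alpha)$ and $S(\tilde\alpha_0)$ correspond.

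The main obstacle is (iii), namely that contractible cycles go to closed cycles. Consider the endpoint-displacement map $\Delta:C^0(\tilde\alpha_0)\to G^c$ (embedded in $\tilde G$, which uses the no-symplectic/no-zero-weight hypothesis), given by $\Delta(\gamma)=Q(\gamma)(1)\,x_0^{-1}$. I would show that $\Delta$ is a continuous homomorphism which is trivial on $S(\tilde\alpha_0)$ by (ii). Proposition~\ref{minper} would then kill it. Multiplicativity is delicate, because $Q$ of a concatenation differs from the concatenation of $Q$'s by holonomy. My first attempt is instead to argue directly. The image $\Delta(C^0)$ is a discrete set: a bounded $\tilde\alpha_0$-cycle of $Q$-displacement between $1$ and $2$ contradicts the $C^0$-closeness of holonomies on paths of length $\le 1/\epsilon$. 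On the other hand, $C^0(\tilde\alpha_0)$ is path-connected, since any contractible cycle can be homotoped to the trivial one through cycles by shrinking the legs. Since $\Delta$ is continuous, its image is connected, hence it is $\{e\}$. Running the same argument with $P$ gives the reverse inclusion, and together these yield the bijection.
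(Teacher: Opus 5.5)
The gap is in step (iii), where you set Proposition~\ref{minper} aside and argue ``discrete image of a path-connected space.'' Steps (i) and (ii) are fine in outline. In (ii) the exact mechanism is that $P(\tilde\alpha(nb)\gamma)$ is a left $G^c$-translate of $\tilde\alpha_0(nb)P(\gamma)$, because $G^c$ normalizes the coarse Lyapunov subgroups and $f_0,g_0$ are central in $G^c$. So the displacement is only conjugated by an element of size $O(\epsilon n)$, which cannot offset the exponential shrinking.

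Neither ingredient of (iii) is established.
\begin{enumerate}
\item \emph{Path-connectedness.} Rescaling the legs of a closed Lyapunov cycle does not produce closed cycles, because the coarse Lyapunov foliations do not commute. For cycles built from opposite subgroups $\exp V_\mu$, $\exp V_{-\mu}$ the parameters enter as $t$ and $t^{-1}$. A null-homotopy of the underlying loop passes through loops that are not Lyapunov cycles. So ``shrinking the legs'' gives no path in $C^0(\tilde\alpha_0)$. Relating an arbitrary contractible cycle to the trivial one is exactly where the paper needs Proposition~\ref{minper}, which rests on the algebraic structure of $\tilde G$; a soft deformation argument does not replace it.
\item \emph{Discreteness.} The annulus argument does not show $\Delta(C^0(\tilde\alpha_0))$ is discrete. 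The $C^0$-closeness of holonomies only controls cycles with boundedly many legs of length $\le 1/\epsilon$, while elements of $C^0$ are arbitrarily long. Even for bounded cycles it only shows the displacement is small, not that small nonzero values are excluded.
\end{enumerate}
The reverse direction, ``by the same argument with $P$,'' inherits both defects.

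The paper defers details to Section~12 of \cite{VinWang}. There the direction ``$Q$ maps $C^0(\tilde\alpha_0)$ to closed $\tilde\alpha$-cycles'' comes from Proposition~\ref{minper}, fed by the fact that stable cycles go to stable cycles, i.e.\ your step (ii). Only the converse is handled by a discreteness-plus-homotopy argument, with the first direction already in hand. So the multiplicativity you called delicate has to be repaired, not bypassed.

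In the setting of Theorem~\ref{cenrignongen:twisted} the repair is straightforward. Replace $Q(\gamma)(1)\,x_0^{-1}\in G^c$ by the unique $h_\gamma$ in the $\phi$-conjugate of $\CZ_0(f)$ (suitably lifted) with $Q(\gamma)(1)=h_\gamma(x_0)$. It exists because $\CZ_0(f)$ acts simply transitively on lifted center leaves there. Since $\CZ_0(f)$ fixes center leaves and preserves the topological Lyapunov foliations, the $Q$-path of $\gamma$ started at $h(x_0)$ is the $h$-image of the one started at $x_0$. Hence $h_{\gamma_1*\gamma_2}=h_{\gamma_1}h_{\gamma_2}$, so $\gamma\mapsto h_\gamma$ is a continuous homomorphism into a Lie group. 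It is trivial on stable cycles by (ii), hence on $S(\tilde\alpha_0)$, and therefore trivial on $C^0(\tilde\alpha_0)$ by Proposition~\ref{minper}.
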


\begin{rmk}
The proof in \cite{VinWang} uses only the smallness of the $C^0$–distance
between the $\tilde\alpha$–foliations and the $\tilde\alpha_0$–foliations;
in our setting this is guaranteed by $\hat\alpha$ being a topological
perturbation of $\alpha_0$.
\end{rmk}

Projecting to $X=G/\Gamma$, this shows that the holonomy group of
$\hat\alpha$ coincides with the holonomy group of $\alpha_0$. Consequently
the holonomy group of $\hat\alpha$ acts freely and transitively on the universal cover of a center leaf.

\begin{prop}\label{hologroup}
The group of $su$–holonomies of $\hat\alpha$ acts freely and transitively
on $\tilde \W^c_{f_0}(x)$, for any $x\in X$.
\end{prop}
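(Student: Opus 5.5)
The plan is to identify the $su$-holonomy group of $\hat\alpha$ on the universal cover of a center leaf with that of $\alpha_0$, for which the claim is an algebraic fact. The identification comes from Proposition \ref{bijalpha}.

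First, the model $\alpha_0$. Its stable and unstable holonomies between lifted center leaves are right translations, as in the proof of Proposition \ref{gtransl}: $h^{f_0}_{\gamma_0}(\tilde z)=\tilde z\tilde x^{-1}\tilde y$. Hence the holonomy of a closed Lyapunov path through $x$, lifted to the universal cover $\tilde\W^c_{f_0}(x)\simeq \widetilde{G^c}$, is right multiplication by an element of $\widetilde{G^c}$. Right multiplication by a nontrivial element has no fixed points, so the action is free. For transitivity, accessibility (Proposition \ref{faccess}) together with the Lie-algebraic fact that the brackets $[V_\mu,V_\nu]$ of the coarse Lyapunov directions generate $\mathfrak h\oplus\bigoplus_{r(a)=0}\mathfrak g_r\oplus\mathfrak e_0$ (Corollary 9.10 of \cite{VinWang}) shows that endpoints of lifted cycles sweep out an open neighborhood of the identity in $\widetilde{G^c}$. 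Since these endpoints form a subgroup, they give all of the connected group $\widetilde{G^c}$. This is also where the central extension $\tilde G$ enters: under the hypothesis of no zero weights or no symplectic component, $G^c$ embeds in $\tilde G$. As a result, the cycle group modulo $C^0(\tilde\alpha_0)$ maps onto $\widetilde{G^c}$ with no extra central discrete part acting trivially.

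Next, transfer to $\hat\alpha$. The holonomy of a $\hat\alpha$-Lyapunov path from $x$ is determined by the endpoint map on paths starting at the points of $\tilde\W^c_{f_0}(x)$. The map $Q$ of Proposition \ref{bijalpha} sends each $\tilde\alpha_0$-path to the $\tilde\alpha$-path with the same combinatorial leg data. It is a bijection $C^0(\tilde\alpha_0)\to C^0(\tilde\alpha)$, compatible with concatenation and with passing to quotients by stable cycles, which act trivially on holonomy by Lemma \ref{lem:stableinkernel} and their contractibility. It therefore induces an isomorphism between the holonomy groups, viewed as quotients of the path groups by the cycles that act trivially. Free action then transfers directly: if a $\hat\alpha$-holonomy fixes a point $\tilde z$, the corresponding path based at $\tilde z$ is a cycle in $C^0(\tilde\alpha)$. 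Its $P$-image is then a cycle of $\tilde\alpha_0$, whose holonomy is trivial by freeness of the model. Hence the original holonomy is trivial.

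Transitivity follows from the same correspondence together with surjectivity of the endpoint map. For any target point there is a model path reaching its $\phi$-preimage. Applying $Q$ and a small correction within the center gives a $\hat\alpha$-path. A degree/continuity argument then shows that the orbit is open, since holonomies depend continuously on the path by Theorem \ref{c1hol}. The orbit is also closed, because the group is topologically a quotient of a path space that is locally compact in the relevant sense. Connectedness of the leaf then gives transitivity.

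The main obstacle is making precise that $P$ and $Q$ commute with holonomy, not merely with closedness of cycles. Proposition \ref{bijalpha} is stated for cycles, while holonomies involve paths with distinct endpoints in the center. I would first try to encode a path with endpoint displacement in the center as a cycle by appending a center segment, and then use Proposition \ref{minper} to rule out a nontrivial continuous homomorphism measuring the discrepancy.
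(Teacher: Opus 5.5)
Your route is the paper's. The paper deduces the proposition in one line from Proposition \ref{bijalpha} (``projecting to $X$, the holonomy group of $\hat\alpha$ coincides with that of $\alpha_0$''). It then reads off freeness and transitivity from the model, where holonomies are right translations by $G^c$.

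The paper takes that identification as immediate. You correctly isolate it as the crux, but your write-up does not supply it. In the freeness paragraph, ``$P(\gamma)$ is a model cycle with trivial holonomy, hence the original holonomy is trivial'' is a non sequitur. The condition $h_\gamma(w)=w$ means that the transported path $\gamma_w$ closes up, and nothing you say relates $\gamma_w$ to $P(\gamma)$. Your proposed repair also fails. A path with a center segment appended is not a Lyapunov path, hence not an element of $C(\tilde\alpha)$, so Proposition \ref{minper} does not apply to it.

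What is missing is that $P$ and $Q$ are determined by the itinerary of center leaves. An $\hat\alpha$-leg of type $\mu$ stays in a leaf of $\W^{\mu,c}_{f_0}$. That leaf is saturated by center leaves, and inside it (on the cover) a $\W^\mu_{\alpha_0}$-leaf meets each center leaf exactly once. So $P(\gamma)$ is the model path with the same leg types passing through the same center leaves $\W^c(x_0),\dots,\W^c(x_k)$. This is what makes $P$ well defined, and it gives $Q\circ P=\mathrm{id}$ on all paths, not only on cycles.

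Holonomy transport preserves itineraries. Hence $P(\gamma_w)$ is exactly the model transport of $P(\gamma)$ to $w$. The freeness argument then goes as follows.
\begin{itemize}
\item If $h_\gamma(\tilde z)=\tilde z$, then $P(\gamma_{\tilde z})$ is a closed model cycle by Proposition \ref{bijalpha}.
\item Its model transport to any $w$ is closed, since model holonomies are $z\mapsto z y_0^{-1}y_k$; and this transport equals $P(\gamma_w)$.
\item So $\gamma_w=Q(P(\gamma_w))$ is closed by Proposition \ref{bijalpha} again, i.e.\ $h_\gamma(w)=w$.
\end{itemize}
Stable cycles play no role here, and Lemma \ref{lem:stableinkernel} concerns $P_\beta$, not holonomy.

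Transitivity does not transfer through $Q$, which only lands in the right center leaf. It is accessibility on the cover, which your degree argument gives. However, the ``correction within the center'' is not an admissible $su$-move, and closedness of orbits is automatic once every orbit is open.

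Under the hypotheses of Theorem \ref{cenrignongen:twisted}, freeness even has a two-line proof. $\CZ_0(f)$ commutes with the global $su$-holonomies (as in the proof of Proposition \ref{gtransl}) and acts transitively on the lifted center leaf (as in Proposition \ref{choiceg}(3)). So a holonomy fixing one point fixes all of them.

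Finally, work on the $G^c$-coset $\tilde\W^c_{f_0}(x)\subset G$, as the statement does, rather than on $\widetilde{G^c}$. The model only produces translations by the connected subgroup of $\tilde G$ with the Lie algebra of $G^c$, and that subgroup need not be simply connected. Also, cycles modulo contractible cycles give $\tilde\Gamma$, not $\widetilde{G^c}$. What maps onto the center group is the set of Lyapunov paths from $x$ ending in its center leaf.
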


Returning to the Lyapunov cycle functional $P_\beta$, denote by
$C^{\tilde G}(\hat\alpha)$ the image of $C^0(\tilde\alpha)$ in
$C(\hat\alpha)$. By Lemma \ref{lem:stableinkernel} and Propositions \ref{minper} and \ref{bijalpha}, we have
$P_\beta(C^{\tilde G}(\hat\alpha))=\mathrm{id}$, so $P_\beta$ factors
through $C_{x_0}/C^{\tilde G}_{x_0}\simeq\tilde\Gamma$.

On the other hand we have the following lemma.

\begin{lem}[Lemma 11.1, \cite{VinWang}]\label{latticefin}
Let $\psi:\tilde\Gamma\to G^c$ be a sufficiently small
homomorphism. Then $\psi$ is trivial.
\end{lem}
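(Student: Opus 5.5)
We must show that a sufficiently small homomorphism $\psi:\tilde\Gamma\to G^c$ is trivial. The plan is to combine Margulis-type rigidity for the semisimple part with vanishing of the first cohomology for the unipotent (translation) part, and to use smallness to kill the residual finite or discrete ambiguity.

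First we pass to the abelianization. The target $G^c=\exp(\mathfrak h\oplus_{r(a)=0}\mathfrak g_r\oplus\mathfrak e_0)$ is a connected Lie group, and a sufficiently small homomorphism lands in a small neighborhood of the identity. We will show that $\psi$ has image in a subgroup without small nontrivial subgroups, and that the image is forced to be trivial. Concretely, choose a finite generating set $S$ of $\tilde\Gamma$. This is possible since $\tilde\Gamma$ is a lattice in $\tilde G$, and cocompact (or at least finitely generated by Kazhdan's property (T)). Smallness means $\psi(S)$ lies in a small ball $U$ around $\mathrm{id}$.

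The first key step is that $\tilde\Gamma$ has property (T). The ambient group $\tilde G=\tilde G_0\ltimes N$ has property (T), because $G_0$ has no rank-one factors and $\rho$ has no trivial components; this is a standard fact for such semidirect products. Lattices inherit property (T), so $H^1(\tilde\Gamma,\R)=0$ and the abelianization of $\tilde\Gamma$ is finite. Beyond that, by the Margulis normal subgroup theorem and superrigidity, any homomorphism of $\tilde\Gamma_0$ into a Lie group either has finite image or extends to a continuous homomorphism of $\tilde G_0$.

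The second step is to show that any homomorphism $\psi$ close to trivial has precompact (indeed finite) image. The natural tool is local rigidity in the sense of Weil: since $H^1(\tilde\Gamma,\mathrm{Ad}\circ\psi)$ should vanish near the trivial representation, every representation close to the trivial one is conjugate to it.

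More concretely, we use the Zassenhaus neighborhood. In a Lie group there is a neighborhood $U$ of $\mathrm{id}$ such that any subgroup generated by elements of $U$ is contained in a connected nilpotent Lie subgroup. Hence $\psi(\tilde\Gamma)$ lies in a connected nilpotent group $L\subset G^c$. We then restrict $\psi$ to a finite-index subgroup and use the lower central series of $L$: each successive quotient is abelian, and the induced map from $\tilde\Gamma$ to that abelian quotient factors through $H_1(\tilde\Gamma)$, which is finite. Since the map is small, it must be trivial, as a small subgroup of a torsion-free vector group is trivial. Inducting down the lower central series, $\psi$ is trivial.

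The main obstacle is justifying property (T) (or $H^1=0$ with nontrivial coefficients) for $\tilde\Gamma$ when $N$ is a nilpotent central extension, and verifying that the Zassenhaus argument works uniformly rather than merely on a finite-index subgroup. To make the latter work, we arrange that the torsion in the abelianization is absorbed by smallness, since elements of finite order near the identity in $G^c$ are trivial.
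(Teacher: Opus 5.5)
Your argument rests on the claim that a Lie group has a neighborhood $U$ of the identity such that \emph{any} subgroup generated by elements of $U$ lies in a connected nilpotent Lie subgroup. That claim is false. The Zassenhaus (Kazhdan--Margulis) lemma requires the subgroup to be \emph{discrete}. Without discreteness, two rotations by small angles about different axes generate a dense, non-nilpotent subgroup of $SO(3)$, and the same happens in $SL_2(\R)$. Nothing makes $\psi(\tilde\Gamma)$ discrete, and the target really contains such subgroups in the case that matters. When $f_0$ lies on a wall, $G^c$ contains the Levi factor $\exp(\mathfrak h\oplus\bigoplus_{r(a)=0}\mathfrak g_r)$, hence copies of $SL_2(\R)$; the example $f_0=\mathrm{diag}(e^2,e^2,e^{-1},e^{-3})$ from the introduction is one instance. (When $G^c$ is abelian, e.g.\ $G^c=D\cong\R^k$ in the generic case, the lemma follows at once from finiteness of $\tilde\Gamma^{ab}$, without using smallness.) So your lower-central-series induction has nothing to start from.

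Your closing remark fails for the same reason: $SO(2)\subset SL_2(\R)\subset G^c$ contains elements of finite order arbitrarily close to the identity. Only elements of \emph{bounded} order are excluded near $\mathrm{id}$. Also, superrigidity gives ``precompact image or extension'', not ``finite image or extension''.

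The Weil local-rigidity idea you mention and then drop is what works. The relevant cohomology is at the trivial representation, where $\mathfrak g^c=\mathrm{Lie}(G^c)$ carries the trivial action, so $H^1(\tilde\Gamma,\mathfrak g^c)=\mathrm{Hom}(\tilde\Gamma,\mathfrak g^c)=0$ because $\tilde\Gamma^{ab}$ is finite. Here is the argument made concrete.
\begin{itemize}
\item Fix generators $s_1,\dots,s_k$ of $\tilde\Gamma$. Because $\mathrm{Hom}(\tilde\Gamma,\R)=0$, finitely many relations $r_1,\dots,r_m$ already give an exponent-sum matrix $E=(e_{ji})$ with $\ker E=0$. (The kernels form a decreasing chain of subspaces of $\R^k$ whose intersection is $\mathrm{Hom}(\tilde\Gamma,\R)$, so the chain stabilizes.)
\item Define $F:(G^c)^k\to(G^c)^m$ by $F(h)=(r_j(h_1,\dots,h_k))_j$. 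By Baker--Campbell--Hausdorff, $\log F_j(\exp u_1,\dots,\exp u_k)=\sum_i e_{ji}u_i+O(|u|^2)$.
\item Hence the differential of $F$ at the identity is $E\otimes\mathrm{id}_{\mathfrak g^c}$, which is injective, so $F$ is injective near $(e,\dots,e)$.
\item Since $\psi$ is a homomorphism, $F(\psi(s_1),\dots,\psi(s_k))=(e,\dots,e)=F(e,\dots,e)$. If every $\psi(s_i)$ is close to $e$, this forces $\psi(s_i)=e$, so $\psi$ is trivial.
\end{itemize}
This uses only finite generation of $\tilde\Gamma$ and $H^1(\tilde\Gamma,\R)=0$, which is where property (T) enters. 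It uses no structure of $G^c$ at all. The (T) input you flagged as the main obstacle is therefore not where the difficulty lies.
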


Moreover, since $\beta$ is close to a constant, $P_\beta$ can be made
arbitrarily small.

\begin{lem}\label{pbetasmall}
For any $\epsilon>0$ there exists $\delta>0$ such that if
$d_{C^1}(f,f_0)<\delta$ and $\beta$ is a cocycle over $\hat\alpha$ that
is $\delta$–close to a constant cocycle $\beta_0$ on a generating set of
$\Z^2$, then $d(p_\beta(x,y),\mathrm{id})<\epsilon$ for any
$x\in X$, $y\in\mathcal W^i_\#(x)$ with $d_{\mathcal W^i_\#}(x,y)<1$.
\end{lem}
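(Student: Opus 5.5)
The plan is to write $p_\beta(x,y)$ as a telescoping limit along the orbit of a contracting element. Then I will check that each factor of the product is close to the identity, and that the factors decay fast enough for the product to stay close to the identity.

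Fix $x$ and $y\in\mathcal W^i_\#(x)$ with $d_{\mathcal W^i_\#}(x,y)<1$. Using the last item of Definition \ref{topper}, choose $a$ in the generating-set cone with $\chi_i(a)<0$, and with $\alpha_0(a)$ not in the $\bar\epsilon$-cone of $\ker\chi_i$. Write $x_n=\hat\alpha(na)x$ and $y_n=\hat\alpha(na)y$. The cocycle identity gives
\[
\beta(na,x)^{-1}\beta(na,y)=\prod_{j=0}^{n-1}\beta(j a,x)^{-1}\bigl(\beta(a,x_j)^{-1}\beta(a,y_j)\bigr)\beta(ja,x).
\]
It is cleaner to write this as a product of conjugated increments: $\beta(a,x_j)^{-1}\beta(a,y_j)$ conjugated by $\beta(ja,x)$. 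Closeness of $\beta$ to the constant $\beta_0$ on generators gives $\beta(ja,x)\approx\beta_0(a)^j$ up to projective error. This follows by repeating the argument of Proposition \ref{gtransl} and Proposition \ref{choiceg}(2).

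Each increment is controlled by two estimates. The first is H\"older continuity of $\beta$: $d(\beta(a,x_j)^{-1}\beta(a,y_j),\mathrm{id})\le C\,d(x_j,y_j)^{\eta}$, with a constant $C$ controlled by the $C^1$-distance of $f$ to $f_0$. The second is a bound on the increment itself, which is at most $2\delta$ by closeness to a constant. Lemma \ref{gcontr} (transported through $\phi$) gives the decay: for $j>N$ we have $d(x_j,y_j)\le e^{j\theta\chi_i(a)/4}d(x,y)^{\theta^2}$. So I split the product into the first $N$ terms and the tail. The first $N$ terms are each within $2\delta$ of the identity. The tail is bounded by a geometric series $\sum_{j>N}Ce^{j\eta\theta\chi_i(a)/4}$, which can be made smaller than $\epsilon/3$ by enlarging $N$.

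The first $N$ terms then contribute error of order $N\delta$, which is small once $\delta\ll\epsilon/N$. Here $N$ depends only on $\alpha_0$, $a$ and $\epsilon$, not on $f$, which makes this choice legitimate.

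The main obstacle is the conjugation by $\beta(ja,x)\in G^c$. When $G^c$ is non-abelian (the non-generic case), $\mathrm{Ad}$ of $\beta_0(a)^j$ could amplify the increments. In that case I would use that $\beta_0(a)\in Z(\CZ_0(f_0))$ by the choice in Proposition \ref{choiceg}. Its adjoint action on $\mathfrak g(G^c)$ then has all eigenvalues of modulus $1$, since the roots vanishing on $f_0$ are inside the center and the remaining ones vanish on $g_0$ as well. The deviation of $\beta(ja,x)$ from $\beta_0(a)^j$ gives at most $e^{\epsilon^2 j}$ growth, by the center-growth item of Definition \ref{topper}. This is subexponential compared with the contraction rate $e^{j\eta\theta\chi_i(a)/4}$, provided $\bar\epsilon$ is small relative to $|\chi_i(a)|\eta\theta$. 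So the geometric series still converges uniformly and the estimate goes through.
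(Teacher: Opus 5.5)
Your proposal is correct and follows the paper's proof: you telescope $p_\beta(x,y)$ along the orbit of a contracting element, bound the first $N$ increments by $\delta$-closeness to the constant cocycle and the tail by the exponential contraction from Lemma \ref{gcontr}, and fix $N$ before choosing $\delta$. Your extra treatment of the conjugation by $\beta(ja,x)$ when $G^c$ is non-abelian covers a point the paper passes over silently. It is better justified not by the center-growth item but by writing $\beta(ja,x)=\beta_0(a)^j e_j$, with $\beta_0(a)$ central in $G^c$ (so $\mathrm{Ad}(\beta_0(a))$ is trivial on $\mathfrak g(G^c)$) and $d(e_j,\mathrm{id})=O(j\delta)$, which gives $\|\mathrm{Ad}(\beta(ja,x))\|\le e^{O(j\delta)}$.
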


\begin{proof}
    Fix \( a \in \mathbb{Z}^2 \) such that \( \hat{\alpha}(a) \) is exponentially contracting on \( \W^i_\# \). Let \( g(x) = \hat{\alpha}(a, x) \). Note that the norm of \( a \) can be uniformly bounded. Define \( p_n = \beta(na, x)^{-1} \beta(na, y) \). Suppose \( g \) contracts \( \W^i_\# \) with exponent \( \chi < 0 \). Then by Proposition \ref{topph}, there exists a constant \( C \) such that
    \[
    d(p_{\beta}(x,y), \mathrm{id}) \le \sum_{i=0}^{\infty} d(p_i, p_{i+1}) \le \sum_{i=0}^{n} d(p_i, p_{i+1}) + \sum_{i=n+1}^{\infty} d(p_i, p_{i+1}).
    \]
    Now for any $\epsilon_0>0$, there exists $\delta>0$ such that,
    \[
    \sum_{i=0}^{n} d(p_i, p_{i+1}) \le n C \epsilon_0 \sup_{x\in X}d(\beta(a, x), \beta_0(a, x)),
    \]
    and
    \[
    \sum_{j=n+1}^{\infty} d(p_j, p_{j+1}) \le \sum_{j=n+1}^{\infty} C e^{\chi j} d(x,y)^{\theta^2}.
    \]
    By choosing \( n \) sufficiently large, the second term becomes small. Then by choosing \( \delta \) sufficiently small, the first term also becomes small, yielding the desired result.
\end{proof}

Therefore Theorem \ref{cocrig} follows from the propositions above.

\begin{proof}[Proof of Theorem \ref{cocrig}]
By Propositions \ref{minper} and \ref{bijalpha} the Lyapunov cycle
functional $P_\beta$ factors through $\tilde\Gamma$. By Lemma
\ref{pbetasmall}, $P_\beta$ can be made arbitrarily close to the
identity. Hence Lemma \ref{latticefin} implies that the image of
$P_\beta$ is trivial when $\beta$ is sufficiently close to a constant.
By Proposition \ref{consttrivialfunctional} the cocycle $\beta$ is
cohomologous to a constant.
\end{proof}

 \subsection{Elements of $\alpha$ are partially hyperbolic}\label{alphaph}

In this section we prove the following.

\begin{prop}\label{parthyp}
Let $f$ be a $C^1$-small perturbation of $f_0$ as in Theorems \ref{cenrigtwgen} or \ref{cenrignongen:twisted}. Let \(h\) be the Hölder conjugacy from Proposition~\ref{prop:fholconj},
so that \(h^{-1}\CZ_0(f)h\subset \CZ_0(f_0)\). Then any
$g\in\CZ_0(f)$ that corresponds to a generic element of the (twisted) Weyl
chamber flow is a partially hyperbolic diffeomorphism. In particular, this shows 
the images of the Lyapunov foliations of $g$ under the Hölder conjugacy
are smooth.
\end{prop}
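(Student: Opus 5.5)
The plan is to transfer partial hyperbolicity from the algebraic model to $g$ in two steps. First we produce a continuous $Dg$-invariant splitting with the right rates along the stable and unstable directions of $f$. Then we control $Dg$ on the center bundle $E^c_f$. The derivative bound will come from the Hölder conjugacy $h$ combined with the Lyapunov exponents, in the spirit of the proof of Theorem 7.11 of \cite{Wang}.

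\textbf{Step 1: identify the model element and the candidate splitting.} Let $g\in\CZ_0(f)$ with $g_0:=h^{-1}gh\in\CZ_0(f_0)$ a generic element of the (twisted) Weyl chamber flow. Genericity means $\mu(g_0)\neq 0$ for every $\mu\in\Lambda_0$, so every coarse Lyapunov foliation of $g_0$ is either contracted or expanded. The candidate splitting for $g$ is
\[
TX=E^c_f\oplus\bigoplus_{\mu(g_0)<0}E^\mu\oplus\bigoplus_{\mu(g_0)>0}E^\mu ,
\]
where $E^\mu$ is the tangent bundle of the topological Lyapunov foliation $\mathcal W^\mu_\#$. These leaves are subfoliations of $\mathcal W^s_f$ or $\mathcal W^u_f$, and their smoothness a priori is the issue. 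I would avoid needing smoothness by grouping directions: set $E^-_g:=\bigoplus_{\mu(g_0)<0}$ and $E^+_g:=\bigoplus_{\mu(g_0)>0}$, intersected inside $E^s_f\oplus E^u_f$ with the $f$-splitting.

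\textbf{Step 2: use Pesin theory on a Lebesgue-class measure.} Let $m$ be $h_*$ of Haar measure. Since $h$ conjugates $\langle f, g\rangle$ to an affine action, I would first argue that $f$ preserves a smooth volume, or else use a Pesin/entropy argument. The key fact is that the $\Z^2$-action $\langle f,g\rangle$ is Hölder conjugate to an algebraic action. Its entropies, computed via the conjugacy and Lemma \ref{gcontr}, match those of the model, so the Lyapunov exponents of $g$ with respect to any invariant measure are nonzero off $E^c_f$.

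For the center bundle, Proposition \ref{choiceg}(3), or Proposition \ref{prop:gtog_0generic}(3) in the generic case, gives subexponential growth of $Dg^n$ on $E^c_f$. The bound $e^{\pm\epsilon^2|n|}$ there is uniform.

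On $E^s_f$ and $E^u_f$, I need uniform exponential contraction of $Dg$ along $E^-_g$ and expansion along $E^+_g$. Lemma \ref{gcontr} only gives a metric Hölder contraction. To upgrade it, I would argue that every $g$-invariant ergodic measure has negative exponents on the $\mathcal W^-_g$ directions. A zero or positive exponent there would produce, by Pesin's theorem (Theorem \ref{pesin}), a curve in $\mathcal W^\mu_\#$ along which distances fail to contract exponentially, contradicting Lemma \ref{gcontr}.

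\textbf{Step 3: pass from exponents to uniform hyperbolicity.} The key tool is the standard criterion: a continuous invariant cone family, or a continuous subbundle on which all invariant measures have negative exponents, is uniformly contracted. Here the dominated splitting of $f$ supplies continuity, and $g$ commutes with $f$, so $g$ preserves $E^s_f$, $E^u_f$ and $E^c_f$. Within $E^s_f$ I still need a continuous $Dg$-invariant splitting $E^s_f=(E^s_f\cap E^-_g)\oplus(E^s_f\cap E^+_g)$. I would obtain it by using domination for the action $\langle f,g\rangle$: pick $a=(k,l)$ with $f^kg^l$ contracting exactly the $E^-_g$ directions of $E^s_f\oplus E^u_f$. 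Lemma \ref{gcontr} applies to $f^kg^l$ via Proposition \ref{choiceg}(2). The contracting set of $f^kg^l$ is then Pesin-uniform, hence a continuous bundle by the measure argument above.

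\textbf{Expected obstacle and conclusion.} The main obstacle is exactly this upgrade from Hölder, topological contraction to uniform derivative contraction, in particular excluding zero exponents on the $\mathcal W^\mu_\#$ directions. I expect to have to run the argument of \cite{Wang} measure by measure. Once $g$ is partially hyperbolic, its stable and unstable foliations are smooth-leaved, and intersecting them with those of $f$ and of other generic elements yields the Lyapunov foliations. Their images under $h$ are then smooth, as claimed.
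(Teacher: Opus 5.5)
There are two genuine gaps, both in the main (non-generic) case of the statement. The first is that the center of $g$ is not $E^c_f$. You invoke Proposition~\ref{choiceg}(3) for subexponential growth of $Dg$ on $E^c_f$, but that result only concerns $f^kg^l$ with $g$ in the center $Z(\CZ_0(f))$. An element $g$ with $g_0=h^{-1}gh$ generic is never central when $f_0$ is non-generic. Indeed, $\mathrm{Ad}(g_0)$ acts on $\mathfrak g_r$ (resp. $\mathfrak e_\nu$) with $r(f_0)=0$ (resp. $\nu(f_0)=0$) by $e^{r(g_0)}\neq 1$ (resp. $e^{\nu(g_0)}\neq 1$). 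So $g$ contracts and expands exponentially \emph{inside} $E^c_f$, and your splitting $E^c_f\oplus E^-_g\oplus E^+_g$ has the wrong shape. You have to split $E^c_f$ itself. The paper does this using the jointly smooth free action of $\CZ_0(f)$ on center leaves. It takes $\W^c_g(x)=\{g_1(x):g_1\in\CZ_0(f),\ g_1g=gg_1\}$, which has bounded distortion by freeness, and $\W^{c,s}_{f,g}(x)=\{g_1(x):g_1\in\CZ_0(f),\ g^ng_1g^{-n}\to\mathrm{id}\}$. Both are smooth-leaved, and their rates are read off from the group $\CZ_0(f)\simeq G^c$. For the same reason, Lemma~\ref{gcontr} is the wrong contraction input here. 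Its sign hypothesis can fail for non-central $g_0$, and a leaf of $\W^\mu_\#$ may contain both $g$-contracted and $g$-expanded directions. Since Proposition~\ref{prop:fholconj} is available, set $\W^{s,\#}_g=h(\W^s_{g_0})$ and $\W^{u,\#}_g=h(\W^u_{g_0})$. Using $h$ directly then gives $d(g^nx,g^ny)\le e^{\chi n}d(x,y)^{\theta_1^2}$, together with the characterization of these leaves as forward (resp. backward) asymptotic sets.

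The second gap is that you never produce a continuous $Dg$-invariant subbundle of $E^s_f\oplus E^u_f$ tangent to the topological foliation $\W^{s,\#}_g$. Your $E^\mu$, and hence the grouped $E^\pm_g$, are undefined, because $\W^\mu_\#$ is only a topological foliation. Picking $f^kg^l$ ``contracting exactly $E^-_g$'' and calling its contracting set Pesin-uniform, hence continuous, is circular: it presupposes partial hyperbolicity of $f^kg^l$. Oseledets spaces are only measurable. An entropy comparison does not control every invariant measure's exponents in prescribed directions. Pesin theory also gives no curve in a zero-exponent direction; the paper instead shows $\W^{s,\#}_g(x)$ equals the Pesin stable manifold at regular points, which fixes $\dim E^-_g$. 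The missing idea is to use non-stationary normal forms on the fast foliations $\W^i_f$ of $f$'s dominated splitting (narrow band spectrum). These give coordinates $H_x:\W^i_f(x)\to E^i_f(x)$ in which $g$, commuting with $f$, is linear: $Q_x=H_{g(x)}\circ g\circ H_x^{-1}$. The stable space $E^{i,s}_{f,g}(x)$ of $Q^{(n)}_x$ satisfies $H_x^{-1}(E^{i,s}_{f,g}(x))=\W^{s,\#}_g(x)\cap\W^i_f(x,\mathrm{loc})$. These topological pieces have constant dimension and $H_x$ varies continuously, so $E^{i,s}_{f,g}$ is a continuous invariant bundle. Only then does your Step 3 apply. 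A graph-transform argument bounds every exponent on $E^{i,s}_{f,g}$ by $\chi<0$, and subadditivity upgrades this to uniform contraction of $E^s_g=T\W^{c,s}_{f,g}\oplus\bigoplus_iE^{i,s}_{f,g}$.
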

The proof is very similar to the proof of Theorem 7.11 of \cite{Wang}. We will focus on the setting of Theorem \ref{cenrignongen:twisted}, since the case of a generic element of the twisted Weyl chamber flow is exactly the same as the proof of Theorem 7.11 of \cite{Wang}.

\begin{proof}
 For $g\in \CZ_0(f)$, let $g_0=h^{-1}gh$ be the corresponding generic element of the (twisted) Weyl chamber flow. We define the topological stable foliation of $g$ to be 
 $$\W^{s,\#}_g=h(\W^s_{g_0}).$$
 Similarly, we define $\W^{u,\#}_g=h(\W^u_{g_0})$ to be the topological unstable foliations of $g$.

 Let $\chi=\frac{\theta_1}{2} \cdot \chi(g_0)$ where $\theta_1<1$ is bi-H\"older exponent of $h$, and $\chi(g_0)<0$ is the contraction rate of $g_0$ in its stable bundle. Then by the H\"older conjugacy, we have for any $y\in \W^{s,\#}_g(x)$, \begin{equation}\label{equ:gcontract}
 d(g^n(x),g^n(y))\le e^{\chi n} d(x,y)^{\theta_1^2} 
 \end{equation} when $n\gg1$. 

 We take the center foliation of $g$ to be $$\W^c_g(x)=\{g_1(x):g_1\in \CZ_0(f), g_1\circ g=g\circ g_1\}$$ which is a smooth subfoliation of $\W^c_f$, since the action of $\CZ_0(f)$ on the center leaf is jointly smooth. Moreover, since \(h^{-1}gh=g_0\) and \(h^{-1}\CZ_0(f)h\subset \CZ_0(f_0)\), we have
\[
\W^c_g(x)
=
h\bigl(\W^c_{g_0}(h^{-1}(x))\bigr).
\]

Furthermore, $g$ acts ``almost isometrically" along its center. In fact, for any $y\in \W^c_g(x)$, then $y=g_1(x)$ for some $g_1\in \CZ_0(f)$ that commutes with $g$, so we have  $\min_{z\in X} d_{\W^c_f}(z,g_1(z))\le d_{\W^c_f}(g^n(x),g^n(y))=d_{\W^c_f}(g^n(x),g_1(g^n(x)))\le \max_{z\in X} d_{\W^c_f}(z,g_1(z)).$   Since the action by $g_1$ is free in the lifted center unless $g_1$ is the identity, we have for any $y\in \W^c_g(x)$, $y\neq x$, there exists $C>1$ such that for any $n\in \Z$, $$\frac{1}{C}<d_{\W^c_f}(g^n(x),g^n(y))<C.$$

The conjugacy also gives the following dynamical characterization of the topological stable, unstable foliation of $g$: 
 
 \begin{equation}\label{equ:dyncha}
 \begin{split}
 \W^{s,\#}_g(x)=\{y\in X: \limsup_{n\to \infty } \;d(g^n(x),g^n(y))=0\}\\
 \W^{u,\#}_g(x)=\{y\in X: \limsup_{n\to -\infty }\;d(g^n(x),g^n(y))=0\}
 \end{split}
 \end{equation}

Now we follow the same ideas as in \cite{Wang}. 
\textbf{Step 1:} We show that the foliation $\W^{s,\#}_g(x)$ agrees with the Pesin stable foliation at any $x\in X$ that is Pesin-regular for an ergodic measure $\nu$ of $g$. 

This is given by comparing Equations \ref{equ:gcontract} with the dynamical characterization of Pesin foliations and the bounded separation along \(\W^c_g\).

\textbf{Step 2:} We show the Lyapunov exponents along $\W^{s,\#}_g$ are $\le \chi$ for every $g$-ergodic measure.

This is given by the standard graph transformation argument in \cite[Theorem 3.16]{PuShergo}: if the Lyapunov exponent for some ergodic measure is $\beta > \chi$, then there exists a local disk embedded in the Pesin foliation, thus embedded in $\W^{s,\#}_g(x)$, that has dynamical growth with rate greater than $\chi$, which contradicts Equation \ref{equ:gcontract}.

\textbf{Step 3:} We use normal form theory to construct a continuous bundle $E^{i,s}_g$ which is contracted by $Dg$ inside the dominated splitting of $f$. 

We consider the dominated splitting of $f$, which has the same block size as the dominated splitting of $f_0$ (which is usually coarser than the splitting of the root spaces), $$TX=E^1_f\oplus E^2_f\oplus \cdots \oplus E^c_f\oplus \cdots \oplus E^t_f.$$ 

The non-central bundles $E^i_f, 1\le i\le t$ are continuous, and integrate to foliations $\W^i_f$ with smooth leaves. Moreover, $Df|_{E^i_f}$ satisfy the narrow band spectrum with critical regularity close to $1$. Therefore, by Theorem 2.4 of \cite{normalform} (see also Theorem 9 of \cite{DWX}), there exists normal-form coordinates $H_x:\W^i_f(x)\to E^i_f(x)$ such that $Q_x:=H_{g(x)}\circ g\circ H^{-1}_x$ is a linear map.

We denote $Q^{(n)}_x=H_{g^n(x)}\circ g^n\circ H^{-1}_x$, and define the subbundle $$E^{i,s}_{f,g}(x)=\{v\in E^i_f(x): \lim_{n\to \infty} \|Q^{(n)}_x(v)\|=0\}.$$ Then by the dynamical characterization of the foliations, we have $$H_x^{-1}(E^{i,s}_{f,g}(x))=\W^{s,\#}_g(x)\cap \W^i_f(x,loc).$$ 
Since $\W^{s,\#}_g$ has continuous leaves, $\W^{s,\#}_g(x)\cap \W^i_f(x,loc)$ has constant topological dimension, and $H_x$ depends $C^r$ continuously on $x$, this shows that $E^{i,s}_{f,g}(x)$ is a continuous $Dg$-invariant bundle. 

\textbf{Step 4:} By Step 2, the Lyapunov exponent of $Dg|_{E^{i,s}_{f,g}}$ is uniformly bounded by $\chi<0$ for any ergodic measure $\nu$ of $g$. Therefore, the classical estimates on subadditive sequences (see \cite{SCHREIBER1998334}, see also Lemma 10 of \cite{DWX}) show that for any $\epsilon \ll 1$, there exists $N > 1$ such that
$$\|Dg^n \mid_{E^{i,s}_{f,g}}\|\le e^{(\chi+\epsilon) n}$$
for any $n \ge N$ and $x \in X$. 

We also take stable foliations of $g$ inside the center foliation of $f$, given by $\W^{c,s}_{f,g}(x)=\{g_1(x): g_1\in \CZ_0(f), \limsup_{n\to \infty }d(g^ng_1g^{-n},id)=0\}$. By construction, this is a foliation with $C^\infty$-smooth leaves, and we have $\W^{c,s}_{f,g}=h(\W^c_{f_0}\cap \W^s_{g_0})$ by the conjugacy. Furthermore, by the group structure of $\CZ_0(f)\simeq \CZ_0(f_0)$, we have for any $y\in \W^{c,s}_{f,g}$, $d(g^n(x),g^n(y))\le e^{\chi n}$ for $n\gg 1$. Therefore, by the same argument, there exists a uniform $N>1$, such that $\|Dg^N|_{T\W^{c,s}_{f,g}(x)}\|\le e^{(\chi+\epsilon)N}$  for any $x\in X$

We take $E^s_g=T\W^{c,s}_{f,g}\oplus(\oplus_{i=1}^t E^{i,s}_{f,g})$. Then by our earlier steps $E^s_g$ is tangent to $\W^{s,\#}_g(x)$ at every $x\in X$, and $\dim E^s_g$ is equal to the topological dimension of leaves of $\W^{s,\#}_g$, and $Dg$ has uniform contraction along $E^s_g$, i.e., for any $n>N$ and $x\in X$, $$\|Dg^n|_{E^s_g(x)}\|\le e^{(\chi+\epsilon)n}.$$

The same argument can be applied to $g^{-1}$ to construct a continuous bundle $E^u_g$ and prove uniform expansion along $E^u_g$. 

In the center bundle $E^c_g=T\W^c_g$, by the ``almost isometric" property, the same argument as Step 2 shows that the Lyapunov exponent of $Dg|_{E^c_g}$ is zero for any $g$-invariant measure $\nu$. So, the classical estimates on subadditive sequences, applied to $Dg|_{E^c_g}$ and $Dg^{-1}|_{E^c_g}$ shows that for any $\epsilon>0$, there exists $N>1$, such that \[
\|Dg^n|_{E^c_g(x)}\|\le e^{\epsilon |n|},
\qquad
\|(Dg^n|_{E^c_g(x)})^{-1}\|\le e^{\epsilon |n|}
\] for any $x\in X$ and $|n|>N$.

Combining these with the fact that the bundles $E^s_g,E^c_g, E^u_g$ are $g$-invariant, and the dimensions add up to the ambient dimension, this shows that $g$ is partially hyperbolic.
\end{proof}

\bibliography{cenrig}{}
\bibliographystyle{acm}

\end{document}